\newcommand\m{\mathfrak{m}}
\newcommand\frakb{\mathfrak{b}}
\newcommand\fraku{\mathfrak{u}}
\newcommand\frakt{\mathfrak{t}}
\newcommand\frakg{\mathfrak{g}}
\newcommand\A{\mathcal{A}}
\newcommand\OO{\mathcal{O}}
\newcommand\CC{\mathbb{C}}
\newcommand\T{\mathcal{T}}
\newcommand\G{\mathcal{G}}
\newcommand\B{\mathcal{B}}
\newcommand\Bbar{\overline{\B}}
\newcommand\Gbar{\overline{\G}}
\newcommand\U{\mathcal{U}}
\newcommand\V{\mathcal{V}}
\newcommand\F{\mathcal{F}}
\newcommand\ZZ{\mathbb{Z}}
\newcommand\PP{\mathbb{P}}
\newcommand\FF{\mathbb{F}}
\newcommand\cbar{\overline{c}}
\DeclareMathOperator{\Spec}{Spec}
\DeclareMathOperator{\relSpec}{\bf{Spec}}
\DeclareMathOperator{\Pic}{Pic}
\DeclareMathOperator{\ad}{ad}
\DeclareMathOperator{\GL}{GL}
\DeclareMathOperator{\PGL}{PGL}
\DeclareMathOperator{\Aut}{Aut}
\DeclareMathOperator{\Isom}{Isom}
\DeclareMathOperator{\Ext}{Ext}
\DeclareMathOperator{\Hom}{Hom}
\DeclareMathOperator{\End}{End}
\DeclareMathOperator{\Char}{char}
\DeclareMathOperator{\Frob}{Frob}
\newcommand{\tS}{{\widetilde S}}
\newcommand{\Ssing}{S^{\mathrm{sing}}}
\newcommand{\OOhat}{{\widehat \OO}}
\renewcommand{\le}{\leqslant}
\renewcommand{\ge}{\geqslant}
\newcommand\GG{\mathbb{G}}
\newcommand\GaR{\GG_{\mathrm{a},R}}
\newcommand\Gm{\GG_\mathrm{m}}
\newcommand\fbar{\overline{f}}
\newcommand\II{\mathcal{I}}
\newcommand\Zred{Z^{\mathrm{red}}}
\newcommand\id{\mathrm{id}}
\newcommand\pr{\mathrm{pr}}
\newtheorem{theorem}{Theorem}[section]
\newtheorem*{thm}{Theorem}
\newtheorem{lemma}[theorem]{Lemma}
\newtheorem{proposition}[theorem]{Proposition}
\newtheorem{cor}[theorem]{Corollary}
\theoremstyle{definition}
\newtheorem{remark}[theorem]{Remark}
\numberwithin{equation}{section}
\begin{document}

\title[Degeneration of torsors over families of del Pezzo surfaces]
{Degeneration of torsors\\ over families of del Pezzo surfaces}

\author{Ulrich Derenthal}

\address{Institut f\"ur Algebra, Zahlentheorie und Diskrete
  Mathematik, Leibniz Universit\"at Hannover,
  Welfengarten 1, 30167 Hannover, Germany}
 
\email{derenthal@math.uni-hannover.de}

\author{Norbert Hoffmann}

\address{Department of Mathematics and Computer Studies, Mary Immaculate College, South Circular Road, 
Limerick, Ireland}

\email{norbert.hoffmann@mic.ul.ie}

\date{September 17, 2020}

\subjclass[2010]{14J26 (14D06, 14L30, 11E57)}

\begin{abstract}
  Let $S$ be a split family of del Pezzo surfaces over a discrete
  valuation ring such that the general fiber is smooth and the special
  fiber has $\mathbf{ADE}$-singularities. Let $G$ be the reductive
  group given by the root system of these singularities.  We construct
  a $G$-torsor over $S$ whose restriction to the generic fiber is the
  extension of structure group of the universal torsor.  This extends
  a construction of Friedman and Morgan for individual singular del
  Pezzo surfaces. In case of very good residue characteristic, this torsor
  is unique and infinitesimally rigid.
\end{abstract}

\maketitle

\section{Introduction}

Cubic surfaces over $\CC$ have been studied since the 19th century by
Cayley, Clebsch, Schl\"afli, Segre, Manin, and many others. In
particular, the $27$ lines on smooth cubic surfaces have an
interesting combinatorial structure: their symmetry group is the Weyl
group of type $\mathbf E_6$. Schl\"afli classified cubic
surfaces with $\mathbf{ADE}$-singularities: the worst has type $\mathbf E_6$.
In this paper, we explore a geometric connection between smooth cubic
surfaces, singular cubic surfaces, and an algebraic group of type
$\mathbf E_6$.

More generally, the lines on a split smooth del
Pezzo surface $S$ of degree $d=1,2,3,4,5,6,7$ have a symmetry group that
is a Weyl group of type $\mathbf{E}_8, \mathbf{E}_7, \mathbf{E}_6,
\mathbf{D}_5, \mathbf{A}_4, \mathbf{A}_2+\mathbf{A}_1, \mathbf{A}_1$,
respectively. The underlying root system
\begin{equation*}
  \Phi_0 = \{\alpha \in \Lambda \mid (\alpha,\alpha)=-2,\ (\alpha,-K_S)=0\}
\end{equation*}
is the set of $(-2)$-classes in the Picard group $\Lambda = \Pic(S)$, where
$-K_S$ is the anticanonical class \cite{MR833513}.

If $S$ is a del Pezzo surface with $\mathbf{ADE}$-singularities, then
its minimal desingularization $\tS$ is a weak del Pezzo surface
\cite{MR579026}, \cite[\S 8]{MR2964027}. If $\tS$ is split of degree
$d \le 7$, then the $(-2)$-classes in $\Lambda = \Pic(\tS)$ again form
a root system $\Phi_0$ of the same type as above. It contains a
subsystem $\Phi$ corresponding to the singularities of $S$, whose set
$\Delta$ of simple roots consists of the $(-2)$-curves on $\tS$.

These combinatorial data also correspond to algebraic groups.  Let
$T:=\Hom(\Lambda,\Gm)$ be the torus with character group $\Lambda$.
Let $G \subset G_0$ be the split reductive groups with maximal torus
$T$ and root systems $\Phi \subset \Phi_0$, respectively.  Let
$B \subset G$ be the Borel subgroup containing $T$ given by
$\Delta$. Choose a Borel subgroup $B_0 \subset G_0$ containing $B$.

A fundamental tool in the arithmetic study of weak del Pezzo surfaces
$\tS$ are the universal torsors introduced by Colliot-Th\'el\`ene and
Sansuc \cite{MR54:2657,MR0447246,MR0447250,MR89f:11082}. These are
certain $T$-torsors over $\tS$.
They have been used, for example, to
study the Hasse principle and weak approximation (e.g., in
\cite{MR870307,MR876222}) and the Manin conjecture (e.g., in
\cite{MR1909606,MR2874644}) for certain $\tS$.

However, universal torsors $\T$ over $\tS$ never descend to $S$. This
observation combined with physical considerations led Friedman and
Morgan \cite{MR1941576} to a geometric connection between singular del
Pezzo surfaces and algebraic groups: They show over $\CC$ that it is
possible to lift $\T$ (along the canonical projection
$B_0 \twoheadrightarrow T$) to a $B_0$-torsor over $\tS$ such that the
induced $G_0$-torsor descends to $S$
\cite[Theorem~3.1]{MR1941576}. Their construction is based on their
work, partly with Witten, on principal bundles over elliptic curves
\cite{MR1468319, MR1618343, arXiv:9811130, arXiv:math/0006174,
  arXiv:math/0108104}.

A different geometric connection between smooth del Pezzo surfaces and
algebraic groups was conjectured by Batyrev and proved in
\cite{popov_diplom, MR2332612, MR2368955, MR2976314}: Every universal
torsor $\T$ over a split smooth del Pezzo surface $S$ of degree
$\le 5$ has a natural $T$-equivariant embedding into the affine cone
over a flag variety $G_0/P_0$, where $P_0 \subset G_0$ is a certain
maximal parabolic subgroup.

Singular del Pezzo surfaces appear naturally as degenerations of
smooth del Pezzo surfaces. For modern accounts of such degenerations,
see Corti \cite{MR1426888} and Hacking--Keel--Tevelev
\cite{MR2534095}.
We consider flat families $S$ of split del Pezzo surfaces of arbitrary
degree over a discrete valuation ring $R$ with residue field $k$ such
that the generic fiber of $S$ is smooth and the special fiber of $S$
has at most $\mathbf{ADE}$-singularities. In
Section~\ref{sec:del_pezzo}, we describe the precise setup and discuss
the geometry in more detail. In particular, we have a
desingularization $\tS \to S$ that is minimal in the special fibers
and an isomorphism in the generic fibers.

Our main result, proved in Section~\ref{sec:groups}, provides a
geometric connection between smooth del Pezzo surfaces, singular del
Pezzo surfaces, and algebraic groups:

\begin{thm}
  Every universal torsor $\T$ over $\tS$ can be lifted to a $B$-torsor
  $\B$ over $\tS$ such that the induced $G$-torsor $\G$ descends to a
  $G$-torsor $\G'$ over $S$. If $k$ has very good characteristic for the
  root system $\Phi$, then $\B$, $\G$, and $\G'$ are all unique up to
  isomorphisms, and infinitesimally rigid.
\end{thm}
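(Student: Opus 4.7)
The plan is to reduce every assertion in the theorem to vanishing of cohomology of line bundles $\OO_\tS(\alpha)$, $\alpha \in \Phi$, on the family $\tS \to \Spec R$. The structural input is the decomposition $B = T \ltimes U$ with unipotent radical $U$, together with a filtration $U = U_1 \supset U_2 \supset \cdots \supset U_{N+1} = 1$ by normal subgroups whose successive quotients are the root subgroups $U_{\alpha_i} \cong \GaR$ for $\alpha_i \in \Phi^+$ (taken in order of increasing height). Under the identification $\Hom(T,\Gm) = \Lambda = \Pic(\tS)$, twisting by the universal torsor $\T$ turns each such graded piece into the additive sheaf underlying the line bundle $\OO_\tS(\alpha_i)$.

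For \emph{existence} of $\B$, I would lift $\T$ successively through the surjections $B/U_{i+1} \twoheadrightarrow B/U_i$. The obstruction at the $i$-th step lies in $H^2(\tS, \OO_\tS(\alpha_i))$, and once nonempty the set of lifts forms a torsor under $H^1(\tS, \OO_\tS(\alpha_i))$. Each positive root $\alpha \in \Phi^+$ is an effective integral combination of the $(-2)$-curves above the special fiber $S_k$ and is orthogonal to $-K_\tS$; combining Riemann--Roch, Serre duality, and the fact that $-K_\tS$ is relatively big and nef should yield $H^2(\tS_k, \OO_{\tS_k}(\alpha)) = 0$ fiberwise and, via cohomology and base change, $H^2(\tS, \OO_\tS(\alpha)) = 0$ on the total family. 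This produces some $B$-torsor $\B$ lifting $\T$, not a priori unique.

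For the \emph{descent} of $\G = \B \times^B G$ to $S$, the key observation --- due to Friedman and Morgan over $\CC$ --- is that the lift $\B$ can be chosen so that $\G$ trivializes along each exceptional fiber of $\pi : \tS \to S$. Since $\pi$ is an isomorphism over $S \setminus \Ssing$, $\G|_{\tS \setminus \pi^{-1}(\Ssing)}$ descends there tautologically; on the henselization of $S$ at each $s \in \Ssing$, $\G$ trivializes by the observation above. Gluing these two pieces via fpqc descent, with the rational-singularity identity $R^1 \pi_* \OO_\tS = 0$ ensuring compatibility of automorphism sheaves along the overlap, produces the desired $\G'$ on $S$ with $\pi^* \G' \cong \G$.

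Finally, for \emph{uniqueness and infinitesimal rigidity} in very good characteristic, the hypothesis yields the Chevalley decomposition $\Lie(G) = \Lie(T) \oplus \bigoplus_{\alpha \in \Phi} \frakg_\alpha$ with one-dimensional root spaces and the expected bracket behaviour, so the automorphism and first-order deformation groups of $\B$, $\G$ and $\G'$ reduce, via the analogous filtration argument, to $H^0$ and $H^1$ of the line bundles $\OO_\tS(\alpha)$ for \emph{all} roots $\alpha \in \Phi$. The main technical obstacle I anticipate is controlling these groups for negative roots: these classes are anti-effective and supported on the exceptional locus, so one needs a careful analysis using the $\mathbf{ADE}$-configuration of $(-2)$-curves and adjunction/Serre duality to kill their cohomology. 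Pushing the resulting vanishings down to $S$ through the Leray spectral sequence for $\pi$, combined with the descent constructed above, should then upgrade rigidity of $\G$ to rigidity of $\G'$.
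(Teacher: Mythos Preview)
Your existence and descent sketches are broadly on the right track, but the uniqueness and rigidity argument has a genuine gap. You plan to reduce $H^1(\tS,\ad(\B))=0$ to vanishing of $H^1(\tS,\OO_\tS(\alpha))$ for all $\alpha\in\Phi$, and you flag the negative roots as the hard case. In fact it is the other way around: for $\alpha\in\Phi^-$ one has $H^0=H^1=0$ easily (the class is effective on neither fiber), whereas for each $\alpha\in\Phi^+$ the $R$-module $H^1(\tS,\OO_\tS(\alpha))$ is \emph{nonzero} --- it is cyclic and torsion, isomorphic to $R/\m^{n_\alpha}$ for some $n_\alpha\ge 1$, because $\alpha$ is effective on $\tS_k$ but not on $\tS_K$. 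So the naive filtration argument cannot yield rigidity: every positive-root graded piece contributes nontrivially to $H^1$.

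The actual mechanism is that the connecting homomorphisms in the long exact sequences for the filtration of $\ad(\B)$ are surjective and thus absorb these contributions. At the bottom stage this amounts to invertibility of the Cartan matrix over $k$; at higher stages it is a cup-product surjectivity $\bigoplus_{\beta}H^0(L_{\beta,k})\to\bigoplus_{\gamma}H^1(L_{\gamma,k})$ verified type by type (Lemma~\ref{lem:cup_epi}), and \emph{this} is where the very-good-characteristic hypothesis is genuinely used --- not in any root-space decomposition of $\frakg$, which holds over any base for a split reductive group. The same surjectivity drives uniqueness: lifts of $\B_{\le n-1}$ to $\B_{\le n}$ form a nontrivial torsor under $\bigoplus_\gamma H^1(L_\gamma)$, and one shows that $\Aut(\B_{\le n-1})$ acts transitively on it. A related point you gloss over in the descent step: for $\G$ to trivialize on a $(-2)$-curve $D$ of class $\alpha$, the first-stage class $c_\alpha\in H^1(\tS,L_\alpha)$ must be chosen to \emph{generate} this cyclic module, so that its restriction to $D$ is the nonsplit extension $0\to\OO_{\PP^1}(-1)\to E\to\OO_{\PP^1}(1)\to 0$ and hence $E\cong\OO_{\PP^1}^2$; an arbitrary lift of $\T$ will not descend. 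Since a generator is determined only up to a unit of $R$, uniqueness of $\B$ holds only up to an automorphism of $G$ fixing $T$ pointwise.
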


As $G \subset G_0$, our $G$-torsor $\G'$ induces a $G_0$-torsor
over $S$. Since every individual singular del Pezzo surface
over $\CC$ can be extended to such a degenerating family of del Pezzo
surfaces over $R = \CC[[t]]$, our result extends that of Friedman and
Morgan. On the other hand, we see no direct relation to the work
\cite{popov_diplom, MR2332612, MR2368955, MR2976314} on Batyrev's conjecture.

We view the uniqueness as evidence that the $G$-torsor $\G'$ is naturally
associated with the family $S$. See (\ref{eq:good_primes}) for the notion of
very good characteristic, and Proposition \ref{prop:uniqueness} for the precise
uniqueness statement. The torsor $\G'$ is called \emph{infinitesimally rigid}
if $H^1(S,\ad(\G'))=0$ for its adjoint vector bundle $\ad(\G')$, and similarly
for the other torsors.  In Section~\ref{sec:d4}, we give an example of a
family of cubic surfaces with a $\mathbf{D}_4$ singularity over a residue
field $k$ of characteristic $2$ for which $\G'$ is not infinitesimally rigid.

The work of Friedman and Morgan is generalized to other rational
surfaces over $\CC$ with $\mathbf{ADE}$-singularities in
\cite{MR3244919}.  For a physically motivated related construction
over families of such surfaces with an emphasis on the case
$\mathbf{A}_n$, using a Fourier-Mukai transform, see \cite{arXiv:1510.05025}.

\subsection*{Acknowledgments}

We thank Yuri Tschinkel for introducing us to these questions. The
first author was supported by grant DE 1646/3-1 of the Deutsche
Forschungsgemeinschaft. The second author was supported by Mary
Immaculate College Limerick through the PLOA sabbatical program, and
by the Riemann Center for Geometry and Physics of Leibniz
Universit\"at Hannover.

\section{Degenerating del Pezzo surfaces}\label{sec:del_pezzo}

Let $R$ be a discrete valuation ring with quotient field $K$, maximal
ideal $\m \subset R$ and residue field $k = R/\m$. Recall that every
split smooth del Pezzo surface has degree $d \in \{1, \dots, 9\}$, and
is
\begin{enumerate}
\item[(i)] either a blow-up of $\PP^2$ in $9-d$ points $x_1, \dots,
  x_{9-d} \in \PP^2(K)$ in \emph{general position} (i.e., no three on a line,
  no six on a conic, and no eight on a cubic with one of them on a
  singularity),
\item[(ii)] or $\PP^1\times\PP^1$ for $d=8$.
\end{enumerate}

As a degeneration of case (i), we consider the chain of blow-ups
\begin{equation*}
  \tS = \tS_{9-d} \xrightarrow{p_{9-d}} \tS_{8-d} 
  \xrightarrow{\phantom{p_2}} \dots \xrightarrow{\phantom{p_2}}
  \tS_2 \xrightarrow{p_2} \tS_1 \xrightarrow{p_1} \tS_0 = \PP^2_R
\end{equation*}
where $p_i : \tS_i \to \tS_{i-1}$ is the blow-up in the closure $\bar
x_i \in \tS_{i-1}(R)$ of the preimage of $x_i$ in $\tS_{i-1}(K)$.  The
generic fiber $\tS_K$ is the blow-up of $\PP^2_K$ in $x_1, \dots,
x_{9-d}$, and therefore a del Pezzo surface of degree $d$ over $K$.

Here, we assume that the images of $\bar x_i$ in $\tS_{i-1}(k)$ are in
\emph{almost general position}, by which we mean that the image of
$\bar x_i$ does not lie on a $(-2)$-curve in $\tS_{i-1,k}$. 

As degenerations of case (ii), we consider $\PP^1$-bundles
\begin{equation*}
  \tS \to \PP^1_R
\end{equation*}
whose restriction to the generic fiber $\PP^1_K$ is the trivial bundle
\begin{equation*}
  \PP^1_K \times_K \PP^1_K = \PP(\OO_{\PP^1_K} \oplus \OO_{\PP^1_K}) \to \PP^1_K
\end{equation*}
and whose
restriction to the special fiber is either trivial or the Hirzebruch
surface
\begin{equation*}
  \FF_2 = \PP(\OO_{\PP^1_k}(-1) \oplus \OO_{\PP^1_k}(1)) \to \PP^1_k.
\end{equation*}

In both cases, the special fiber $\tS_k$ is a \emph{weak del Pezzo
surface} over $k$ \cite{MR579026}, \cite[\S 8]{MR2964027}, i.e., a
smooth rational surface whose anticanonical class is nef and big. In
fact, every split weak del Pezzo surface appears as such a blow-up of
$\PP^2$ or such a $\PP^1$-bundle over $\PP^1$.

\begin{lemma}\label{lem:canonical_sheaf}
  The canonical bundle $\omega_{\tS_k}$ of the special fiber $\tS_k$
  is isomorphic to the restriction of the canonical bundle $\omega_\tS$
  of the total space $\tS$. 
\end{lemma}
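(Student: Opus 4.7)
The plan is to reduce the statement to smoothness of $\tS$ over $R$ together with the base-change compatibility of Kähler differentials. Here $\omega_\tS$ is naturally interpreted as the relative canonical bundle $\det\Omega^1_{\tS/R}$; this agrees with any reasonable notion of absolute canonical bundle on $\tS$, because $\Spec R$ carries a trivial canonical bundle. Once $\tS$ is known to be smooth over $R$ of relative dimension~$2$, base change along the closed immersion $\Spec k \hookrightarrow \Spec R$ gives $\Omega^1_{\tS/R}|_{\tS_k} \cong \Omega^1_{\tS_k/k}$, and taking top exterior powers yields the desired isomorphism. An equivalent route uses adjunction: the special fiber is the principal Cartier divisor $\tS_k = V(\pi)$ cut out by a uniformizer $\pi \in R$, so its normal bundle in $\tS$ is trivial and adjunction collapses directly to $\omega_{\tS_k} \cong \omega_\tS|_{\tS_k}$.

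The real task is therefore to verify that $\tS$ is smooth over $R$. In the $\PP^1$-bundle case (ii) this is immediate. In the blow-up case (i), I would argue inductively along the chain
\begin{equation*}
  \tS = \tS_{9-d} \to \tS_{8-d} \to \cdots \to \tS_1 \to \tS_0 = \PP^2_R,
\end{equation*}
using two standard facts: a section $\bar x_i \in \tS_{i-1}(R)$ of a smooth $R$-scheme of relative dimension~$2$ is automatically a regularly embedded closed subscheme of codimension~$2$ that is itself smooth over $R$; and the blow-up of a smooth $R$-scheme along such a subscheme is again smooth over $R$, with exceptional divisor a projective bundle over the center. Crucially, the almost-general-position hypothesis plays no role in this argument; it enters only in guaranteeing that the special fiber $\tS_k$ is a weak del Pezzo surface, which is not needed for the present lemma.

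I do not expect a serious obstacle: once smoothness of $\tS$ over $R$ has been recorded, the rest of the argument is formal. The only subtlety to keep in mind is that the isomorphism of relative differentials genuinely requires the structure morphism $\tS \to \Spec R$ to be smooth, not merely that $\tS$ and $\tS_k$ be regular individually; that is precisely the content of the inductive blow-up verification, and is the main point where the geometric description of $\tS$ is used.
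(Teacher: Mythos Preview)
Your proposal is correct, and your ``equivalent route'' via adjunction is exactly the paper's proof: the paper simply observes that $\omega_{\tS_k}$ and $\omega_\tS|_{\tS_k}$ differ by the normal bundle of $\tS_k$ in $\tS$, which is the pullback of the normal bundle of $\Spec k$ in $\Spec R$ and hence trivial. The paper does not spell out smoothness of $\tS$ over $R$ (it is implicit in the construction), nor does it go through the base-change-of-differentials argument you give as your primary approach; that alternative is fine and arguably makes clearer why the relative interpretation of $\omega_\tS$ is the right one, but it requires the extra verification of smoothness that you carry out, whereas the adjunction argument only needs $\tS_k \subset \tS$ to be a principal Cartier divisor in a regular scheme.
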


\begin{proof}
  The two differ by the normal bundle of $\tS_k$ in $\tS$, which is
  the pullback of the normal bundle of $\Spec( k)$ in $\Spec( R)$,
  and therefore trivial.
\end{proof}

\begin{lemma}\label{lem:base_change}
  The $R$-module $H^0(\tS, \omega_\tS^{-m})$ is free, and the natural map   
  \begin{equation*}
    H^0(\tS, \omega_\tS^{-m}) \otimes_R k \to H^0(\tS_k, \omega_{\tS_k}^{-m})
  \end{equation*}
  is an isomorphism, for each integer $m \ge 0$.
\end{lemma}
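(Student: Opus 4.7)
The plan is to apply cohomology and base change to the proper morphism $\pi : \tS \to \Spec(R)$ and the locally free, hence $R$-flat, coherent sheaf $\omega_\tS^{-m}$. By Lemma~\ref{lem:canonical_sheaf} its restriction to the special fiber is exactly $\omega_{\tS_k}^{-m}$, so the key input I need is the vanishing $H^i(\tS_k, \omega_{\tS_k}^{-m}) = 0$ for every $i \ge 1$ and every $m \ge 0$.

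For this vanishing I would use that $\tS_k$ is a split weak del Pezzo surface, so $-K_{\tS_k}$ is nef and big with $K_{\tS_k}^2 = d > 0$. Serre duality identifies $H^2(\tS_k, \omega_{\tS_k}^{-m})$ with $H^0(\tS_k, \omega_{\tS_k}^{m+1})^\vee$; since $(m+1)K_{\tS_k} \cdot (-K_{\tS_k}) = -(m+1)d < 0$ while $-K_{\tS_k}$ is nef, the class $(m+1)K_{\tS_k}$ is not effective and this $H^0$ vanishes. For $H^1$ I would write $-mK_{\tS_k} = K_{\tS_k} + \bigl(-(m+1)K_{\tS_k}\bigr)$ and apply Kawamata--Viehweg vanishing on the smooth projective surface $\tS_k$ to the nef and big divisor $-(m+1)K_{\tS_k}$. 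If one wishes to avoid Kawamata--Viehweg in positive residue characteristic, one can instead push forward step by step along the iterated blow-up of Section~\ref{sec:del_pezzo}, respectively the $\PP^1$-bundle structure, reducing the $H^1$-vanishing to the trivial vanishing on $\PP^2_k$, $\PP^1_k \times_k \PP^1_k$, or $\FF_2$.

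Once this higher cohomology vanishing on the special fiber is in hand, the standard cohomology and base change theorem (for example Hartshorne III.12.11, Mumford, or EGA~III) gives $R^i \pi_* \omega_\tS^{-m} = 0$ for $i \ge 1$ in a neighborhood of the closed point of $\Spec(R)$, hence on all of $\Spec(R)$ since $R$ is local, and shows that $\pi_* \omega_\tS^{-m}$ is locally free of rank $h^0(\tS_k, \omega_{\tS_k}^{-m})$ with base change isomorphism onto $H^0(\tS_k, \omega_{\tS_k}^{-m})$. A finitely generated locally free module over the local ring $R$ is free, so $H^0(\tS, \omega_\tS^{-m})$ is a free $R$-module and the natural map of the lemma is an isomorphism. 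The only real obstacle is the special-fiber vanishing in small residue characteristic, but this is classical for weak del Pezzo surfaces in all characteristics and in any case admits the elementary blow-up reduction above.
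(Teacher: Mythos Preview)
Your overall strategy---prove higher-cohomology vanishing on the special fiber and then invoke cohomology and base change---is exactly what the paper does, and your deduction of freeness and the base-change isomorphism from that vanishing is correct. The only substantive divergence is in how you justify $H^1(\tS_k,\omega_{\tS_k}^{-m})=0$.

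The paper obtains this vanishing by restricting to a general anticanonical divisor $D\subset\tS_k$: since $D$ has trivial dualizing sheaf, Serre duality on $D$ gives $H^1(D,\omega_{\tS_k}^{-m}|_D)=0$ for $m\ge 1$, and then an induction on $m$ via the exact sequence $0\to\omega_{\tS_k}^{-(m-1)}\to\omega_{\tS_k}^{-m}\to\omega_{\tS_k}^{-m}|_D\to 0$ yields the claim. This argument is uniform in all characteristics.

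Your Kawamata--Viehweg argument is clean in characteristic zero but is not available as stated when $\Char k>0$; the paper explicitly allows residue characteristic $2$, $3$, or $5$. Your proposed fallback of pushing forward along the blow-ups is also not as direct as you suggest: for a single blow-up $p:X\to Y$ at a point $y$ with exceptional curve $E$ one has $p_*\omega_X^{-m}\cong\omega_Y^{-m}\otimes\II_y^{\,m}$, not $\omega_Y^{-m}$, so $H^1(X,\omega_X^{-m})$ does not reduce to the same invariant on $Y$. One can still make an induction work---for instance by peeling off one copy of $\OO(-E)$ at a time and checking that each restriction map to $E\cong\PP^1$ is surjective on $H^0$---but this requires a double induction and a separate surjectivity argument at every step, which is more than your sketch supplies. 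The anticanonical-curve method in the paper is both shorter and characteristic-free, and is the standard way this vanishing is established for weak del Pezzo surfaces.
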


\begin{proof}
  We carry some arguments from \cite[\S III.3]{MR1440180} over
  to the weak del Pezzo surface $\tS_k$. We have
  \begin{equation*}
    H^1(\tS_k,\OO_{\tS_k}) = 0
  \end{equation*}
  since this is a birational invariant \cite[Proposition~V.3.4]{MR0463157}.
  Let $D$ be a general member of the anticanonical linear system on $\tS_k$.
  Then $D$ does not contain any $(-2)$-curve on $\tS_k$, since
  $\omega_{\tS_k}^{-1}$ is globally generated. Therefore,
  \begin{equation*}
    H^0(D, \omega_{\tS_k}^m \otimes \OO_D) = 0
  \end{equation*}
  for $m \ge 1$. Being a local complete intersection, $D$ has dualizing sheaf
  \begin{equation*}
    \omega_D = \det( I_{D \subset \tS_k}/I_{D \subset \tS_k}^2)^{\vee} \otimes \omega_{\tS_k}
    \cong \OO_{\tS_k}(-D)^{\vee}_{|D} \otimes \OO_{\tS_k}(-D) = \OO_D
  \end{equation*}
  according to \cite[Definition~6.4.7]{MR1917232}. Therefore, Serre duality on $D$ implies
  \begin{equation*}
    H^1(D, \omega_{\tS_k}^{-m} \otimes \OO_D) = 0
  \end{equation*}
  for $m \ge 1$. By means of the exact sequence
  \begin{equation*}
    H^1(\tS_k, \omega_{\tS_k}^{-(m-1)}) \to H^1(\tS_k, \omega_{\tS_k}^{-m})
      \to H^1(D, \omega_{\tS_k}^{-m} \otimes \OO_D) = 0,
  \end{equation*}
  and induction over $m$, we conclude that
  \begin{equation*}
    H^1(\tS_k, \omega_{\tS_k}^{-m}) = 0
  \end{equation*}
  for $m \ge 0$. Using Cohomology and Base Change \cite[Theorem~III.12.11]{MR0463157} 
  together with Lemma~\ref{lem:canonical_sheaf}, the claim follows.
\end{proof}

Choosing a sufficiently large integer $m$ and a basis of $H^0(\tS, \omega_\tS^{-m})$,
we get an anticanonical map
\begin{equation*}
  \phi: \tS \twoheadrightarrow S \subset \PP^N_R.
\end{equation*}
Up to isomorphism over $R$, the scheme $S$ does not depend in the choices made.
As $S$ is integral and $R$ is a discrete valuation ring, $S$ is flat over $R$
by \cite[Proposition~III.9.7]{MR0463157}. Lemma~\ref{lem:base_change}
implies that the special fiber $S_k$ of $S$ is the anticanonical image of
the weak del Pezzo surface $\tS_k$.

In particular, $S_k$ is a del Pezzo surface with at most
$\mathbf{ADE}$-singularities, and $\phi$ contracts precisely the
$(-2)$-curves on $\tS_k$.

\begin{proposition}\label{prop:O_tS_O_S}
  We have $\phi_* \OO_{\tS} = \OO_S$, and $R^i\phi_* \OO_\tS = 0$ for all $i > 0$.
\end{proposition}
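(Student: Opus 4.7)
The plan is to deduce the statement on $S$ from the corresponding vanishing on the special fiber, transferred to the total space via the $t$-adic exact sequence on $\tS$. Let $t\in R$ be a uniformizer, and write $\iota\colon\tS_k\hookrightarrow\tS$ and $\iota_S\colon S_k\hookrightarrow S$ for the closed immersions.

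The first (and main) step is the special-fiber version: $\phi_{k*}\OO_{\tS_k}=\OO_{S_k}$ and $R^i\phi_{k*}\OO_{\tS_k}=0$ for $i\ge 1$. Since $S_k$ is a del Pezzo surface with $\mathbf{ADE}$-singularities and $\phi_k$ contracts precisely the $(-2)$-curves on $\tS_k$, those singularities are rational double points, so the claim follows from Artin's rationality criterion applied to the negative-definite intersection matrix of the $(-2)$-curves. I expect this to be the only nontrivial input and the main obstacle; it is valid in any characteristic.

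Next, I would apply $R\phi_*$ to the short exact sequence
\begin{equation*}
  0 \to \OO_\tS \xrightarrow{\cdot t} \OO_\tS \to \iota_*\OO_{\tS_k} \to 0
\end{equation*}
coming from $R$-flatness of $\tS$. Because $\phi\circ\iota=\iota_S\circ\phi_k$ and both $\iota,\iota_S$ are closed (hence affine) immersions, one obtains $R^i\phi_*(\iota_*\OO_{\tS_k})=\iota_{S*}R^i\phi_{k*}\OO_{\tS_k}$, which by the first step equals $\iota_{S*}\OO_{S_k}$ for $i=0$ and vanishes for $i\ge 1$. The resulting long exact sequence shows that multiplication by $t$ is surjective on each coherent $\OO_S$-module $R^i\phi_*\OO_\tS$ for $i\ge 1$. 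Flat base change to the open $S_K\subset S$, where $\phi_K$ is an isomorphism (the generic fiber being smooth, the anticanonical map is an embedding for $m$ large), gives $R^i\phi_*\OO_\tS|_{S_K}=0$, so these sheaves are supported in $S_k$; Nakayama's lemma at each stalk then forces them to vanish.

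Finally, for $i=0$, I would examine the adjunction map $\OO_S\to\phi_*\OO_\tS$. Its kernel is a coherent subsheaf of $\OO_S$ vanishing on the dense open $S_K$, hence is zero by integrality of $S$. Let $\mathcal{Q}$ be its cokernel; then $\mathcal{Q}|_{S_K}=0$. Reducing modulo $t$ and combining $\OO_S/t\OO_S=\iota_{S*}\OO_{S_k}$ (from flatness of $S$ over $R$) with $\phi_*\OO_\tS/t\phi_*\OO_\tS\cong\iota_{S*}\OO_{S_k}$ (from the long exact sequence of the previous paragraph), the induced map becomes the natural identity on $\iota_{S*}\OO_{S_k}$; in particular $\mathcal{Q}/t\mathcal{Q}=0$. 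A final application of Nakayama to the coherent sheaf $\mathcal{Q}$ (supported in $S_k$) yields $\mathcal{Q}=0$.
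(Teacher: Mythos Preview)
Your argument is correct. Both your proof and the paper's rest on the same special-fiber input (rationality of $\mathbf{ADE}$-singularities; the paper cites Demazure, you cite Artin), but the passage from $\tS_k$ to $\tS$ differs. The paper first reduces to the case where $R$ is complete, then proves by induction that $\phi_*\OO_{n\tS_k}=\OO_{nS_k}$ and $R^i\phi_*\OO_{n\tS_k}=0$ for all $n\ge 1$ using the sequences $0\to\OO_{(n-1)\tS_k}\xrightarrow{\cdot\pi}\OO_{n\tS_k}\to\OO_{\tS_k}\to 0$, and finally invokes the Theorem on Formal Functions. You instead apply $R\phi_*$ once to $0\to\OO_\tS\xrightarrow{\cdot t}\OO_\tS\to\iota_*\OO_{\tS_k}\to 0$ and conclude by Nakayama, using that the higher direct images are coherent and supported on $S_k$. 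Your route is more elementary in that it avoids both the completion step and formal functions; the paper's route is slightly cleaner for $i=0$, since the inductive statement $\phi_*\OO_{n\tS_k}=\OO_{nS_k}$ handles the unit map directly without the separate cokernel analysis you carry out. One small point worth making explicit in your write-up: the isomorphism $\phi_*\OO_\tS/t\,\phi_*\OO_\tS\cong\iota_{S*}\OO_{S_k}$ uses the vanishing of $R^1\phi_*\OO_\tS$ established in the previous step, and the identification of the reduced map with the adjunction for $\phi_k$ goes through the base-change morphism; both are routine but deserve a word.
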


\begin{proof}
  Since $R^i\phi_*$ commutes with flat base change, and the completion
  of $R$ is flat over $R$, we may assume without loss of generality
  that $R$ is complete.

  We show by induction that $\phi_*\OO_{n\tS_k} = \OO_{nS_k}$ and
  $R^i\phi_*\OO_{n\tS_k} = 0$ for all $i > 0$. For $n=1$, this holds
  by \cite[Th\'eor\`eme~V.2]{MR579026}.  The induction step follows
  from the short exact sequence
  \begin{equation*}
    0 \to \OO_{(n-1)\tS_k} \xrightarrow{\cdot \pi} \OO_{n\tS_k} \to \OO_{\tS_k} \to 0,
  \end{equation*}
  where $\pi \in R$ is a generator of $\m$, and its analog for $S_k$.
  
  Using the Theorem on Formal Functions
  \cite[Theorem~III.11.1]{MR0463157}, the claim follows.
\end{proof}

\begin{lemma}\label{lem:root_sequence}
  Let $\Phi$ be a simply laced, irreducible root system, with simple roots
  $\Delta$ and positive roots $\Phi^+$. Let $\beta,\gamma \in \Phi^+$ such
  that $\gamma-\beta = \sum_{i=1}^t \alpha_i$, with all $\alpha_i \in \Delta$.
  Then there exist positive roots
  $\beta = \beta_0, \beta_1, \dots, \beta_t = \gamma$ such that
  $\beta_{i+1}-\beta_i \in \Delta$ for all $i$.
\end{lemma}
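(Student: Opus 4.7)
The plan is induction on $t$, the number of simple roots appearing in the given sum. The base case $t = 0$ is trivial since then $\gamma = \beta$. For $t \ge 1$, it suffices to find an index $j$ for which $\beta' := \beta + \alpha_j$ is itself a positive root: the remaining multiset $\{\alpha_i : i \neq j\}$ then expresses $\gamma - \beta'$ as a sum of $t - 1$ simple roots, so the inductive hypothesis applied to $(\beta', \gamma)$ produces a chain $\beta' = \beta_1, \beta_2, \dots, \beta_t = \gamma$, and prepending $\beta_0 = \beta$ gives the required chain.

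To produce such a $j$, I would argue by contradiction using standard inner-product calculus in a simply laced root system, normalized so that $(\alpha, \alpha) = 2$ for every root $\alpha$. Suppose $\beta + \alpha_j \notin \Phi^+$ for every $j \in \{1, \dots, t\}$. The reflection formula shows that if $(\alpha_j, \beta) = -1$, then $\beta + \alpha_j = s_{\alpha_j}(\beta)$ is a root, and manifestly positive as a nonnegative integer combination of simple roots; this would contradict our assumption. Therefore $(\alpha_j, \beta) \ge 0$ for every $j$. Summing and using $\gamma - \beta = \sum_j \alpha_j$, I would conclude
\begin{equation*}
  (\gamma, \beta) - (\beta, \beta) \ = \ (\gamma - \beta, \beta) \ = \ \sum_{j=1}^t (\alpha_j, \beta) \ \ge \ 0,
\end{equation*}
hence $(\gamma, \beta) \ge 2$. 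Since any two distinct positive roots $\gamma \neq \beta$ in a simply laced root system satisfy $(\gamma, \beta) \in \{-1, 0, 1\}$, this forces $\gamma = \beta$, contradicting $t \ge 1$.

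I do not expect a serious obstacle here: the argument is textbook root-system combinatorics, with the simply laced hypothesis used only to pin down the possible values of the relevant pairings (and in particular to guarantee that $(\alpha_j, \beta) < 0$ already implies $\beta + \alpha_j \in \Phi$, without needing to inspect a longer root string). The only point worth flagging is that $\gamma - \beta = \sum_i \alpha_i$ is a multiset expression, since the same simple root may occur with multiplicity, but this plays no role in the inner-product estimate above.
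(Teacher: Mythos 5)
Your proof is correct and is essentially the paper's argument run in the opposite direction: the paper pairs the relation $\gamma-\beta=\sum_i\alpha_i$ with $\gamma$ to produce a simple root that can be subtracted from $\gamma$, whereas you pair it with $\beta$ to produce one that can be added to $\beta$, and both steps hinge on the same bound $(\gamma,\beta)\in\{-1,0,1\}$ for distinct positive roots in a simply laced system. The remaining differences are cosmetic (contrapositive phrasing, and your normalization $(\alpha,\alpha)=2$ versus the paper's $(\alpha,\alpha)=-2$ inherited from the intersection form).
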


\begin{proof}
  We argue by induction on $t$; the cases $t=0$ and $t=1$ are clear. For $t
  \ge 2$, we note that
  \begin{equation*}
    -2 = (\gamma,\gamma) = (\gamma,\beta)+\sum_{i=1}^t (\gamma,\alpha_i).
  \end{equation*}
  The roots $\beta$ and $\gamma$ are not proportional since $\Phi$ is simply
  laced and since they are both positive, but not equal. Hence
  $(\gamma,\beta) \in \{0,1,-1\}$ \cite[Proposition
  IV.1.8]{MR0240238}. Therefore, $(\gamma,\alpha_i)<0$ for at least one
  $i$. Since both are positive, but not equal, we have
  $(\gamma,\alpha_i)=-1$. We define $\beta_{t-1}:=\gamma-\alpha_i$, which is a
  root since
  \begin{equation*}
    (\beta_{t-1},\beta_{t-1}) =
    (\gamma,\gamma)-2(\gamma,\alpha_i)+(\alpha_i,\alpha_i) = -2 + 2 -2 = -2,
  \end{equation*}
  and which is positive since it is the sum of $\beta$ and all $\alpha_j$ with
  $j \ne i$. By induction, we find a sequence
  $\beta = \beta_0, \dots, \beta_{t-1}$ as required.
\end{proof}

For the rest of this section, we fix one singular point $x$ on $S_k$.
Let $D_1, \dots, D_r$ be the $(-2)$-curves on $\tS_k$ that map to $x$. Let
\begin{equation*}
  Z=n_1D_1+\dots+n_rD_r
\end{equation*}
with $n_1, \dots, n_r \ge 1$ denote the \emph{fundamental cycle} on $\tS_k$ over $x$
(see \cite{MR0199191}). It has the property that $(Z,D_i) \le 0$ for all $i=1, \dots, r$,
and is minimal with this property. Here $(\cdot,\cdot)$ denotes the intersection number
of divisors on $\tS_k$. Put $N := n_1 + \dots + n_r$, and $\Zred:=D_1+\dots+D_r$.

\begin{lemma}\label{lem:combinatorial}
  There is a sequence of effective divisors
  \begin{equation}\label{eq:divisor_sequence}
    0=Z_0 < Z_1 < Z_2 < \dots < Z_r=\Zred < \dots < Z_N=Z
  \end{equation}
  on $\tS_k$ such that
  \begin{itemize}
  \item $Z_j-Z_{j-1}$ is a $(-2)$-curve $D_{i_j}$ for all $j=1, \dots, N$, and
  \item $(Z_j,D_i) \le 1$ for all $i=1, \dots, r$ and $j=1, \dots, N$.
  \end{itemize}
\end{lemma}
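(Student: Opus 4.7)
My plan is to encode the combinatorics via the ADE root system of the singularity at $x$ and reduce the statement to two applications of Lemma~\ref{lem:root_sequence}. Via the identification $D_i \leftrightarrow \alpha_i$ with the sign-flipped pairing $(\alpha_i,\alpha_j) := -(D_i,D_j)$, the curves $D_1,\ldots,D_r$ become the simple roots of an irreducible simply-laced root system, and any divisor $E = \sum c_i D_i$ corresponds to an element $\beta := \sum c_i \alpha_i$ of the root lattice with $(E,D_i) = -(\beta,\alpha_i)$. The crucial observation I will use is that if $\beta$ is a positive root distinct from $\alpha_i$, then the simply-laced hypothesis forces $(\beta,\alpha_i) \ge -1$ (exactly the inequality invoked in the proof of Lemma~\ref{lem:root_sequence}), so $(E,D_i) \le 1$ holds automatically; for $\beta = \alpha_i$ itself, $(E,D_i) = -2 \le 1$ trivially.

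Next I would verify that both $\Zred$ and $Z$ correspond to positive roots in this system. For $\Zred = \sum_i D_i$, the ADE Dynkin diagram is a tree on $r$ vertices with $r-1$ edges, so $(\sum \alpha_i,\sum \alpha_i) = 2r - 2(r-1) = 2$, identifying $\sum \alpha_i$ as a positive root. For $Z$, rationality of the ADE singularity at $x$ forces $p_a(Z) = 0$; together with adjunction and $K_{\tS_k}\cdot D_i = 0$ this yields $(Z,Z) = -2$, and then the defining inequality $(Z,D_i) \le 0$ translates to dominance of the corresponding positive root, identifying it with the highest root $\theta$.

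With these identifications in place, I will build the sequence in two phases. In phase~1, I pick any $i_1 \in \{1,\ldots,r\}$ and apply Lemma~\ref{lem:root_sequence} to $\beta = \alpha_{i_1}$ and $\gamma = \sum_j \alpha_j$ (whose difference is a sum of $r-1$ simple roots), obtaining positive roots $\beta^{(1)}_0 = \alpha_{i_1},\ldots,\beta^{(1)}_{r-1} = \sum_j \alpha_j$ with consecutive differences in $\Delta$; I set $Z_0 := 0$ and let $Z_j$ be the divisor corresponding to $\beta^{(1)}_{j-1}$ for $1 \le j \le r$. In phase~2, I apply the same lemma to $\beta = \sum_j \alpha_j$ and $\gamma = \theta$ (difference a sum of $N-r$ simple roots) to produce $Z_r, Z_{r+1},\ldots,Z_N = Z$ in the same fashion. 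Each $Z_j$ with $j \ge 1$ is a positive root, so $(Z_j,D_i) \le 1$ is automatic, while for $Z_0 = 0$ it is trivial; consecutive differences are simple roots, i.e.\ $(-2)$-curves, by construction. The only real obstacle in this approach is establishing the two root-theoretic identifications above; once they are in hand, Lemma~\ref{lem:root_sequence} handles all the combinatorics.
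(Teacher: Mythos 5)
Your proposal is correct and follows essentially the same route as the paper: identify $D_1,\dots,D_r$ with the simple roots of an irreducible simply-laced root system, check that $\Zred$ and the fundamental cycle $Z$ are positive roots (with $Z$ the highest root), apply Lemma~\ref{lem:root_sequence} once from a simple root to $\Zred$ and once from $\Zred$ to $Z$, and deduce $(Z_j,D_i)\le 1$ from the simply-laced condition. The only difference is cosmetic: you derive that $Z$ is the highest root via $p_a(Z)=0$ and adjunction, where the paper cites the literature, and you work with the sign-flipped (positive-definite) pairing rather than the intersection form directly.
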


\begin{proof}
  The classes of $D_1, \dots, D_r$ are the simple roots of an
  irreducible root system $\Phi$ of type $\mathbf{A}_r$ ($r \ge 1$) or
  $\mathbf{D}_r$ ($r \ge 4$) or $\mathbf{E}_r$ ($r = 6, 7, 8$). By
  \cite[Remark~0.2.1]{MR986969}, the fundamental cycle $Z$ is its
  maximal root. The reduced fundamental cycle $\Zred$ is a positive
  root because
  \begin{equation*}
    (\Zred,\Zred) = \sum_{i=1}^r (D_i,D_i)+2\sum_{1 \le i < j \le r} (D_i,D_j) 
    = r\cdot (-2)+2 \cdot (r-1)\cdot 1 = -2
  \end{equation*}
  and $(\Zred,\omega_{\tS_k}^{-1}) = 0$.

  Applying Lemma~\ref{lem:root_sequence} first to $D_1$ and $\Zred$
  and then to $\Zred$ and $Z$ gives a sequence of effective divisors
  as in (\ref{eq:divisor_sequence}) such that $Z_j-Z_{j-1}$ is a
  simple root and therefore a $(-2)$-curve. Since $\Phi$ is simply
  laced, each of the positive roots $Z_j$ has intersection number $\le 1$
  with each of the simple roots $D_i$.
\end{proof}

\begin{lemma} \label{lem:Z_j}
  Let $Z_j \subset \tS_k$ be the closed subschemes given by Lemma~\ref{lem:combinatorial}.
  \begin{itemize}
   \item[(i)] $H^1(D_{i_j}, \II_{Z_{j-1} \subset Z_j}) = 0$ for $j=1 , \dots, N$.
    \smallskip
   \item[(ii)] $H^1(Z, \II_{Z \subset \tS_k}^n/\II_{Z \subset \tS_k}^{n+1}) = 0$ for $n \ge 0$.
  \end{itemize}
\end{lemma}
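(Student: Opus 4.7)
My plan is to reduce both parts to the vanishing $H^1(\PP^1, \OO(d))=0$ for $d \ge -1$, by identifying the relevant sheaves on each $(-2)$-curve $D_{i_j} \cong \PP^1$ and checking the degree. The filtration supplied by Lemma~\ref{lem:combinatorial} is exactly tailored to this task.

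For part~(i), I would start from the short exact sequence
\begin{equation*}
  0 \to \OO_{\tS_k}(-Z_j) \to \OO_{\tS_k}(-Z_{j-1}) \to \OO_{\tS_k}(-Z_{j-1})|_{D_{i_j}} \to 0
\end{equation*}
on $\tS_k$, in which the right-hand map is the restriction to $D_{i_j}$; the kernel is $\OO_{\tS_k}(-Z_{j-1}) \otimes \OO_{\tS_k}(-D_{i_j}) = \OO_{\tS_k}(-Z_j)$ because $Z_j = Z_{j-1} + D_{i_j}$. This identifies $\II_{Z_{j-1} \subset Z_j}$ with the line bundle $\OO_{\tS_k}(-Z_{j-1})|_{D_{i_j}}$ on $D_{i_j}$. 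Its degree equals $-(Z_{j-1}, D_{i_j})$, which is $\ge -1$ by Lemma~\ref{lem:combinatorial}, so $H^1$ on $\PP^1$ vanishes.

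For part~(ii), I would first note that $\II_{Z \subset \tS_k}$ is invertible since $Z$ is a Cartier divisor on the smooth surface $\tS_k$, whence
\begin{equation*}
  \II_{Z \subset \tS_k}^n / \II_{Z \subset \tS_k}^{n+1} \cong \OO_{\tS_k}(-nZ)|_Z.
\end{equation*}
I would then prove $H^1(Z_j, \OO_{\tS_k}(-nZ)|_{Z_j}) = 0$ by induction on $j$, the base case $j=0$ being vacuous. Tensoring $0 \to \II_{Z_{j-1} \subset Z_j} \to \OO_{Z_j} \to \OO_{Z_{j-1}} \to 0$ with the line bundle $\OO_{\tS_k}(-nZ)$ gives
\begin{equation*}
  0 \to \OO_{\tS_k}(-Z_{j-1} - nZ)|_{D_{i_j}} \to \OO_{\tS_k}(-nZ)|_{Z_j} \to \OO_{\tS_k}(-nZ)|_{Z_{j-1}} \to 0.
\end{equation*}
The left-hand line bundle on $D_{i_j} \cong \PP^1$ has degree $-(Z_{j-1}, D_{i_j}) - n(Z, D_{i_j}) \ge -1$, since $(Z_{j-1}, D_{i_j}) \le 1$ by Lemma~\ref{lem:combinatorial} and $(Z, D_{i_j}) \le 0$ by the defining property of the fundamental cycle; so its $H^1$ vanishes. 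The long exact sequence together with the inductive hypothesis yields the claim for $j$, and taking $j = N$ proves (ii).

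The main technical point is the identification of the graded pieces $\II_{Z_{j-1} \subset Z_j}$ as line bundles on the individual exceptional curves and the corresponding intersection-number bookkeeping; once this is in place, the cohomology vanishing is an immediate $\PP^1$-computation. There is no deep obstacle, since Lemma~\ref{lem:combinatorial} is designed to deliver exactly the degree bound $(Z_{j-1}, D_{i_j}) \le 1$ required at each step.
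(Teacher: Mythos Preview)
Your proof is correct and follows essentially the same route as the paper: both identify $\II_{Z_{j-1}\subset Z_j}$ with the line bundle $\OO_{\tS_k}(-Z_{j-1})|_{D_{i_j}}$ of degree $-(Z_{j-1},D_{i_j})\ge -1$ on $D_{i_j}\cong\PP^1$, and then twist by $\OO_{\tS_k}(-nZ)$ and induct on $j$ using the short exact sequence $0\to \II_{Z_{j-1}\subset Z_j}\to\OO_{Z_j}\to\OO_{Z_{j-1}}\to 0$ to obtain part~(ii). The only differences are cosmetic (the paper writes the identification as a quotient $\OO(-Z_{j-1})/\OO(-Z_j)$ rather than via the restriction exact sequence).
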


\begin{proof}
  The ideal sheaf of the effective divisor $Z$ on the smooth projective surface $\tS_k$
  is the line bundle $\OO( -Z) := \OO_{\tS_k}( -Z)$. Therefore, we have
  \begin{equation} \label{eq:I^n}
    \dfrac{\II_{Z \subset \tS_k}^n}{\II_{Z \subset \tS_k}^{n+1}} \cong \dfrac{\OO(-nZ)}{\OO(-(n+1)Z)}
      \cong \dfrac{\OO_{\tS_k}}{\II_{Z \subset \tS_k}} \otimes \OO(-nZ)
      \cong \OO_Z \otimes \OO(-nZ).
  \end{equation}
  Since $Z_j - Z_{j-1} = D_{i_j}$ according to Lemma~\ref{lem:combinatorial}, we similarly have
  \begin{equation} \label{eq:Z_j}
    \II_{Z_{j-1} \subset Z_j} \cong \dfrac{\OO(-Z_{j-1})}{\OO(-Z_j)}
      \cong \dfrac{\OO_{\tS_k}}{\II_{D_{i_j} \subset \tS_k}} \otimes \OO(-Z_{j-1})
      \cong \OO_{D_{i_j}} \otimes \OO(-Z_{j-1}),
  \end{equation}
  which is a line bundle of degree $(-Z_{j-1},D_{i_j}) \ge -1$ on $D_{i_j} \cong \PP^1$.
  But the first cohomology of any such line bundle vanishes. This proves part (i).

  Twisting the isomorphism \eqref{eq:Z_j} by the line bundle $\OO( -nZ)$ on $\tS_k$, we get
  \begin{equation*}
    \II_{Z_{j-1} \subset Z_j} \otimes \OO( -nZ) \cong \OO_{D_{i_j}} \otimes \OO(-Z_{j-1}-nZ),
  \end{equation*}
  which is now a line bundle of degree $(-Z_{j-1}-nZ,D_{i_j})$ on $D_{i_j} \cong \PP^1$.
  But this degree is still $\ge -1$, because the fundamental cycle $Z$ satisfies
  $(Z,D_{i_j}) \le 0$ by definition, and $n \ge 0$ by assumption. Hence we have more generally
  \begin{equation*}
    H^1( D_{i_j}, \II_{Z_{j-1} \subset Z_j} \otimes \OO( -nZ)) = 0.
  \end{equation*}
  Using induction over $j$, and the short exact sequences
  \begin{equation*}
    0 \to \II_{Z_{j-1} \subset Z_j} \to \OO_{Z_j} \to \OO_{Z_{j-1}} \to 0
  \end{equation*}
  twisted by the line bundle $\OO( -nZ)$ on $\tS_k$, we conclude that
  \begin{equation*}
    H^1( Z, \OO_Z \otimes \OO( -nZ)) = 0.
  \end{equation*}
  Because of the isomorphism \eqref{eq:I^n}, this proves part (ii) of the lemma.
\end{proof}

\begin{proposition} \label{prop:I^n/I^n+1}
  $H^1(Z, \II_{Z \subset \tS}^n/\II_{Z \subset \tS}^{n+1}) = 0$ for $n \ge 0$.
\end{proposition}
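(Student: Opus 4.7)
The plan is to reduce to Lemma~\ref{lem:Z_j}(ii) by exploiting the fact that $Z \hookrightarrow \tS$ is a regular immersion of codimension~$2$. Indeed, $\tS$ is regular---either as an iterated blow-up of $\PP^2_R$ along $R$-sections or as a $\PP^1$-bundle over $\PP^1_R$---and locally at any point of $Z$ the ideal $\II := \II_{Z \subset \tS}$ is generated by a regular sequence $(\pi,g)$, where $\pi \in R$ is a uniformizer and $g \in \OO_\tS$ is a local lift of an equation of the Cartier divisor $Z$ on $\tS_k$. Here $\pi$ is a non-zero-divisor in $\OO_\tS$ by flatness of $\tS$ over $R$, and its image in $\OO_\tS/\pi\OO_\tS = \OO_{\tS_k}$ is a non-zero-divisor because $Z$ is a Cartier divisor on $\tS_k$.

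Consequently $\II^n/\II^{n+1}$ identifies with $\mathrm{Sym}^n_{\OO_Z}(\II/\II^2)$, and the composition of regular immersions $Z \hookrightarrow \tS_k \hookrightarrow \tS$ yields a short exact conormal sequence
\begin{equation*}
  0 \to \OO_Z \to \II/\II^2 \to \II_k/\II_k^2 \to 0,
\end{equation*}
where $\II_k := \II_{Z \subset \tS_k}$, and the left-hand $\OO_Z$ is the restriction to $Z$ of the conormal bundle of $\tS_k \subset \tS$, globally trivialized by $\pi$.

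Taking symmetric powers with respect to this trivial sub-line-bundle equips $\II^n/\II^{n+1}$ with a filtration
\begin{equation*}
  \II^n/\II^{n+1} = F_0 \supset F_1 \supset \dots \supset F_n \supset F_{n+1} = 0
\end{equation*}
whose successive quotients are $F_i/F_{i+1} \cong (\II_k/\II_k^2)^{\otimes(n-i)} \cong \OO_Z \otimes \OO_{\tS_k}(-(n-i)Z)$. By the isomorphism~\eqref{eq:I^n} these are precisely $\II_k^{n-i}/\II_k^{n-i+1}$, whose $H^1$ on $Z$ vanishes by Lemma~\ref{lem:Z_j}(ii). The desired vanishing then follows by induction along the filtration using the long exact sequence in cohomology.

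The main step to verify carefully is the regular immersion property and the resulting isomorphism $\II^n/\II^{n+1} \cong \mathrm{Sym}^n(\II/\II^2)$, together with the exactness on the left of the conormal sequence; both rest on the regularity of the threefold $\tS$ and the fact that $\tS_k \subset \tS$ is cut out globally by the regular element $\pi$. I anticipate no serious difficulty beyond these standard verifications.
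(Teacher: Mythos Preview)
Your argument is correct. Both your approach and the paper's establish the same filtration on $\II^n/\II^{n+1}$ with successive quotients $\II_k^m/\II_k^{m+1}$ (for $m=0,\dots,n$) and then invoke Lemma~\ref{lem:Z_j}(ii), but they arrive at this filtration differently. The paper proceeds by a direct local computation: it verifies by hand that $\pi\,\II^n = \II^{n+1}\cap\pi\,\OO_\tS$ (using that $\tS_k$ is integral), deduces the short exact sequence
\[
  0 \to \II^{n-1}/\II^n \xrightarrow{\ \cdot\pi\ } \II^n/\II^{n+1} \to \II_k^n/\II_k^{n+1} \to 0,
\]
and then runs induction on $n$. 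You instead recognize $Z\hookrightarrow\tS$ as a regular immersion of codimension~$2$, invoke the standard identification $\II^n/\II^{n+1}\cong\mathrm{Sym}^n(\II/\II^2)$, and read off the full filtration at once from the exact conormal sequence and the symmetric-power filtration. Your route is more conceptual and packages the local calculation inside general structure theory; the paper's is more elementary and self-contained, proving exactly what is needed without appeal to the regular-immersion machinery. (One small slip: in your regular-sequence check, ``its image in $\OO_\tS/\pi\OO_\tS$'' should refer to $g$, not to $\pi$.)
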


\begin{proof}
  Let $\pi \in R$ be a generator of $\m$. We first claim that the inclusion
  \begin{equation} \label{eq:pi_I^n}
    \pi \II_{Z \subset \tS}^n \subset \II_{Z \subset \tS}^{n+1} \cap \pi \OO_{\tS} 
  \end{equation}
  is an equality. It suffices to check this over the local ring $\OO_{\tS, z}$ of each point $z \in Z$.
  We choose a local function $f \in \OO_{\tS, z}$ whose residue class
  \begin{equation*}
    \fbar \in \OO_{\tS, z}/\pi\OO_{\tS, z} = \OO_{\tS_k, z}
  \end{equation*}
  is a local equation for the divisor $Z \subset \tS_k$.
  Then $\pi$ and $f$ generate $\II_{Z \subset \tS}$ in $z$.
  Hence $\pi \II_{Z \subset \tS}^n$ and $f^{n+1}$ generate $\II_{Z \subset \tS}^{n+1}$ in $z$.
  Suppose that
  \begin{equation*}
    f^{n+1} g \in \pi \OO_{\tS, z}
  \end{equation*}
  for some $g \in \OO_{\tS, z}$. Then its residue class $\overline{g} \in \OO_{\tS_k, z}$ satisfies
  \begin{equation*}
    \overline{f}^{n+1} \overline{g} = 0 \in \OO_{\tS_k, z}.
  \end{equation*}
  Since $\tS_k$ is integral and $\overline{f} \neq 0$, this implies $\overline{g} = 0$,
  and hence $g \in \pi \OO_{\tS, z}$. In particular, $f^{n+1} g$ lies in $\pi \II_{Z \subset \tS}^n$.
  Therefore, \eqref{eq:pi_I^n} is indeed an equality.

  Because of the natural short exact sequence
  \begin{equation*}
    0 \to \OO_{\tS} \xrightarrow{ \cdot \pi} \II_{Z \subset \tS} \to \II_{Z \subset \tS_k} \to 0,
  \end{equation*}
  the induced map $\II_{Z \subset \tS}^n/\II_{Z \subset \tS}^{n+1} \to
  \II_{Z \subset \tS_k}^n/\II_{Z \subset \tS_k}^{n+1}$ is surjective with kernel
  \begin{equation*}
    (\II_{Z \subset \tS}^n \cap \pi \OO_{\tS})/(\II_{Z \subset \tS}^{n+1} \cap \pi \OO_{\tS}).
  \end{equation*}
  As \eqref{eq:pi_I^n} is an equality,
  this kernel is $\pi \II_{Z \subset \tS}^{n-1}/\pi \II_{Z \subset \tS}^n$. Thus the sequence
  \begin{equation*}
    0 \to \II^{n-1}_{Z \subset \tS}/\II_{Z \subset \tS}^n
      \xrightarrow{ \cdot \pi} \II_{Z \subset \tS}^n/\II_{Z \subset \tS}^{n+1}
      \to \II_{Z \subset \tS_k}^n/\II_{Z \subset \tS_k}^{n+1} \to 0
  \end{equation*}
  is exact. The proposition follows from this by induction over $n$,
  using part (ii) of Lemma~\ref{lem:Z_j} for the case $n = 0$ and for the induction step.
\end{proof}

\section{Reductive groups and universal torsors}\label{sec:groups}

We continue in the setting of Section~\ref{sec:del_pezzo} and
construct certain algebraic groups naturally associated to the Picard
group of $\tS_k$.  

Since $\tS$, $\tS_K$ and $\tS_k$ are obtained by the same sequence of
blow-ups of a $\PP^2$, or all as $\PP^1$-bundles over $\PP^1$, the
canonical restriction maps
\begin{equation*}
  \Pic( \tS_K) \gets \Pic( \tS) \to \Pic( \tS_k)
\end{equation*}
are isomorphisms; we denote this abelian group by $\Lambda$.
The canonical bundles of $\tS$, $\tS_K$ and
$\tS_k$ define the same class in $\Lambda$ due to
Lemma~\ref{lem:canonical_sheaf}; we denote it by $K_{\tS} \in
\Lambda$.

The intersection forms on $\tS_K$ and on $\tS_k$ define the same
bilinear form $(\cdot,\cdot)$ on $\Lambda$. Let $\Lambda^\vee$ be the
dual of $\Lambda$, and denote the canonical pairing between
$\Lambda^\vee$ and $\Lambda$ by $\langle\cdot,\cdot\rangle$.  The root
system of the smooth del Pezzo surface $\tS_K$ is the set
\begin{equation}\label{eq:Psi}
  \Phi_0 := \{ \alpha \in \Lambda \mid (\alpha,\alpha)=-2,\ (\alpha,-K_{\tS})=0\}
\end{equation}
of $(-2)$-classes in $\Lambda$. It has type $\mathbf{E}_8,
\mathbf{E}_7, \mathbf{E}_6, \mathbf{D}_5, \mathbf{A}_4,
\mathbf{A}_2+\mathbf{A}_1, \mathbf{A}_1$ for $d=1,2,3,4,5,6,7$,
respectively, and type $\mathbf{A}_1$ for $d=8$ in the case (ii) of
$\PP^1$-bundles. Otherwise, $\Phi_0 = \emptyset$.

Let $\Phi \subset \Phi_0$ be the set of
$(-2)$-classes that are effective or anti-effective on $\tS_k$.  Put
\begin{equation*}
  \Phi^\vee:= \{\alpha^\vee \in \Lambda^\vee \mid \alpha \in \Phi\},
\end{equation*}
where $\alpha^\vee \in \Lambda^\vee$ is defined by
$\langle\alpha^\vee,x\rangle := (-\alpha,x)$.  Then a simple computation shows
that $(\Lambda,\Phi,\Lambda^\vee, \Phi^\vee)$ is a reduced root datum in the
sense of \cite[Expos\'e XXI, D\'efinition~1.1.1, 2.1.3]{MR2867622}. Let $G$ be
the associated split reductive group over $R$ \cite[Expos\'e XXV,
Corollaire~1.2]{MR2867622}.  Then the commutator subgroup $[G,G]$ is a
semisimple group over $R$ whose Dynkin diagram has the same type as the
singularities of $S_k$. The dimension of the maximal torus quotient $G/[G,G]$
is $10-d$ minus the rank of this Dynkin diagram.

Let $\frakg$ be the Lie algebra of $G$, with root spaces
$\frakg_\alpha \subset \frakg$ for $\alpha \in \Phi$.  The maximal torus $T$
of $G$ has character group $\Lambda$.  Therefore, $T$, $T_K$, and $T_k$ are
the N\'eron-Severi tori of $\tS$, $\tS_K$, and $\tS_k$, respectively.  Let $B$
be the Borel subgroup of $G$ containing $T$ such that the associated set
$\Delta$ of simple roots in $\Phi$ is the set of classes of the $(-2)$-curves
on $\tS_k$. Let $\frakt$ and $\frakb$ be the Lie algebras of $T$ and of $B$,
respectively.  The corresponding set $\Phi^+$ of positive roots consists
precisely of the effective $(-2)$-classes on $\tS_k$.

By a \emph{universal torsor} over $\tS$, we mean a $T$-torsor $\T$ such that
the $\Gm$-torsor $\lambda_*\T$ over $\tS$ obtained by extension of structure
group along every character $\lambda \in \Lambda = \Hom(T,\Gm)$ has class
$\lambda \in \Lambda = \Pic(\tS)$. Such universal
torsors exist and are unique up to isomorphism because $T \cong
\Gm^{10-d}$.

\begin{remark}
  The notion of universal torsor has been defined by Colliot-Th\'el\`ene and
  Sansuc \cite[(2.0.4)]{MR89f:11082} over base fields and, more generally, by
  Salberger \cite[Definition~5.14]{MR1679841} over Noetherian base
  schemes. Our definition is a special case of Salberger's.
\end{remark}

Let $\T$ be a universal torsor over $\tS$. Then the line bundle
$L_\alpha := \T \times^T \frakg_\alpha$ on $\tS$ has class
$\alpha \in \Lambda = \Pic(\tS)$ for each $\alpha \in \Phi$.

\begin{lemma}\label{lem:h1}
  For $\alpha \in \Phi^+$, the $R$-module $H^1(\tS,L_\alpha)$ is
  non-zero, cyclic and torsion (hence isomorphic to $R/\m^{n_\alpha}$
  for some $n_\alpha \ge 1$), the canonical map
  \begin{equation}\label{eq:base_change}
    H^1(\tS,L_\alpha) \otimes_R k \to H^1(\tS_k, L_{\alpha,k})
  \end{equation}
  is an isomorphism, and $H^0(\tS, L_\alpha) = H^2(\tS, L_\alpha)=0$.
\end{lemma}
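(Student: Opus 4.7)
The plan is to compute $H^i$ on the two fibers of $\tS\to\Spec R$ separately and then assemble via the long exact sequence associated to $0\to L_\alpha\xrightarrow{\cdot\pi}L_\alpha\to L_{\alpha,k}\to 0$. On the generic fiber $\tS_K$, a smooth split del Pezzo surface on which $-K_\tS$ restricts to an ample class, the conditions $(\alpha,\alpha)=-2$ and $(\alpha,-K_\tS)=0$ with $\alpha\ne 0$ rule out effectivity of $\alpha$, of $-\alpha$, and of $K_\tS-\alpha$. Hence $h^0(\tS_K,L_{\alpha,K})=h^2(\tS_K,L_{\alpha,K})=0$ (the latter by Serre duality), and Riemann--Roch then forces $h^1(\tS_K,L_{\alpha,K})=0$ as well.

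The main work is on the special fiber, where I claim $h^0(\tS_k,L_{\alpha,k})=h^1(\tS_k,L_{\alpha,k})=1$ and $h^2(\tS_k,L_{\alpha,k})=0$. I would induct on the height of $\alpha$. In the base case $\alpha=D$ a $(-2)$-curve, the sequence $0\to\OO_{\tS_k}\to\OO_{\tS_k}(D)\to\OO_D(-2)\to 0$ combined with the rationality of $\tS_k$ (giving $H^i(\OO_{\tS_k})=0$ for $i\ge 1$, already used in Lemma~\ref{lem:base_change}) yields the claim at once. For larger height, an application of Lemma~\ref{lem:root_sequence} (starting from any simple root with nonzero coefficient in $\alpha$) produces a decomposition $\alpha=\alpha'+D'$ with $\alpha'$ a positive root of height one less and $D'$ a simple root. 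The simply-laced identity $(\alpha',D')=1$, forced by $(\alpha,\alpha)=(\alpha',\alpha')=-2$, yields $(\alpha,D')=-1$, so the short exact sequence $0\to\OO_{\tS_k}(\alpha')\to\OO_{\tS_k}(\alpha)\to\OO_{D'}(-1)\to 0$ gives $H^i(\tS_k,L_{\alpha,k})\cong H^i(\tS_k,L_{\alpha',k})$ in every degree, and the induction closes.

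To assemble the total-space statements, first note $H^0(\tS,L_\alpha)\hookrightarrow H^0(\tS_K,L_{\alpha,K})=0$ by $R$-flatness of the line bundle $L_\alpha$, so $H^0(\tS,L_\alpha)=0$. Next, the long exact sequence together with $H^2(\tS_k,L_{\alpha,k})=0$ shows that $\pi$ acts surjectively on the finitely generated $R$-module $H^2(\tS,L_\alpha)$ (finite generation by properness of $\tS/R$), so $H^2(\tS,L_\alpha)=0$ by Nakayama. What remains of the long exact sequence is
\begin{equation*}
0\to k\to H^1(\tS,L_\alpha)\xrightarrow{\cdot\pi}H^1(\tS,L_\alpha)\to k\to 0,
\end{equation*}
so $H^1(\tS,L_\alpha)\otimes_R k\cong k$, which by Nakayama makes $H^1(\tS,L_\alpha)$ cyclic, hence of the form $R/\mathfrak{a}$. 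The generic-fiber vanishing shows $\mathfrak{a}\ne 0$, while the nontrivial cokernel forces $\mathfrak{a}\ne R$, so $\mathfrak{a}=\m^{n_\alpha}$ for some $n_\alpha\ge 1$. The base change map $H^1(\tS,L_\alpha)\otimes_R k\to H^1(\tS_k,L_{\alpha,k})$ is the induced surjection between two copies of $k$, hence an isomorphism.

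The crux is the inductive computation on $\tS_k$: the intersection identity $(\alpha,D')=-1$, which makes the restriction $\OO(\alpha)|_{D'}$ the cohomologically trivial $\OO_{\PP^1}(-1)$, is the key point that collapses the induction at every step. Everything afterward is mechanical assembly via the long exact sequence and Nakayama's lemma.
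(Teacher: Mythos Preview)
Your proof is correct, and takes a somewhat different route from the paper's.

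On the special fiber, the paper argues directly: Riemann--Roch gives $\chi(L_{\alpha,k})=0$, Serre duality gives $h^2=0$ (since $K_{\tS}-\alpha$ has negative degree against the nef class $-K_{\tS}$), and the bound $h^0\le 1$ comes from the observation that only finitely many curves on $\tS_k$ have zero anticanonical degree (as the anticanonical map $\tS_k\to S_k$ is birational). Your inductive argument via Lemma~\ref{lem:root_sequence} is a legitimate alternative; in fact, the isomorphism $H^i(\tS_k,L_{\alpha',k})\cong H^i(\tS_k,L_{\alpha,k})$ that you extract from $\OO_{D'}(-1)$ is precisely the content of Lemma~\ref{lem:cup}, which the paper proves later for a different purpose. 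So you are anticipating that lemma here.

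For the total-space statements, the paper invokes Grauert's theorem and Cohomology and Base Change \cite[III.12]{MR0463157} as black boxes, whereas you extract everything from the single long exact sequence for $0\to L_\alpha\xrightarrow{\pi}L_\alpha\to L_{\alpha,k}\to 0$ plus Nakayama. Your approach is more elementary and self-contained; the paper's is shorter to write once those theorems are available. Both yield the same conclusions, and your identification of the base-change map with the surjection coming from the long exact sequence is correct.
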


\begin{proof}
  Since $\alpha$ is effective on $\tS_k$, we know that
  \begin{equation}\label{eq:Hi_Lalpha_over_k}
    \dim H^i(\tS_k, L_{\alpha,k}) = \begin{cases} 1 & \text{for } i=0,1,\\
      0 & \text{for } i=2.\end{cases}
  \end{equation}
  Indeed, $p_a(\tS_k)=p_a(\PP^2_k)=0$ since the arithmetic genus is a birational invariant,
  and hence the Riemann--Roch formula gives
  \begin{equation*}
    \chi(L_{\alpha,k})=0.
  \end{equation*}
  The class $K_{\tS} - \alpha$ has intersection number $-d < 0$ with the nef class $-K_{\tS}$,
  and is therefore not effective. Consequently, Serre duality gives
  \begin{equation*}
    H^2(\tS_k, L_{\alpha,k})=0.
  \end{equation*}
  Since the anticanonical morphism $\tS_k \to S_k$ is birational, there are only
  finitely many curves on $\tS_k$ whose intersection number with $-K_{\tS}$ is $0$.
  But every curve of class $\alpha$ has this property, which implies
  \begin{equation*}
    \dim H^0(\tS_k, L_{\alpha,k}) \le 1.
  \end{equation*}
  As $\alpha$ is effective on $\tS_k$, we get $H^0(\tS_k,L_{\alpha,k}) \cong k$,
  and hence also
  \begin{equation} \label{eq:H^1(L_alpha)}
    H^1(\tS_k,L_{\alpha,k}) \cong k.
  \end{equation}
  
  Over $K$ instead of $k$, the same arguments apply, but
  $\alpha$ is not effective over $\tS_K$, and therefore
  \begin{equation}\label{eq:Hi_over_K}
    H^i(\tS_K, L_{\alpha,K}) = 0
  \end{equation}
  for $i=0,1,2$.

  This implies that $H^1(\tS,L_\alpha)$ is torsion, and
  $H^2(\tS, L_\alpha)=0$ by Grauert's Theorem \cite[Corollary~III.12.9]{MR0463157}.
  Each section of the line bundle $L_\alpha$ vanishes on the generic fiber $\tS_K$, and hence on
  $\tS$. Therefore, $H^0(\tS,L_\alpha)=0$.

  Applying Cohomology and Base Change, we consider the natural maps
  \begin{equation*}
    \varphi^i : H^i(\tS, L_\alpha) \otimes_R k \to H^i(\tS_k,L_{\alpha,k}).
  \end{equation*}
  For $i=2$, both sides vanish. Using \cite[Theorem~III.12.11]{MR0463157} twice,
  we conclude first that $\varphi^1$ is surjective, and then that $\varphi^1$
  is an isomorphism. Due to \eqref{eq:H^1(L_alpha)}, this implies that
  $H^1(\tS,L_\alpha)$ is non-zero and cyclic.
\end{proof}

For $\alpha,\beta \in \Phi^+$ with $\alpha+\beta \in \Phi^+$, the Lie
bracket $[\_,\_] : \frakg_\alpha \otimes \frakg_\beta \to
\frakg_{\alpha+\beta}$ induces a morphism
\begin{equation}\label{eq:Lie_bracket}
  [\_,\_] : L_\alpha \otimes L_\beta \to L_{\alpha+\beta}.
\end{equation}

\begin{lemma}\label{lem:cup}
  Let $\alpha \in \Delta$ and $\beta \in \Phi^+$ such that
  $\alpha+\beta \in \Phi^+$. Then the cup product
  \begin{equation*}
    k \otimes_k k \cong H^0(\tS_k, L_{\alpha,k}) \otimes_k H^i(\tS_k, L_{\beta,k})
      \to H^i(\tS_k, L_{\alpha+\beta,k}) \cong k
  \end{equation*}
  induced by (\ref{eq:Lie_bracket}) is non-zero for $i = 0, 1$.
\end{lemma}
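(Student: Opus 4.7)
My plan is to handle both cases $i = 0$ and $i = 1$ uniformly, by interpreting the cup product as multiplication by a distinguished section. Since $\alpha \in \Delta$ is simple, it is the class of a unique $(-2)$-curve $D_\alpha \subset \tS_k$, so $L_{\alpha,k} \cong \OO_{\tS_k}(D_\alpha)$ carries a canonical nonzero section $s_\alpha$ vanishing along $D_\alpha$. In the simply laced case the Chevalley structure constant $N_{\alpha,\beta}$ equals $\pm 1$, so the Lie bracket $\frakg_\alpha \otimes \frakg_\beta \to \frakg_{\alpha+\beta}$ is non-zero in every characteristic, and hence \eqref{eq:Lie_bracket} is an isomorphism of line bundles $L_{\alpha,k} \otimes L_{\beta,k} \xrightarrow{\sim} L_{\alpha+\beta,k}$. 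Under this identification the cup product with $s_\alpha$ becomes literally multiplication by $s_\alpha$.

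The key geometric input is then the short exact sequence
\begin{equation*}
0 \to L_{\beta,k} \xrightarrow{\cdot s_\alpha} L_{\alpha+\beta,k} \to L_{\alpha+\beta,k} \otimes \OO_{D_\alpha} \to 0,
\end{equation*}
together with a degree count on $D_\alpha \cong \PP^1_k$. The identity $(\alpha+\beta, \alpha+\beta) = -2$ in a simply laced system forces $(\alpha,\beta) = 1$, so $L_{\alpha+\beta,k}|_{D_\alpha}$ has degree $(\alpha+\beta, \alpha) = -2 + 1 = -1$. Therefore $L_{\alpha+\beta,k}|_{D_\alpha} \cong \OO_{\PP^1}(-1)$, whose $H^0$ and $H^1$ both vanish.

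The associated long exact sequence now forces $H^i(\tS_k, L_{\beta,k}) \xrightarrow{\cdot s_\alpha} H^i(\tS_k, L_{\alpha+\beta,k})$ to be an isomorphism for both $i = 0$ and $i = 1$. Combined with Lemma~\ref{lem:h1}, which gives that both sides are one-dimensional over $k$, this yields the desired non-vanishing of the cup product in both degrees simultaneously.

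The only step requiring care, rather than a real obstacle, is to verify that the cup product induced by \eqref{eq:Lie_bracket} matches the multiplication-by-$s_\alpha$ map of this short exact sequence under the Lie-bracket identification; once that is laid out, the argument reduces to the single degree count on a $(-2)$-curve.
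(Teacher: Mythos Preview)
Your proposal is correct and follows essentially the same approach as the paper: both use the section of $L_{\alpha,k}$ vanishing along the $(-2)$-curve $D_\alpha$, form the short exact sequence with quotient $L_{\alpha+\beta}|_{D_\alpha}$, compute its degree on $D_\alpha\cong\PP^1_k$ to be $(\alpha,\alpha+\beta)=-1$, and conclude from the vanishing of $H^i(\PP^1_k,\OO(-1))$ that the cup product is an isomorphism for $i=0,1$. Your added remark that the Chevalley structure constant $N_{\alpha,\beta}=\pm 1$ forces \eqref{eq:Lie_bracket} to be an isomorphism in every characteristic is a helpful clarification the paper leaves implicit.
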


\begin{proof}
  Choose a non-zero section
  \begin{equation*}
    s \in H^0(\tS_k, L_{\alpha,k}).
  \end{equation*}
  Then $s$ vanishes precisely on a $(-2)$-curve $D \subset \tS_k$, and the sequence
  \begin{equation} \label{eq:s_cup}
    0 \to L_{\beta,k} \xrightarrow{[s,\_]} L_{\alpha+\beta,k} \to L_{\alpha+\beta|D} \to 0
  \end{equation}
  of coherent sheaves on $\tS_k$ is exact. The line bundle $L_{\alpha+\beta|D}$ on $D \cong \PP^1_k$
  has degree $(\alpha, \alpha+\beta) = -2+1=-1$ since $(\alpha,\beta)=1$; consequently,
  \begin{equation*}
    H^i(D, L_{\alpha+\beta|D}) \cong H^i(\PP^1_k,\OO_{\PP^1_k}(-1)) = 0
  \end{equation*}
  for $i = 0, 1$. In the long exact cohomology sequence resulting from \eqref{eq:s_cup},
  \begin{equation*}
    H^i(\tS_k, L_{\beta,k}) \xrightarrow{[s,\_]} H^i(\tS_k,L_{\alpha+\beta,k})
  \end{equation*}
  is therefore an isomorphism for $i = 0, 1$.
\end{proof}

The next step is to lift our universal torsor $\T$ to a
$B$-torsor $\B$ over $\tS$. We construct $\B$ as follows.

For $\alpha \in \Phi^+$, let $U_\alpha \cong \GaR$ be the associated root group in $B$.
Let $U_{\ge 2}$ be the subgroup of $B$ generated by all $U_\alpha$ with $\alpha \not\in \Delta$,
and put $B_{\le 1} := B/U_{\ge 2}$. We have the exact sequence
\begin{equation}\label{eq:first_seq}
  0 \to U_{=1} := \bigoplus_{\alpha \in \Delta} U_\alpha \to B_{\le 1} \to T \to 0.
\end{equation}
Here $T$ acts on $U_{=1}$ by conjugation. Associated to the $T$-torsor
$\T$ over $\tS$, we thus obtain a fibration over $\tS$ with fiber
$U_{=1}$. This group scheme over $\tS$ is by construction the underlying
additive group scheme of $\bigoplus_{\alpha \in \Delta} L_\alpha$.

We will first lift $\T$ to a $B_{\le 1}$-torsor $\B_{\le 1}$ over
$\tS$.  This is possible because \eqref{eq:first_seq} comes with a
splitting $T \to B_{\le 1}$, and the lifts $\B_{\le 1}$ are parameterized
by
\begin{equation*}
  H^1(\tS, \bigoplus_{\alpha \in \Delta} L_\alpha).
\end{equation*}
To make this precise, we consider the commutative diagram
\begin{equation*}
  \xymatrix{
    0 \ar[r] & \bigoplus_{\alpha \in \Delta} \frakg_\alpha \ar[r]\ar[d]_{\pr_\alpha} & \frakb_{\le 1} = \frakt \oplus \bigoplus_{\alpha \in \Delta} \frakg_\alpha \ar[r]\ar[d]_{\ad_\alpha \oplus \pr_\alpha} & \frakt \ar[r]\ar[d]_{\ad_\alpha} & 0\\
    0 \ar[r] & \frakg_\alpha \ar[r]_-{\big(
    \begin{smallmatrix}
      0\\\id
    \end{smallmatrix}
    \big)} & \End(\frakg_\alpha) \oplus \frakg_\alpha \ar[r]_-{(\id\ 0)} & \End(\frakg_\alpha) \ar[r] & 0
  }
\end{equation*}
of $B_{\le 1}$-modules, where the upper exact sequence consists of the
Lie algebras of (\ref{eq:first_seq}), and $\ad_\alpha$ sends $t \in
\frakt$ to $[t,\_] : \frakg_\alpha \to \frakg_\alpha$. Given one lift
$\B_{\le 1}$, we obtain an associated commutative diagram
\begin{equation}\label{eq:diagram_L_alpha_O_tS}
  \xymatrix{
    0 \ar[r] & \bigoplus_{\alpha \in \Delta} L_\alpha \ar[r]\ar[d] & \ad( \B_{\le 1}) \ar[r]\ar[d] & \Lambda^{\vee} \otimes_{\ZZ} \OO_\tS \ar[r]\ar[d] & 0\\
    0 \ar[r] & L_\alpha \ar[r] & \B_{\le 1} \times^{B_{\le 1}} (\End(\frakg_\alpha) \oplus \frakg_\alpha) \ar[r] & \OO_\tS \ar[r] & 0
    }
\end{equation}
of vector bundles over $\tS$. We denote the extension class of the
lower exact sequence by 
\begin{equation}\label{eq:c_alpha}
  c_\alpha \in H^1(\tS,L_\alpha).
\end{equation}
The classes $c_\alpha$ for $\alpha \in \Delta$ classify the lift
$\B_{\le 1}$ (see \cite[Proposition~3.1.ii]{MR3013030}, for example).

We choose a particular lift $\B_{\le 1}$ such that, for each $\alpha
\in \Delta$, the component $c_\alpha$ of the class of $\B_{\le 1}$
generates $H^1(\tS, L_\alpha)$ as an $R$-module. This is possible by
Lemma~\ref{lem:h1}.

\begin{lemma} \label{lem:B1_unique}
  Let $\B_{\le 1}'$ be another lift of $\T$ such that
  $H^1(\tS, L_\alpha)$ is also generated by the component $c_\alpha'$
  of the class of $\B_{\le 1}'$ for each $\alpha \in \Delta$. Then
  there is an automorphism $\sigma$ of $G$ with $\sigma_{|T} = \id_T$
  such that the extension of structure group of $\B_{\le 1}$ along
  $\sigma_{|B_{\le 1}}: B_{\le 1} \to B_{\le 1}$ is isomorphic to
  $\B_{\le 1}'$ as a lift of $\T$.
\end{lemma}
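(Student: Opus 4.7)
The plan is to take $\sigma$ to be conjugation by a suitable element of the adjoint torus $T/Z(G)$. The crucial input is that $(T/Z(G))(R) = \Hom_{\ZZ}(Q, R^\times)$, where $Q \subset \Lambda$ is the root lattice, and that the simple roots $\Delta$ form a $\ZZ$-basis of $Q$, so scalings on the root spaces $\frakg_\alpha$ for $\alpha \in \Delta$ can be prescribed freely.

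First, by Lemma~\ref{lem:h1} each module $H^1(\tS, L_\alpha) \cong R/\m^{n_\alpha}$ is cyclic, so its two generators $c_\alpha$ and $c_\alpha'$ differ by a unit of $R/\m^{n_\alpha}$, which I lift to a unit $u_\alpha \in R^\times$; thus $c_\alpha' = u_\alpha c_\alpha$ in $H^1(\tS, L_\alpha)$. Second, I take the unique element $t \in (T/Z(G))(R)$ with $\alpha(t) = u_\alpha$ for every $\alpha \in \Delta$, and let $\sigma : G \to G$ be conjugation by $t$ (well defined because $Z(G)$ acts trivially by conjugation). This $\sigma$ fixes $T$ pointwise and acts on each root group $U_\beta$ by multiplication by $\beta(t) \in R^\times$; in particular it preserves $B$ and $U_{\ge 2}$, so it descends to an automorphism of $B_{\le 1}$.

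Finally, I compare the two $B_{\le 1}$-torsors $\B_{\le 1}'$ and $\B_{\le 1} \times^{B_{\le 1}, \sigma} B_{\le 1}$ as lifts of $\T$. The latter is indeed a lift of $\T$ because $\sigma_{|T} = \id_T$, and by naturality of the bottom row of diagram~(\ref{eq:diagram_L_alpha_O_tS}) its classifying class in $H^1(\tS, L_\alpha)$ is the old class scaled by $\alpha(t) = u_\alpha$, hence equal to $c_\alpha'$. Since lifts of $\T$ are classified up to isomorphism by their components $(c_\alpha)_{\alpha \in \Delta}$ via \cite{MR3013030}, as recalled just before the lemma, the two lifts must be isomorphic, which yields the required $\sigma$.

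The main obstacle is Step~2: confirming that any tuple of scalings $(u_\alpha)_{\alpha \in \Delta} \in (R^\times)^\Delta$ is realized by an automorphism of $G$ that fixes $T$ pointwise. The cleanest justification is the identification of the functor of such automorphisms with the adjoint torus $T/Z(G)$, whose character lattice is $Q$ and has $\Delta$ as a $\ZZ$-basis; this relies on the Chevalley commutator relations being respected by rescalings $\frakg_\gamma \mapsto c_\gamma \frakg_\gamma$ exactly when $\gamma \mapsto c_\gamma$ is a character of $Q$. Once this group-theoretic point is in place, Steps~1 and 3 are standard functorial comparisons of extension classes in $H^1$.
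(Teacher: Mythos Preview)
Your proof is correct and follows essentially the same approach as the paper. The paper obtains the existence (and uniqueness) of $\sigma$ by citing \cite[Expos\'e XXIII, Th\'eor\`eme~4.1]{MR2867622}, and then in a subsequent remark spells out exactly your construction of $\sigma$ as conjugation by the unique $t \in T^{\ad}(R)$ with $\alpha(t) = \lambda_\alpha$; you have simply merged that remark into the proof proper.
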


\begin{proof}
  Since $c_\alpha$ and $c_\alpha'$ both generate $H^1(\tS,L_\alpha)$,
  we have $c_\alpha' = \lambda_\alpha c_\alpha$ with
  $\lambda_\alpha \in R^\times$ for each $\alpha \in \Delta$.
  According to \cite[Expos\'e XXIII, Th\'eor\`eme~4.1]{MR2867622},
  there is a unique automorphism $\sigma$ of $G$ with
  $\sigma_{|T} = \id_T$ such that $\sigma$ acts on $\frakg_\alpha$
  as multiplication by $\lambda_\alpha$ for each $\alpha \in \Delta$.
  This implies
  \begin{equation*}
    (\sigma_{|B_{\le 1}})_* \B_{\le 1} \cong \B_{\le 1}'
  \end{equation*}
  as lifts of $\T$.
\end{proof}

\begin{remark}
  This automorphism $\sigma$ of $G$ can be described as follows.
  The action of $G$ on itself by conjugation descends to an action
  of $G^{\ad} := G/Z$ on $G$, where $Z \subset G$ is the
  (scheme-theoretic) center. The subgroup $T^{\ad} := T/Z$ of
  $G^{\ad}$ acts trivially on $T$. Since $\Delta$ is a basis of the
  lattice $\Hom(T^{\ad}, \Gm)$, there is a unique point
  $t \in T^{\ad}(R)$ such that $\alpha(t) = \lambda_\alpha$ for each
  $\alpha \in \Delta$. Conjugation by this point $t$ is the
  required automorphism $\sigma$ of $G$.
\end{remark}

\begin{lemma} \label{lem:lift_to_B}
  The $B_{\le 1}$-torsor $\B_{\le 1}$ can be lifted to a $B$-torsor $\B$ over $\tS$.
\end{lemma}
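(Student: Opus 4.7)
The strategy is to lift $\B_{\le 1}$ step by step through the natural filtration of $B$ by root height. For $n \ge 1$, let $U_{\ge n+1} \subseteq B$ denote the subgroup generated by those root groups $U_\alpha$ with $\alpha \in \Phi^+$ of height at least $n+1$; this specializes to the $U_{\ge 2}$ already defined. Setting $B_{\le n} := B/U_{\ge n+1}$ and letting $N$ be the maximal height of a root in $\Phi^+$, we have $B_{\le N} = B$. For $n \ge 2$, there is a short exact sequence of group schemes over $R$
\begin{equation*}
  1 \to K_n \longrightarrow B_{\le n} \longrightarrow B_{\le n-1} \to 1,
\end{equation*}
with commutative kernel $K_n = U_{\ge n}/U_{\ge n+1} \cong \bigoplus_{\mathrm{ht}(\alpha)=n} U_\alpha$ on which $B_{\le n-1}$ acts through its $T$-quotient via the roots $\alpha$ of height $n$.

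Next I would invoke the standard obstruction theory for torsors under an extension with commutative kernel: given a lift $\B_{\le n-1}$ of $\T$, there is an obstruction class in $H^2(\tS, \B_{\le n-1} \times^{B_{\le n-1}} K_n)$ whose vanishing is equivalent to the existence of a $B_{\le n}$-torsor lifting $\B_{\le n-1}$. Because $B_{\le n-1}$ acts on each $U_\alpha$ through $T$ via the character $\alpha$, and because $\B_{\le n-1}$ induces $\T$ upon extension of structure group along $B_{\le n-1} \twoheadrightarrow T$, the twisted kernel sheaf is canonically identified with $\bigoplus_{\mathrm{ht}(\alpha)=n} L_\alpha$.

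By Lemma~\ref{lem:h1}, $H^2(\tS, L_\alpha) = 0$ for every $\alpha \in \Phi^+$, so each of these obstructions vanishes. Starting from $\B_{\le 1}$ and iterating, we produce successive lifts $\B_{\le 2}, \B_{\le 3}, \dots, \B_{\le N}$, and define $\B := \B_{\le N}$, which is the desired $B$-torsor. The only conceptual point requiring care is the identification of the twisted kernel with $\bigoplus L_\alpha$; once that is in place, the argument is a clean induction on $n$, and the main vanishing input $H^2(\tS, L_\alpha)=0$ has already been supplied by Lemma~\ref{lem:h1}.
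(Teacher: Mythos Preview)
Your proof is correct and follows essentially the same route as the paper: filter $B$ by root height, identify the successive kernels $U_{\ge n}/U_{\ge n+1}$ with $\bigoplus_{\alpha \in \Phi^+_{=n}} U_\alpha$ on which the action factors through $T$, and use the vanishing $H^2(\tS, L_\alpha) = 0$ from Lemma~\ref{lem:h1} to kill the obstruction at each step. The paper's argument is the same induction, with the identification of the twisted kernel justified via the inclusion $[U_{\ge 1}, U_{\ge n}] \subset U_{\ge n+1}$.
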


\begin{proof}
  Let $\Phi^+_{=n}$ (resp.\ $\Phi^+_{\ge n}$) be the set of all $\alpha \in \Phi^+$
  that are sums of precisely (resp.\ at least) $n$ not necessarily distinct simple
  roots. Generalizing the above notation, we let $U_{\ge n}$ be the subgroup of $B$ generated by all
  $U_\alpha$ with $\alpha \in \Phi^+_{\ge n}$, and put $B_{\le n} := B/U_{\ge n+1}$.
  We have the exact sequences
  \begin{equation}\label{eq:next_seq}
    0 \to U_{=n}:=\bigoplus_{\alpha \in \Phi^+_{=n}} U_\alpha \to B_{\le n} \to B_{\le n-1} \to 0,
  \end{equation}
  in which $B_{\le n} = B/U_{\ge n+1}$ acts on $U_{=n}$ by conjugation.
  Here $U_{\ge 1}/U_{\ge n+1}$ acts trivially, because $[U_{\ge 1},U_{\ge n}] \subset U_{\ge n+1}$.
  Therefore, the action descends to an action of $B/U_{\ge 1} = T$ on $U_{=n}$.
  Associated to the $T$-torsor $\T$ over $\tS$, we thus obtain a fibration over $\tS$, with fiber
  $U_{=n}$. This group scheme over $\tS$ is by construction the underlying
  additive group scheme of $\bigoplus_{\alpha \in \Phi^+_{=n}} L_\alpha$.

  Using induction, we assume that $\B_{\le 1}$ can be lifted
  to a $B_{\le n-1}$-torsor $\B_{\le n-1}$ for some $n \ge 2$.
  We try to lift $\B_{\le n-1}$ to a $B_{\le n}$-torsor $\B_{\le n}$
  along the exact sequence \eqref{eq:next_seq}. The obstruction against
  such a lift is an element in
  \begin{equation*}
    H^2(\tS, \bigoplus_{\alpha \in \Phi^+_{=n}} L_\alpha)
  \end{equation*}
  (see \cite[Proposition~3.1.i]{MR3013030}). This cohomology vanishes by Lemma~\ref{lem:h1}.

  For sufficiently large $n$, we have $B_{\le n} = B$, and $\B := \B_{\le n}$
  is the required lift of $\B_{\le 1}$.
\end{proof}

According to \cite[12.12]{MR1642713}, we can choose a nonzero element
$x_\alpha \in \frakg_\alpha$ for each $\alpha \in \Phi^+$ such that
\begin{equation}\label{eq:epsilon_relation}
[x_\alpha,x_\beta] = \epsilon_{\alpha,\beta} x_\gamma
\end{equation}
with $\epsilon_{\alpha,\beta} \in \{-1,1\}$ for all
$\alpha,\beta,\gamma \in \Phi^+$ with $\alpha+\beta=\gamma$.

\begin{lemma}\label{lem:e_alpha-f_alpha}
  There are classes $e_\alpha \in H^0(\tS_k, L_{\alpha,k})$ for
  $\alpha \in \Phi^+$ and $f_\alpha \in H^1(\tS_k,L_{\alpha,k})$ for
  $\alpha \in \Phi^+$ such that
  \begin{enumerate}
  \item[(i)] $[e_\alpha, e_\beta] = \epsilon_{\alpha,\beta} e_\gamma$
    for all $\alpha,\beta,\gamma \in \Phi^+$ with
    $\alpha+\beta=\gamma$,
  \item[(ii)] $[e_\alpha, f_\beta] = \epsilon_{\alpha,\beta} f_\gamma$
    for all $\alpha,\beta,\gamma \in \Phi^+$ with
    $\alpha+\beta=\gamma$, and
  \item[(iii)] $f_\alpha = c_{\alpha,k}$ is the restriction of the class $c_\alpha$ from
    (\ref{eq:c_alpha}) for all $\alpha \in \Delta$.
  \end{enumerate}
\end{lemma}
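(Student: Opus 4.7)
The plan is to construct $e_\gamma$ and $f_\gamma$ by induction on the height of $\gamma \in \Phi^+$. For $\alpha \in \Delta$, the space $H^0(\tS_k, L_{\alpha,k})$ is one-dimensional over $k$ by the proof of Lemma~\ref{lem:h1}, so I take any nonzero $e_\alpha$, and set $f_\alpha := c_{\alpha,k}$; this generates $H^1(\tS_k, L_{\alpha,k}) \cong k$ because $c_\alpha$ generates $H^1(\tS, L_\alpha)$ by the choice of $\B_{\le 1}$ and the base change map (\ref{eq:base_change}) is an isomorphism, making (iii) hold. For $\gamma \in \Phi^+$ of height $\ge 2$, Lemma~\ref{lem:root_sequence} produces a decomposition $\gamma = \alpha + \beta$ with $\alpha \in \Delta$ and $\beta \in \Phi^+$ of lower height, and I set $e_\gamma := \epsilon_{\alpha,\beta}^{-1}[e_\alpha, e_\beta]$ and $f_\gamma := \epsilon_{\alpha,\beta}^{-1}[e_\alpha, f_\beta]$; both are nonzero by Lemma~\ref{lem:cup} and hence generate the one-dimensional spaces $H^0(\tS_k, L_{\gamma,k})$ and $H^1(\tS_k, L_{\gamma,k})$.

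The substantive issue is then to verify (i) and (ii) for \emph{all} decompositions $\gamma = \alpha + \beta$ with $\alpha, \beta \in \Phi^+$, not only for the simple-plus-positive decomposition used in the construction. For (i), regard $V := \bigoplus_{\alpha \in \Phi^+} H^0(\tS_k, L_{\alpha,k})$ as a graded Lie algebra via the bracket (\ref{eq:Lie_bracket}). The assignment $x_\alpha \mapsto e_\alpha$ for $\alpha \in \Delta$ extends uniquely to a graded Lie algebra homomorphism $\psi : \bigoplus_{\alpha \in \Phi^+} \frakg_{\alpha,k} \to V$, because in simply laced type every Serre relation among the simple generators lives in a degree outside $\Phi^+$ and is therefore automatic from the $\Phi^+$-grading of $V$. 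Lemma~\ref{lem:cup} together with induction on height shows $\psi$ is nonzero on each graded piece, and dimension count makes it a graded isomorphism. Then $e_\gamma = \psi(x_\gamma)$ agrees with the inductive definition, and property (i) follows from $\psi$ being a Lie algebra homomorphism.

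For (ii), the hardest step, I proceed by induction on the height of $\gamma$ and establish the relation for all decompositions at once. For $\alpha \in \Phi^+$ not simple, write $\alpha = \alpha_1 + \alpha_2$ with $\alpha_1 \in \Delta$ and $\alpha_2 \in \Phi^+$, and expand via property (i) together with the graded Jacobi identity in the pairing $V \otimes W \to W$, where $W := \bigoplus_{\alpha \in \Phi^+} H^1(\tS_k, L_{\alpha,k})$: this gives $[e_\alpha, f_\beta] = \epsilon_{\alpha_1,\alpha_2}^{-1}\bigl([e_{\alpha_1}, [e_{\alpha_2}, f_\beta]] - [e_{\alpha_2}, [e_{\alpha_1}, f_\beta]]\bigr)$, where the inner brackets fall under the inductive hypothesis for (ii). The resulting combination of structure constants collapses to $\epsilon_{\alpha,\beta} f_\gamma$ via the Chevalley identity $\epsilon_{\alpha_1,\alpha_2}\epsilon_{\alpha_1+\alpha_2,\beta} = \epsilon_{\alpha_2,\beta}\epsilon_{\alpha_1,\alpha_2+\beta} - \epsilon_{\alpha_1,\beta}\epsilon_{\alpha_2,\alpha_1+\beta}$, which is itself the Jacobi identity for the structure constants in $\frakg$. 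The most delicate point is the well-definedness of $f_\gamma$ when two distinct simple roots $\alpha_1, \alpha_2 \in \Delta$ both furnish valid decompositions of a height-two element $\gamma$; this amounts to a specific cohomological identity in the one-dimensional space $H^1(\tS_k, L_{\gamma,k})$, which I expect to follow from the compatibility provided by the existence of a $B$-torsor lift (Lemma~\ref{lem:lift_to_B}) combined with the vanishing $H^2(\tS_k, L_{\gamma,k}) = 0$ from Lemma~\ref{lem:h1}.
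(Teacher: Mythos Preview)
Your argument for (i) via the graded Lie algebra homomorphism $\psi\colon\fraku^+_k\to V$ is correct and clean. The gap is in (ii), and you have essentially flagged it yourself: the well-definedness of $f_\gamma$ for height $\ge 2$ fails with your choices. Concretely, if $\gamma=\alpha_1+\alpha_2$ with $\alpha_1,\alpha_2\in\Delta$ adjacent, your two candidate definitions agree iff
\[
[e_{\alpha_1},c_{\alpha_2,k}]+[e_{\alpha_2},c_{\alpha_1,k}]=0
\quad\text{in }H^1(\tS_k,L_{\gamma,k}).
\]
But both summands are \emph{nonzero} by Lemma~\ref{lem:cup}, and rescaling $e_{\alpha_1}\mapsto\mu e_{\alpha_1}$ scales the first summand by $\mu$ while leaving the second fixed. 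Since you chose the $e_\alpha$ for $\alpha\in\Delta$ independently (``take any nonzero $e_\alpha$''), the identity has no reason to hold; it is a constraint linking the simple $e_\alpha$'s to the $c_{\alpha,k}$'s, and you have not imposed it. Your proposed rescue via Lemma~\ref{lem:lift_to_B} does not help: the $B$-torsor lift exists for \emph{every} choice of $\B_{\le 1}$, hence for every tuple $(c_\alpha)_{\alpha\in\Delta}$, so its existence cannot encode any relation among the $c_\alpha$'s (and certainly not one that also involves your arbitrary $e_\alpha$'s).

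The paper resolves this by normalising at a geometric point. One picks $p\in\tS_k$ off the $(-2)$-curves and a point $\tau\in\T_k$ over $p$; this trivialises each $L_{\alpha,p}\cong\frakg_{\alpha,k}$, and $e_\alpha$ is the unique section with value $x_\alpha$ at $p$. Then (i) holds on the nose by (\ref{eq:epsilon_relation}). For the $f$'s, instead of building upward from the simple roots, one builds \emph{downward} from the highest root $\delta$ of each component: choose $f_\delta\neq 0$, and for each $\alpha$ let $\phi_\alpha\colon L_{\alpha,k}\to L_{\delta,k}$ be the unique (up to scalar, pinned down at $p$) nonzero map; then $f_\alpha$ is the preimage of $f_\delta$ under the isomorphism $H^1(L_{\alpha,k})\to H^1(L_{\delta,k})$ (Lemma~\ref{lem:root_sequence} + Lemma~\ref{lem:cup}). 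Property (ii) then reduces to checking one commuting triangle of maps into $L_{\delta,k}$, verified at $p$. Finally one rescales both $e_\alpha$ and $f_\alpha$ by a multiplicative character $\lambda_\alpha$ to force (iii). The point is that the trivialisation at $p$ supplies exactly the compatibility among the simple $e_\alpha$'s that your inductive scheme needs but does not arrange.
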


\begin{proof}
  We choose a point $p \in \tS(k)$ outside the $(-2)$-curves, and a point
  $\tau \in \T(k)$ above $p$. The trivialization $\tau$ of $\T$ above $p$ induces an
  isomorphism $L_{\alpha,p} \to \frakg_\alpha$ for each $\alpha \in
  \Phi^+$. We define $e_\alpha \in H^0(\tS_k, L_{\alpha,k})$ as the unique
  section whose value at $p$ maps to $x_\alpha$ under this isomorphism. Then
  (i) holds by construction because of (\ref{eq:epsilon_relation}).

  For each irreducible component of $\Phi$, consider its highest root
  $\delta$, and choose a nonzero $f_\delta \in H^1(\tS_k, L_{\delta,k})$. Let
  $\alpha$ be a positive root in the same component. Since the anticanonical
  morphism $\tS_k \to S_k$ is birational, there are only finitely many curves
  on $\tS_k$ whose intersection number with $-K_\tS$ is $0$. But every curve
  of class $\delta-\alpha$ has this property, which implies
  \begin{equation*}
    \dim \Hom(L_{\alpha,k}, L_{\delta,k}) \le 1.
  \end{equation*}
  On the other hand, the divisor class $\delta-\alpha$ contains a sum of
  $(-2)$-curves.  Hence there is a unique morphism
  $\phi_\alpha : L_{\alpha,k} \to L_{\delta,k}$ whose restriction to $p$
  sends $x_\alpha$ to $x_\delta$. The induced map
  \begin{equation*}
    H^1(\tS_k, L_{\alpha,k}) \to H^1(\tS_k, L_{\delta,k})
  \end{equation*}
  is bijective because of Lemma~\ref{lem:root_sequence} and
  Lemma~\ref{lem:cup}. We define
  \begin{equation*}
    f_\alpha \in H^1(\tS_k, L_{\alpha,k})
  \end{equation*}
  as the inverse image of $f_\delta$.

  Assume that $\alpha+\beta=\gamma$ with $\beta,\gamma \in \Phi^+$.  Then
  $\beta$ and $\gamma$ lie in the same irreducible component of $\Phi$ as
  $\alpha$ and $\delta$. The diagram
  \begin{equation*}
    \xymatrix{L_\beta \ar[rr]^{\epsilon^{-1}_{\alpha,\beta}[e_\alpha,\_]} \ar[rd]_{\phi_\beta}
      && L_\gamma \ar[dl]^{\phi_\gamma}\\
      & L_\delta }
  \end{equation*}
  commutes, as can be seen by evaluating at $p$ and using
  $\epsilon_{\alpha,\beta}^{-1}[x_\alpha,x_\beta] = x_\gamma$. Therefore,
  $\epsilon_{\alpha,\beta}^{-1}[e_\alpha,f_\beta] = f_\gamma$, which proves (ii).

  We have $c_{\alpha, k} = \lambda_\alpha f_\alpha$ with
  $\lambda_\alpha \in k^\times$ for each $\alpha \in \Delta$
  by construction. For $\alpha = \sum_i \alpha_i$ with all
  $\alpha_i \in \Delta$, we put
  $\lambda_\alpha := \prod_i \lambda_{\alpha_i} \in k^\times$.
  Replacing $e_\alpha$ by $\lambda_\alpha e_\alpha$ and
  $f_\alpha$ by $\lambda_\alpha f_\alpha$ for $\alpha \in \Phi^+$
  preserves (i) and (ii) and ensures (iii).
\end{proof}

Recall from \cite[I.4]{MR0268192} that a prime $p$ is \emph{good} for
an irreducible root system if $p$ does not occur as a coefficient of
the highest root. A good prime $p$ is \emph{very good} if moreover $p$
does not divide the determinant of the Cartan matrix. For the simply
laced root systems, the very good primes are as follows.
\begin{equation}\label{eq:good_primes}
  \text{$\mathbf{A}_r$: all $p \nmid r+1$,}\quad 
  \text{$\mathbf{D}_r$: all $p\ne 2$,}\quad
  \text{$\mathbf{E}_6, \mathbf{E}_7$: all $p \ne 2,3$,}\quad
  \text{$\mathbf{E}_8$: all $p \ne 2,3,5$.}
\end{equation}
The field $k$ has \emph{very good characteristic} for $\Phi$ if $\Char(k)$
is $0$ or a very good prime for every irreducible component of $\Phi$.

\begin{lemma} \label{lem:cup_epi}
  Given $\beta \in \Phi^+_{=n-1}$ and $\gamma \in \Phi^+_{=n}$, we consider the map
  \begin{equation}\label{eq:cup_c_alpha}
    H^0( \tS_k, L_{\beta, k}) \to H^1( \tS_k, L_{\gamma, k})
  \end{equation}
  given by $[\_, c_{\alpha,k}]$ if $\gamma - \beta$ is a simple root $\alpha$,
  and by $0$ otherwise. If $k$ has very good characteristic for $\Phi$, then the
  sum
  \begin{equation}\label{eq:cup_c}
    \bigoplus_{\beta \in \Phi^+_{=n-1}} H^0( \tS_k, L_{\beta, k}) \to
    \bigoplus_{\gamma \in \Phi^+_{=n}} H^1( \tS_k, L_{\gamma, k})
  \end{equation}
  of all these maps is surjective for $n \ge 2$.
\end{lemma}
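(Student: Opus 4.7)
The plan is to reinterpret the map \eqref{eq:cup_c} as the adjoint action of a principal nilpotent element on the height-graded pieces of the Lie algebra $\frakg_k$, and then to invoke the standard structural result about regular nilpotents in very good characteristic.

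First, I would use the bases $\{e_\beta\}_{\beta \in \Phi^+}$ of $H^0(\tS_k, L_{\beta,k})$ and $\{f_\gamma\}_{\gamma \in \Phi^+}$ of $H^1(\tS_k, L_{\gamma,k})$ from Lemma \ref{lem:e_alpha-f_alpha}, the identity $c_{\alpha,k} = f_\alpha$ for $\alpha \in \Delta$ (part (iii)), and the commutator formula $[e_\beta, f_\alpha] = \epsilon_{\beta,\alpha} f_{\beta+\alpha}$ (part (ii)) to identify \eqref{eq:cup_c} (up to overall signs) with
\begin{equation*}
  \ad(e) : \bigoplus_{\beta \in \Phi^+_{=n-1}} \frakg_{\beta,k} \to \bigoplus_{\gamma \in \Phi^+_{=n}} \frakg_{\gamma,k},
\end{equation*}
where $e := \sum_{\alpha \in \Delta} x_\alpha \in \frakg_k$ is a regular (principal) nilpotent. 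Surjectivity of \eqref{eq:cup_c} is thereby reduced to surjectivity of $\ad(e)$ between the height-$(n-1)$ and height-$n$ parts of the positive nilradical of $\frakg_k$.

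The key structural fact to invoke is that in very good characteristic, the Jordan block sizes of $\ad(e)$ on $\frakg_k$ coincide with those in characteristic zero, namely $2m_1 + 1, \dots, 2m_r + 1$, where $m_1, \dots, m_r$ are the exponents of $\Phi$. Equivalently, the centralizer $\frakg_k^e$ has dimension equal to the rank $r$ of $\Phi$, and its intersection with $\bigoplus_{\gamma \in \Phi^+_{=m}} \frakg_{\gamma,k}$ is one-dimensional precisely when $m$ equals some exponent $m_i$. Combined with the classical identity $|\Phi^+_{=h}| = |\{ i : m_i \ge h \}|$, this gives
\begin{equation*}
  \dim \ker \bigl( \ad(e) \vert_{\bigoplus_{\beta \in \Phi^+_{=n-1}} \frakg_{\beta,k}} \bigr) = |\{ i : m_i = n-1 \}| = \sum_{\beta \in \Phi^+_{=n-1}} \dim \frakg_{\beta,k} - \sum_{\gamma \in \Phi^+_{=n}} \dim \frakg_{\gamma,k},
\end{equation*}
so $\ad(e)$ is surjective onto the height-$n$ piece for every $n \ge 2$.

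The main obstacle is the assertion about the Jordan form of $\ad(e)$ in positive characteristic $p$. In very good characteristic this is classical, going back to Springer--Steinberg's work on regular elements in reductive groups; in bad characteristic (e.g.\ $p = 2$ for $\mathbf{D}_r$, or $p$ dividing the determinant of the Cartan matrix) it can genuinely fail, and this failure is precisely what underlies the $\mathbf{D}_4$ example in characteristic $2$ mentioned in the introduction. I would appeal to a standard reference such as Jantzen's lecture notes on nilpotent orbits, or Carter's book on finite groups of Lie type, for the required structural statement on principal nilpotents in very good characteristic.
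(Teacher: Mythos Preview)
Your proposal is correct and takes a genuinely different route from the paper. The paper's own proof simply writes down the matrix of \eqref{eq:cup_c} in the bases $\{e_\beta\}$ and $\{f_\gamma\}$---with entry $\epsilon_{\beta,\alpha}$ when $\gamma-\beta=\alpha\in\Delta$ and $0$ otherwise---and then verifies surjectivity by \emph{computing the rank of this matrix case by case} for every irreducible simply-laced root system and every height $n$. Your identification of this matrix with the height-graded piece of $\pm\ad(e)$ for $e=\sum_{\alpha\in\Delta}x_\alpha$ is correct, and your reduction to the structural fact that the centralizer of the principal nilpotent has dimension $r$ with the Kostant height-grading (which then forces surjectivity via the semicontinuity-plus-total-dimension argument and the identity $|\Phi^+_{=h}|=|\{i:m_i\ge h\}|$) is sound. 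Your approach is more conceptual and uniform, and it explains \emph{why} very good characteristic is the natural hypothesis; the paper's computation is entirely self-contained but gives no such insight.

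Two small points to tighten. First, the group $G$ in this paper is reductive rather than semisimple, so $\frakg_k$ has a nontrivial central summand in $\frakt_k$ and $\dim\frakg_k^e$ is strictly larger than $r$; since the map \eqref{eq:cup_c} only involves heights $\ge 1$, you should phrase the needed input as $\dim\fraku_k^e=r$ (or pass to the derived subalgebra), after which your semicontinuity argument goes through verbatim. Second, your assertion that the \emph{full} Jordan block structure of $\ad(e)$ is preserved is stronger than what the argument uses (and is not obviously true once some $2m_i+1$ exceeds $p$); you only need the graded kernel dimensions, and these already follow from $\dim\fraku_k^e=r$ plus semicontinuity, so I would drop the Jordan-form formulation and go directly through the centralizer.
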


\begin{proof}
  Choose $e_\alpha,f_\alpha$ as in Lemma~\ref{lem:e_alpha-f_alpha}.  Given
  $\beta \in \Phi^+_{=n-1}$ and $\gamma \in \Phi^+_{=n}$ with
  $\gamma-\beta=\alpha \in \Delta$, the map \eqref{eq:cup_c_alpha} is given by
  the $(1\times 1)$-matrix $(\epsilon_{\beta,\alpha})$ with respect
  to the bases $\{e_\beta\}$ and $\{f_\gamma\}$ by
  Lemma~\ref{lem:e_alpha-f_alpha}(ii).

  Therefore, the matrix of \eqref{eq:cup_c} with respect to the bases
  $\{e_\beta \mid \beta \in \Phi^+_{=n-1}\}$ and
  $\{f_\gamma \mid \gamma \in \Phi^+_{=n}\}$ has entries
  $\epsilon_{\beta,\alpha}$ whenever $\gamma-\beta=\alpha \in \Delta$, and $0$
  otherwise.
  If $\Phi$ is reducible, then these matrices are block
  diagonal. Computing the ranks of all these matrices for all possible
  irreducible root systems shows that the maps in question are
  surjective in very good characteristic.
\end{proof}

\begin{proposition}\label{prop:B_unique} 
  Assume that $k$ has very good characteristic for $\Phi$. Let the
  $B$-torsor $\B$ over $\tS$ be an arbitrary lift of $\B_{\le 1}$.
  \begin{itemize}
   \item[(i)] Up to isomorphism of torsors, the restriction
    $\B_k := \B_{|\tS_k}$ does not depend on the choice of $\B$.
   \item[(ii)] The adjoint vector bundle $\ad( \B_k) \to \tS_k$
    satisfies $H^1(\tS_k, \ad( \B_k) ) = 0$.
  \end{itemize}
\end{proposition}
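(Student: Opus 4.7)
The strategy is a simultaneous induction along the filtration of $B$ by the normal subgroups $U_{\ge n+1}$, with successive quotients $B_{\le n} = B/U_{\ge n+1}$ as in the proof of Lemma~\ref{lem:lift_to_B}. The key input at each step is the surjectivity from Lemma~\ref{lem:cup_epi}, which relies on the very good characteristic assumption.

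For part~(i), I would show by induction on $n \ge 1$ that any two $B_{\le n}$-torsor lifts of $\B_{\le 1}$ over $\tS_k$ are isomorphic as $B_{\le n}$-torsors. The case $n=1$ is trivial. For the step, given two such lifts $\B_{\le n, k}$ and $\B'_{\le n, k}$, the induction hypothesis furnishes an isomorphism between their $B_{\le n-1}$-projections; pulling $\B'_{\le n, k}$ back through this isomorphism reduces me to the case where both torsors lift a common $B_{\le n-1}$-torsor $\B_{\le n-1, k}$, and their difference as $B_{\le n}$-lifts is a class
\begin{equation*}
  \delta \in H^1\bigl(\tS_k, \bigoplus_{\gamma \in \Phi^+_{=n}} L_{\gamma, k}\bigr).
\end{equation*}
Global sections of $\bigoplus_{\beta \in \Phi^+_{=n-1}} L_{\beta, k}$ integrate to automorphisms of $\B_{\le n-1, k}$ coming from the deepest piece $U_{=n-1}$ of its unipotent radical, and these act on the set of $B_{\le n}$-lifts. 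A \v{C}ech-cocycle computation, using that conjugation by $U_{=n-1}$ in $B_{\le n}$ shifts the $U_{=n}$-component of a cocycle via the Lie bracket with its $U_{=1}$-component, identifies the infinitesimal action on $\delta$ with the sum of cup products
\begin{equation*}
  \bigoplus_{\beta \in \Phi^+_{=n-1}} H^0(\tS_k, L_{\beta, k})
     \to \bigoplus_{\gamma \in \Phi^+_{=n}} H^1(\tS_k, L_{\gamma, k}),
  \quad s_\beta \mapsto \sum_{\alpha \in \Delta} [s_\beta, c_{\alpha, k}],
\end{equation*}
with the classes $c_{\alpha, k}$ from~(\ref{eq:c_alpha}). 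This is surjective by Lemma~\ref{lem:cup_epi}, so $\delta$ can be cancelled, and $\B_{\le n, k} \cong \B'_{\le n, k}$ as $B_{\le n}$-torsors.

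For part~(ii), I filter $\ad(\B_k)$ by the subbundles $\ad_{\ge m}(\B_k)$ attached to the filtration $\frakb_{\ge m} := \bigoplus_{\alpha \in \Phi^+_{\ge m}} \frakg_\alpha$ of $\frakb$, and set $\ad_{<m}(\B_k) := \ad(\B_k)/\ad_{\ge m}(\B_k)$. I would prove $H^1(\tS_k, \ad_{<m}(\B_k)) = 0$ by induction on $m$. For $m = 1$ the bundle $\ad_{<1}(\B_k) = \frakt \otimes_k \OO_{\tS_k}$ is trivial, and $H^1$ vanishes since $\tS_k$ is rational. For the step, the long exact cohomology sequence of
\begin{equation*}
  0 \to \bigoplus_{\alpha \in \Phi^+_{=m}} L_{\alpha, k} \to \ad_{<m+1}(\B_k) \to \ad_{<m}(\B_k) \to 0,
\end{equation*}
combined with $H^1(\ad_{<m}(\B_k)) = 0$ from the induction hypothesis, reduces $H^1(\ad_{<m+1}(\B_k)) = 0$ to surjectivity of the connecting homomorphism $H^0(\ad_{<m}(\B_k)) \to H^1(\bigoplus_\alpha L_{\alpha, k})$. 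The extension class of the above short exact sequence is controlled by the Lie brackets $[\frakg_\alpha, \frakg_\beta] \to \frakg_{\alpha + \beta}$ twisted by the classes $c_{\alpha, k}$. Because all deeper $H^1$'s already vanish, the sections of $L_{\beta, k}$ for $\beta \in \Phi^+_{=m-1}$ lift to $H^0(\ad_{<m}(\B_k))$, and the restriction of the connecting map to these sections is again the sum of cup products of Lemma~\ref{lem:cup_epi}, hence surjective.

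The main technical obstacle, in both parts, is the concrete identification of the abstract obstructions — the action of $U_{=n-1}$-automorphisms on lifts in~(i), and the connecting homomorphism in~(ii) — with the cup-product maps appearing in Lemma~\ref{lem:cup_epi}. Once this bookkeeping is settled, the very-good-characteristic surjectivity of that lemma takes care of each inductive step.
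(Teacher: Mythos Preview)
Your overall strategy matches the paper's: induction along the filtration $B_{\le n}$, reducing both statements to the surjectivity furnished by Lemma~\ref{lem:cup_epi}. Part~(i) is fine. In part~(ii) there is a genuine gap at the very first inductive step.

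The step from $m$ to $m+1$ in your argument for~(ii) invokes sections of $L_{\beta,k}$ with $\beta \in \Phi^+_{=m-1}$ and applies Lemma~\ref{lem:cup_epi} with $n=m$. But that lemma is stated (and only holds) for $n \ge 2$. When $m=1$, the set $\Phi^+_{=0}$ is empty, so your restricted connecting map is the zero map and you get no surjectivity this way. What you actually need at this step is the surjectivity of
\[
  \Lambda^\vee \otimes_{\ZZ} k \;=\; H^0(\tS_k,\ad_{<1}(\B_k)) \;\longrightarrow\; \bigoplus_{\alpha \in \Delta} H^1(\tS_k, L_{\alpha,k}),
\]
and this requires a different argument: one checks that the map factors as $(\alpha\otimes\id_k)_{\alpha\in\Delta}$ followed by multiplication by the generators $c_{\alpha,k}$, and that the former is surjective because its restriction to the coroot lattice is the Cartan matrix, which is invertible over $k$ precisely by the \emph{very good} characteristic assumption. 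This is where the distinction between ``good'' and ``very good'' enters; Lemma~\ref{lem:cup_epi} alone does not see the determinant of the Cartan matrix.

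A minor remark: in your inductive step you say sections of $L_{\beta,k}$ lift to $H^0(\ad_{<m}(\B_k))$ ``because all deeper $H^1$'s already vanish''. No lifting is needed: since $U_{=m-1}$ is normal in $B_{\le m-1}$ with the conjugation action factoring through $T$, the bundle $\bigoplus_{\beta\in\Phi^+_{=m-1}} L_{\beta,k}$ sits inside $\ad_{<m}(\B_k)$ as a subbundle, and sections include directly. This is how the paper phrases it.
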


\begin{proof}
  Considering the restricted $B_{\le n}$-torsor
  \begin{equation*}
    \B_{\le n, k} := \B_{\le n|\tS_k}
  \end{equation*}
  over $\tS_k$, we argue by induction over $n$.
 
  For the proof of (ii), the construction of $\B_{\le 1}$ provides us
  with an exact sequence
  \begin{equation*}
    0 \to \bigoplus_{\alpha \in \Delta} L_{\alpha, k} \to \ad( \B_{\le 1, k})
      \to \Lambda^{\vee} \otimes_{\ZZ} \OO_{\tS_k} \to 0
  \end{equation*}
  of vector bundles over $\tS_k$. Since $H^0( \tS_k, \OO_{\tS_k}) = k$ and $H^1( \tS_k, \OO_{\tS_k}) = 0$,
  the resulting long exact cohomology sequence reads
  \begin{equation} \label{eq:H^1(ad_B1)}
    \Lambda^{\vee} \otimes_{\ZZ} k
      \xrightarrow{\delta} \bigoplus_{\alpha \in \Delta} H^1( \tS_k, L_{\alpha, k})
      \to H^1( \tS_k, \ad( \B_{\le 1, k})) \to 0.
  \end{equation}
  The connecting homomorphism $\delta$ is the sum over $\alpha \in \Delta$ of the compositions
  \begin{equation*}
    \Lambda^{\vee} \otimes_{\ZZ} k \xrightarrow{\alpha \otimes \id_k} k
      \xrightarrow{\_ \cdot c_{\alpha, k}} H^1( \tS_k, L_{\alpha, k}).
  \end{equation*}
  This composition is surjective, because $c_{\alpha, k}$ generates $H^1( \tS_k, L_{\alpha, k})$
  for each $\alpha \in \Delta$ and
  \begin{equation*}
    (\alpha \otimes \id_k)_{\alpha \in \Delta} : \Lambda^{\vee} \otimes_{\ZZ} k
      \to \bigoplus_{\alpha \in \Delta} k
  \end{equation*}
  is surjective (since its restriction to the span of the coroots in $\Lambda^{\vee}$
  is given by the Cartan matrix, which is invertible in $k$ by assumption).
  Consequently,
  \begin{equation*}
    H^1( \tS_k, \ad( \B_{\le 1, k})) = 0
  \end{equation*}
  according to the exact sequence \eqref{eq:H^1(ad_B1)}.

  By induction, we may assume the same for $\B_{\le n-1, k}$.
  We again have an exact sequence
  \begin{equation}\label{eq:ad_sequence}
    0 \to \bigoplus_{\gamma \in \Phi^+_{=n}} L_{\gamma, k}
      \to \ad( \B_{\le n, k}) \to \ad( \B_{\le n-1, k}) \to 0
  \end{equation}
  of vector bundles over $\tS_k$. Using the resulting long exact cohomology sequence,
  it remains to prove that its connecting homomorphism
  \begin{equation}\label{eq:ad_sequence_connecting}
    H^0( \tS_k, \ad( \B_{\le n-1, k}))
      \to \bigoplus_{\gamma \in \Phi^+_{=n}} H^1( \tS_k, L_{\gamma, k})
  \end{equation}
  is surjective. We consider its restriction
  \begin{equation}\label{eq:delta}
    \delta : \bigoplus_{\beta \in \Phi^+_{=n-1}} H^0( \tS_k, L_{\beta, k}) \to
    \bigoplus_{\gamma \in \Phi^+_{=n}} H^1( \tS_k, L_{\gamma, k})
  \end{equation}
to the subbundle
  \begin{equation*}
    \bigoplus_{\beta \in \Phi^+_{=n-1}} L_{\beta, k} \subset \ad( \B_{\le n-1, k}).
  \end{equation*}
  Choose $\beta \in \Phi^+_{=n-1}$ and $\gamma \in \Phi^+_{=n}$ such
  that $\gamma-\beta = \alpha \in \Delta$. Then the component 
  \begin{equation}\label{eq:delta_component}
    H^0( \tS_k, L_{\beta, k}) \to H^1( \tS_k, L_{\gamma, k})
  \end{equation}
  of $\delta$ is the connecting homomorphism of
  the exact sequence
  \begin{equation}\label{eq:sequence_L_gamma_L_beta}
    0 \to L_\gamma \to \B_{\le 1} \times^{B_{\le 1}} (\frakg_\beta \oplus \frakg_\gamma) \to L_\beta \to 0
  \end{equation}
  of vector bundles over $\tS$ associated with the exact sequence
  \begin{equation*}
    0 \to \frakg_\gamma \to \frakg_\beta \oplus \frakg_\gamma \to \frakg_\beta \to 0
  \end{equation*}
  of $B_{\le 1}$-modules. Let $-[\_,\_] : \frakg_\beta \otimes
  \frakg_\alpha \to \frakg_\gamma$ be the linear map that sends
  $x_\beta \otimes x_\alpha$ to
  $-[x_\beta,x_\alpha]=[x_\alpha,x_\beta]$. Then the isomorphism
  \begin{equation*}
    \xymatrix{
      0 \ar[r] & \frakg_\beta \otimes \frakg_\alpha \ar[r]^-{\big(
    \begin{smallmatrix}
      0\\\id
    \end{smallmatrix}
    \big)}\ar[d]_{-[\_,\_]} & \frakg_\beta \oplus (\frakg_\beta \otimes \frakg_\alpha) \ar[r]^-{(\id\ 0)}\ar[d]_{
        \big(\begin{smallmatrix}
          \id & 0 \\ 0 & -[\_,\_]
        \end{smallmatrix}\big)
} & \frakg_\beta \ar[r]\ar[d]_{\id} & 0 \\
      0 \ar[r] & \frakg_\gamma \ar[r]_-{\big(
    \begin{smallmatrix}
      0\\\id
    \end{smallmatrix}
    \big)} & \frakg_\beta \oplus \frakg_\gamma \ar[r]_-{(\id\ 0)} & \frakg_\beta \ar[r] & 0
    }
  \end{equation*}
  of short exact sequences is $B_{\le 1}$-equivariant. Therefore, it
  induces the isomorphism
  \begin{equation*}
    \xymatrix{
      0 \ar[r] & L_\beta \otimes L_\alpha \ar[r]\ar[d]_{-[\_,\_]} & \B_{\le 1} \times^{B_{\le 1}} (\frakg_\beta \oplus (\frakg_\beta \otimes \frakg_\alpha)) \ar[r]\ar[d] & L_\beta \ar[r]\ar[d]_{\id} & 0\\
      0 \ar[r] & L_\gamma \ar[r] & \B_{\le 1} \times^{B_{\le 1}} (\frakg_\beta \oplus \frakg_\gamma) \ar[r] & L_\beta \ar[r] & 0
    }
  \end{equation*}
  from the second exact sequence in (\ref{eq:diagram_L_alpha_O_tS})
  tensored with $L_\beta$ to
  (\ref{eq:sequence_L_gamma_L_beta}). Comparing the classes of these
  exact sequences, we see that
  \begin{equation*}
    -[\_,\_] : L_\beta \otimes L_\alpha \to L_\gamma
  \end{equation*}
  sends $c_\alpha \in H^1(\tS, L_\alpha) = \Ext^1(L_\beta, L_\beta
  \otimes L_\alpha)$ to the class of
  (\ref{eq:sequence_L_gamma_L_beta}) in $\Ext^1(L_\beta,
  L_\gamma)$. This shows that the component (\ref{eq:delta_component})
  of $\delta$ in question is given by
  \begin{equation*}
    -[\_, c_{\alpha,k}] : H^0( \tS_k, L_{\beta, k}) \to H^1( \tS_k, L_{\gamma, k}).
  \end{equation*}
  Therefore, $\delta$ is surjective according to
  Lemma~\ref{lem:cup_epi}. Hence (\ref{eq:ad_sequence_connecting}) is
  also surjective. This proves (ii).

  For the proof of (i), we may assume by induction that $\B_{\le n-1, k}$ does
  not depend on the choice of $\B$.  Since
  \begin{equation*}
    [U_{\ge n-1}, U_{\ge 1}] \subset U_{\ge n},
  \end{equation*}
  the subgroup $U_{= n-1} \subset B_{\le n-1}$ is normal, and the conjugation action of $B_{\le n-1}$
  on $U_{= n-1}$ factors through the action of $T$. This implies
  \begin{equation} \label{eq:H^0_in_Aut}
    \Aut( \B_{\le n-1, k}) \supset \bigoplus_{\beta \in \Phi^+_{=n-1}} H^0( \tS_k, L_{\beta, k}).
  \end{equation}
  The set of lifts of $\B_{\le n-1, k}$ to a $B_{\le n}$-torsor is a torsor under the group
  \begin{equation}\label{eq:lifts_torsor}
    \bigoplus_{\gamma \in \Phi^+_{=n}} H^1( \tS_k, L_{\gamma, k}).
  \end{equation}
  This set comes with an action of $\Aut( \B_{\le n-1, k})$, whose
  restriction to the subgroup in \eqref{eq:H^0_in_Aut} is the
  homomorphism $\delta$ in (\ref{eq:delta}). As we have seen, $\delta$
  is surjective by Lemma~\ref{lem:cup_epi}.  Hence $\Aut( \B_{\le n-1,
    k})$ acts transitively on the set of lifts $\B_{\le n, k}$.  Thus
  $\B_{\le n, k}$ does not depend on the choice of $\B$.

  Since $B_{\le n} = B$ for sufficiently large $n$, this proves part
  (i).
\end{proof}

\begin{remark}
  If $\Phi$ has type $\mathbf{D}_4$ and $k$ has characteristic $2$,
  then Lemma~\ref{lem:cup_epi} is not true, and hence our proof of
  Proposition~\ref{prop:B_unique} does not work in this case.

  In Section~\ref{sec:d4}, we will give an example of a family $S$ of
  cubic surfaces over a discrete valuation ring $R$ with residue field
  $k$ of characteristic $2$ that has a $\mathbf{D}_4$ singularity in the
  special fiber and for which Proposition~\ref{prop:B_unique} (ii) is false.
\end{remark}

\begin{lemma} \label{lem:j>>0}
  The canonical group homomorphism
  \begin{equation*}
    \Aut( \B) \to \Aut( \T) = T( R) \cong (R^{\times})^{10-d}
  \end{equation*}
  is injective, and its image contains the subgroup
  \begin{equation*}
    (1 + \m^j)^{10-d} \subset (R^{\times})^{10-d}
  \end{equation*}
  for any sufficiently large integer $j$.
\end{lemma}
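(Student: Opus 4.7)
The proof splits naturally into an injectivity statement and a surjectivity-onto-$(1+\m^j)^{10-d}$ statement.

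For injectivity, I would identify the kernel of $\Aut( \B) \to T(R) = \Aut( \T)$ with the global sections $H^0( \tS, \B \times^B U)$ of the sheaf of ``unipotent inner automorphisms'' of $\B$, where $B$ acts on its unipotent radical $U$ by conjugation. Using the filtration $U \supset U_{\ge 2} \supset U_{\ge 3} \supset \cdots$ from the proof of Lemma~\ref{lem:lift_to_B}, the subquotients $U_{\ge n}/U_{\ge n+1} \cong \bigoplus_{\alpha \in \Phi^+_{=n}} U_\alpha$ are abelian and carry a $B$-action that factors through $T$ by the characters $\alpha$. Hence the associated bundle $\B \times^B U$ on $\tS$ inherits a filtration with subquotients $\bigoplus_{\alpha \in \Phi^+_{=n}} L_\alpha$. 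Since $H^0( \tS, L_\alpha) = 0$ for every $\alpha \in \Phi^+$ by Lemma~\ref{lem:h1}, an inductive argument through the filtration (in non-abelian $H^0$) gives $H^0( \tS, \B \times^B U) = 1$, proving injectivity.

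For the image, I would fix an integer $j$ large enough that $\m^j$ annihilates $H^1( \tS, L_\alpha)$ for every $\alpha \in \Phi^+$; such $j$ exists because $\Phi^+$ is finite and, by Lemma~\ref{lem:h1}, each $H^1( \tS, L_\alpha) \cong R/\m^{n_\alpha}$ is cyclic and torsion. Given $t \in (1+\m^j)^{10-d} \subset T(R)$, I would construct an automorphism of $\B$ lifting $t$ by induction on $n$ along the same filtration, setting $\B_{\le n} := \B \times^B B_{\le n}$ and lifting $t$ to a $\sigma_n \in \Aut( \B_{\le n})$ at each stage. Uniqueness of $\sigma_n$ (restricting to $\sigma_{n-1}$) follows again from the vanishing of $H^0$ of the relevant $\bigoplus L_\alpha$. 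The existence of $\sigma_n$ is controlled by an obstruction in $H^1( \tS, \bigoplus_{\alpha \in \Phi^+_{=n}} L_\alpha)$, coming from the short exact sequence
\begin{equation*}
1 \to \bigoplus_{\alpha \in \Phi^+_{=n}} L_\alpha \to \B_{\le n} \times^{B_{\le n}} B_{\le n} \to \B_{\le n-1} \times^{B_{\le n-1}} B_{\le n-1} \to 1
\end{equation*}
of sheaves of (not necessarily abelian) groups on $\tS$, with $B_{\le n}$ acting on itself by conjugation.

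The heart of the argument is to verify that this obstruction vanishes for our $t$. The action of $\sigma_{n-1}$ on the abelian subbundle $\bigoplus_\alpha L_\alpha$ factors through $T$ via $t$, acting on each $L_\alpha$ as multiplication by $\alpha(t)$. A direct cocycle computation (equivalently, the transformation rule for extension classes under conjugation by $t$, using that $\B_{\le n}$ is constructed as a lift of $\B_{\le n-1}$ via an extension class in $H^1$ at the previous step) shows that on the $\alpha$-component the obstruction has the form $(\alpha(t)-1) \cdot c_\alpha^{(n)}$ for a suitable class $c_\alpha^{(n)} \in H^1( \tS, L_\alpha)$. Since $\alpha(t) - 1 \in \m^j$ for every root $\alpha$ and $\m^j$ annihilates $H^1( \tS, L_\alpha)$, the obstruction is zero. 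Iterating until $B_{\le n} = B$ produces the required lift of $t$ to $\Aut( \B)$.

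The main technical obstacle is the cocycle computation identifying the obstruction with $(\alpha(t)-1) \cdot c_\alpha^{(n)}$, which requires careful bookkeeping in non-abelian cohomology and uses the $T$-equivariance of the filtration on $B$ together with the recursive construction of $\B_{\le n}$ as an extension. A possible alternative route is to first prove the infinitesimal (Lie-algebra) version, which is parallel to the connecting map $\delta$ analysed in the proof of Proposition~\ref{prop:B_unique}, and then integrate using a Hensel-type argument in the $\m$-adic topology based on the torsion bounds for $H^1( \tS, L_\alpha)$.
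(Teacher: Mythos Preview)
Your overall architecture---induction along the filtration $B=B_{\le N}\to\cdots\to B_{\le 0}=T$, injectivity from $H^0(\tS,L_\alpha)=0$, and lifting obstructions in $\bigoplus_{\alpha\in\Phi^+_{=n}}H^1(\tS,L_\alpha)$---is exactly the paper's. The paper states it via the exact sequences
\[
0 \to \bigoplus_{\alpha\in\Phi^+_{=n}} H^0(\tS,L_\alpha)\to \Aut(\B_{\le n})\to \Aut(\B_{\le n-1}),
\]
deduces injectivity, and then says only that because each $\bigoplus H^1(\tS,L_\alpha)$ has finite length, automorphisms of $\T$ congruent to $1$ modulo $\m^j$ for $j\gg 0$ lift step by step. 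So the paper is in fact \emph{terser} than you are on the second half.

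The one genuine soft spot in your write-up is the claimed shape of the obstruction. At level $n=1$, where $B_{\le 1}=T\ltimes U_{=1}$ splits, the obstruction to lifting $t$ really is $(\alpha(t)-1)c_\alpha$. For $n\ge 2$ there is no splitting of $B_{\le n}\to B_{\le n-1}$, and the obstruction map $\sigma_{n-1}\mapsto[\sigma_{n-1}^*\B_{\le n}]-[\B_{\le n}]$ is only a \emph{crossed} homomorphism into $H^1$; asserting it equals $(\alpha(t)-1)\,c_\alpha^{(n)}$ with $c_\alpha^{(n)}$ independent of $t$ is the statement that this cocycle is a coboundary, which you have not shown (and which would in fact force the image of $\Aut(\B)$ in $T(R)$ to be larger than claimed). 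What actually carries the argument is weaker: the obstruction, computed in \v Cech terms, is an algebraic (hence $\m$-adically continuous) function of $t$ with value $0$ at $t=1$, landing in a module annihilated by $\m^N$; so it vanishes on $(1+\m^j)^{10-d}$ for $j$ large. That is also what the paper is implicitly invoking with the phrase ``finite length'', and it is essentially the Hensel-type alternative you mention at the end. I would promote that alternative to the main line and drop the formula $(\alpha(t)-1)c_\alpha^{(n)}$ for $n\ge 2$.
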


\begin{proof}
  By construction of the torsors $\B_{\le n}$, we have exact sequences
  \begin{equation} \label{eq:Aut(B_n)}
    0 \to \bigoplus_{\alpha \in \Phi^+_{=n}} H^0( \tS, L_{\alpha})
      \to \Aut( \B_{\le n}) \to \Aut( \B_{\le n-1})
  \end{equation}
  for $n \ge 1$, with $\B_{\le 0} := \T$. Here $H^0( \tS, L_{\alpha}) = 0$ according to Lemma~\ref{lem:h1}.
  Therefore, the canonical group homomorphisms
  \begin{equation*}
    \Aut( \B) = \Aut( \B_{\le N}) \to \Aut( \B_{\le N-1}) \to \dots \to \Aut( \B_{\le 0}) = \Aut( \T)
  \end{equation*}
  are all injective. The obstruction against lifting an automorphism of $\B_{\le n-1}$ to an automorphism
  of $\B_{\le n}$ is an element in
  \begin{equation*}
    \bigoplus_{\alpha \in \Phi^+_{=n}} H^1( \tS, L_{\alpha}).
  \end{equation*}
  Since this $R$-module has finite length by Lemma~\ref{lem:h1},
  automorphisms of $\T$ that are congruent to the identity modulo
  $\m^j$ for sufficiently large $j$ can be lifted step by step to
  automorphisms of each $\B_{\le n}$ and of $\B$.
\end{proof}

\begin{proposition}\label{prop:B-torsor_unique_rigid}
  If $k$ has very good characteristic for $\Phi$, then all lifts
  $\B$ of $\B_{\le 1}$ are isomorphic as $B$-torsors, and satisfy
  $H^1(\tS, \ad(\B)) = 0$.
\end{proposition}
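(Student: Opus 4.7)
The plan is to first establish the cohomology vanishing $H^1(\tS, \ad( \B)) = 0$ by a Nakayama argument, and then deduce uniqueness from this together with Proposition~\ref{prop:B_unique}(i) via a formal-deformation lifting.

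For the vanishing: the $R$-module $M := H^1(\tS, \ad( \B))$ is finitely generated because $\tS \to \Spec R$ is projective and $\ad( \B)$ is coherent. Let $\pi \in R$ be a uniformizer. Since $\ad( \B)$ is $R$-flat, the sequence
\[
0 \to \ad( \B) \xrightarrow{\cdot \pi} \ad( \B) \to \ad( \B_k) \to 0
\]
is exact, and it yields in cohomology
\[
M \xrightarrow{\cdot \pi} M \to H^1( \tS_k, \ad( \B_k)) = 0,
\]
the vanishing being Proposition~\ref{prop:B_unique}(ii). Hence $\pi M = M$, and Nakayama's lemma gives $M = 0$.

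For uniqueness: let $\B, \B'$ be two lifts of $\B_{\le 1}$. After passing to the $\m$-adic completion $\hat R$ (and descending back at the end), Proposition~\ref{prop:B_unique}(i) supplies an isomorphism $\phi_0 : \B_k \to \B'_k$ of $B$-torsors. Set $\tS_n := \tS \otimes_R R/\m^{n+1}$. The standard obstruction theory for isomorphisms of $B$-torsors along the square-zero thickening $\tS_n \hookrightarrow \tS_{n+1}$ places the obstruction to extending $\phi_n : \B|_{\tS_n} \to \B'|_{\tS_n}$ in
\[
H^1( \tS_k, \ad( \B_k)) \otimes_k \m^{n+1}/\m^{n+2},
\]
which vanishes again by Proposition~\ref{prop:B_unique}(ii). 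Inductively we obtain a compatible system $(\phi_n)_{n \ge 0}$, and Grothendieck's existence theorem algebraizes this formal system to an isomorphism $\B \cong \B'$ of $B$-torsors over $\tS_{\hat R}$.

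The principal technical point is the algebraization of a formal torsor isomorphism: this is handled by realizing $\underline{\Isom}_B( \B, \B')$ as an affine $\tS$-scheme whose sections over any $R$-algebra are controlled by the proper pushforward of a coherent sheaf of algebras, to which formal GAGA applies. The subsequent descent of this isomorphism from $\hat R$ to $R$ is routine fpqc descent for $B$-torsors.
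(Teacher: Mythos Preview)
Your Nakayama argument for $H^1(\tS,\ad(\B))=0$ is correct and essentially equivalent to the paper's appeal to semicontinuity and Grauert's theorem.

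Your uniqueness argument, however, has two gaps.

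First, the algebraization: the structure sheaf of the affine scheme $\underline{\Isom}_B(\B,\B')\to\tS$ is \emph{not} a coherent $\OO_{\tS}$-module, since $B$ has positive dimension; it is only a finitely generated $\OO_{\tS}$-algebra. So formal GAGA for coherent sheaves does not apply to it in the way you describe. One can still algebraize formal isomorphisms of $B$-torsors over a proper scheme over a complete Noetherian local ring, but this requires a different justification---for instance via an embedding $B\hookrightarrow\GL_n$ and Grothendieck existence for vector bundles together with a reduction-of-structure-group argument, or by exploiting the unipotent filtration of $B$.

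Second, and more seriously, the descent from $\hat R$ to $R$ is not ``routine fpqc descent for $B$-torsors.'' Descent says that the groupoid of $B$-torsors over $\tS$ is the equalizer of the two pullbacks to $\tS_{\hat R\otimes_R\hat R}$; it does \emph{not} say that two torsors defined over $\tS$ which become isomorphic over $\tS_{\hat R}$ were already isomorphic over $\tS$. To conclude that, you would have to verify that your particular isomorphism over $\hat R$ satisfies the cocycle condition over $\hat R\otimes_R\hat R$, or else prove that the map of isomorphism-class sets is injective along $R\to\hat R$; neither is automatic.

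The paper avoids both problems by never leaving $R$. It uses the filtration $B=B_{\le N}\twoheadrightarrow\cdots\twoheadrightarrow B_{\le 1}\twoheadrightarrow T$ to build an isomorphism $\psi:\B\to\B'$ step by step over $R$, with the obstruction at stage $n$ lying in the \emph{torsion} $R$-module $\bigoplus_{\alpha\in\Phi^+_{=n}}H^1(\tS,L_\alpha)$ (Lemma~\ref{lem:h1}). The formal isomorphisms $\phi_i$ are used only to witness that these obstructions vanish: since each $H^1(\tS,L_\alpha)$ is already $\m$-adically complete, the Theorem on Formal Functions identifies it with $\varprojlim_i H^1(i\tS_k,L_{\alpha})$, so formal vanishing gives actual vanishing over $R$. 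An auxiliary step (Lemma~\ref{lem:j>>0}, showing $\Aut(\B)\hookrightarrow T(R)$ with image containing a congruence subgroup) is needed first, to adjust the $\phi_i$ so that the induced $T$-torsor isomorphisms all come from a single global $\psi_{\le 0}:\T\to\T'$, after which the inductive lifting along the $B_{\le n}$ can proceed.
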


\begin{proof}
  We compare the $B$-torsor $\B$ chosen above to another lift $\B'$ of $\B_{\le 1}$.
  Let $\B_{\le n}'$ and $\T'$ denote the $B_{\le n}$-torsor and the $T$-torsor
  induced by $\B'$, respectively.  Here $\T$ and $\T'$ are isomorphic, but we
  will use various isomorphisms between them.

  Part (i) of Proposition~\ref{prop:B_unique} allows us to choose an isomorphism
  \begin{equation*}
    \phi_1: \B_k = \B_{|\tS_k} \to \B_{|\tS_k}' = \B_k'.
  \end{equation*}
  We claim that $\phi_1$ can be lifted to a compatible system of isomorphisms
  \begin{equation*}
    \phi_i: \B_{|i \tS_k} \to \B_{|i \tS_k}'
  \end{equation*}
  for $i \ge 1$. Indeed, the obstruction against lifting $\phi_{i-1}$ to $\phi_i$ is an element in
  \begin{equation*}
    H^1(\tS_k, \ad( \B_k) )
  \end{equation*}
  due to \cite[Th\'eor\`eme~VII.2.4.4]{MR0491681}, since the normal bundle of $\tS_k$ in $\tS$ is trivial.
  Therefore, part (ii) of Proposition~\ref{prop:B_unique} allows us to lift $\phi_{i-1}$ to $\phi_i$. Let
  \begin{equation*}
    \phi_{i, \le 0} \colon \T_{|i \tS_k} \to \T_{|i \tS_k}' \quad\text{and}\quad 
    \phi_{i, \le n} \colon \B_{\le n|i \tS_k} \to \B_{\le n|i \tS_k}'
  \end{equation*}
  denote the isomorphisms of torsors induced by $\phi_i$.

  The choice of an isomorphism $\T \to \T'$ induces bijections
  \begin{equation*}
    \Isom( \T, \T') \cong T( R) \quad\text{and}\quad
    \Isom( \T_{|i \tS_k}, \T_{|i \tS_k}') \cong T( R/\m^i)
  \end{equation*}
  for $i \ge 1$. Therefore, the restriction map
  \begin{equation*}
    \Isom( \T, \T') \to \Isom( \T_{|i \tS_k}, \T_{|i \tS_k}')
  \end{equation*}
  is surjective. We choose an integer $j$ which is sufficiently large in the sense of Lemma~\ref{lem:j>>0},
  and lift $\phi_{j, \le 0}$ to an isomorphism
  \begin{equation*}
    \psi_{\le 0} \colon \T \to \T'.
  \end{equation*}
  Its restrictions $\psi_{\le 0, i}$ to $i \tS_k \subset \tS$ satisfy by construction
  \begin{equation} \label{eq:phi0=psi0}
    \psi_{\le 0, i} = \phi_{i, \le 0}
  \end{equation}
  for $i \le j$. For $i > j$, these differ by an automorphism of $\T$,
  which can be lifted to an automorphism of $\B$ due to Lemma~\ref{lem:j>>0}.
  Modifying $\phi_{j+1}, \phi_{j+2}, \dots$ by these automorphisms of $\B$,
  we can achieve \eqref{eq:phi0=psi0} for all $i \ge 1$.

  We show by induction over $n$ that $\psi_{\le 0}$ can be lifted to an isomorphism
  \begin{equation*}
    \psi_{\le n}: \B_{\le n} \to \B_{\le n}'
  \end{equation*}
  whose restrictions $\psi_{\le n, i}$ to $i \tS_k \subset \tS$ satisfy
  \begin{equation} \label{eq:phi=psi}
    \psi_{\le n, i} = \phi_{i, \le n}.
  \end{equation}
  The obstruction against lifting $\psi_{\le n-1}$ to an isomorphism $\psi_{\le n}$ lies in
  \begin{equation*}
    \bigoplus_{\alpha \in \Phi^+_{=n}} H^1( \tS, L_{\alpha}).
  \end{equation*}
  The restriction of this class to $i \tS_k \subset \tS$ vanishes, because $\psi_{\le n-1, i} = \phi_{i, \le n-1}$
  admits the lift $\phi_{i, \le n}$. But the canonical map
  \begin{equation*}
    H^1( \tS, L_{\alpha}) \to \lim\limits_{\longleftarrow} H^1( i \tS_k, L_{\alpha|i \tS_k})
  \end{equation*}
  is bijective by the Theorem on Formal Functions \cite[Theorem III.11.1]{MR0463157} and Lemma~\ref{lem:h1}.
  Therefore, we can lift $\psi_{\le n-1}$ to an isomorphism $\psi_{\le n}$. Its restrictions $\psi_{\le n, i}$
  differ from the isomorphisms $\phi_{i, \le n}$ by an element of
  \begin{equation*}
    \lim\limits_{\longleftarrow} \bigoplus_{\alpha \in \Phi^+_{=n}} H^0( i \tS_k, L_{\alpha|i \tS_k}).
  \end{equation*}
  But any such element vanishes, because
  \begin{equation*}
    \lim\limits_{\longleftarrow} H^0( i \tS_k, L_{\alpha|i \tS_k}) \cong H^0( \tS, L_{\alpha}) = 0
  \end{equation*}
  according to the Theorem on Formal Functions \cite[Theorem III.11.1]{MR0463157} and Lemma~\ref{lem:h1} again.
  This proves that the chosen lift $\psi_{\le n}$ automatically satisfies \eqref{eq:phi=psi},
  which completes the induction. Taking $n$ sufficiently large, $\psi_{\le n}$ is the required isomorphism
  from $\B_{\le n} = \B$ to $\B_{\le n}' = \B'$.

  Infinitesimal rigidity follows from Proposition~\ref{prop:B_unique}
  (ii) using the Semicontinuity Theorem
  \cite[Theorem~III.12.8]{MR0463157} and Grauert's Theorem
  \cite[Corollary~III.12.9]{MR0463157}.
\end{proof}

We still assume that the $B$-torsor $\B$ over $\tS$ is a lift of $\B_{\le 1}$.
Extending the structure group of $\B$ to $G$, we obtain a $G$-torsor $\G=\B
\times^B G$ over $\tS$.

\begin{cor}\label{cor:G-torsor_unique_rigid}
  If $k$ has very good characteristic for $\Phi$, then 
  $H^1(\tS, \ad(\G))=0$.
\end{cor}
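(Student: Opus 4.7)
The plan is to reduce the statement to Proposition~\ref{prop:B-torsor_unique_rigid} by analyzing $\frakg$ as a $B$-representation. Since $\G = \B \times^B G$, we have $\ad(\G) = \B \times^B \frakg$, and the standard short exact sequence of $B$-modules
\[
0 \to \frakb \to \frakg \to \frakg/\frakb \to 0
\]
induces a short exact sequence of vector bundles on $\tS$:
\[
0 \to \ad(\B) \to \ad(\G) \to \B \times^B (\frakg/\frakb) \to 0.
\]
Proposition~\ref{prop:B-torsor_unique_rigid} gives $H^1(\tS, \ad(\B)) = 0$, so the long exact cohomology sequence reduces the problem to showing that $H^1(\tS, \B \times^B (\frakg/\frakb)) = 0$.

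For this, I would filter $\frakg/\frakb$ by $B$-submodules indexed by the height of positive roots. For each $n \ge 0$, let $M_n \subset \frakg/\frakb$ be the image of $\bigoplus_{\alpha \in \Phi^+,\ \mathrm{ht}(\alpha) \le n} \frakg_{-\alpha}$. This subspace is clearly $T$-stable, and for $\gamma \in \Phi^+$ the bracket $[\frakg_\gamma, \frakg_{-\alpha}]$ either lies in $\frakb$ (and hence vanishes in $\frakg/\frakb$) or lies in $\frakg_{-(\alpha-\gamma)}$ with $\mathrm{ht}(\alpha-\gamma) < \mathrm{ht}(\alpha) \le n$, so $M_n$ is also stable under $\fraku := \bigoplus_{\gamma \in \Phi^+} \frakg_\gamma$, and therefore $B$-invariant. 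The graded pieces $M_n/M_{n-1}$ are $T$-modules with characters $-\alpha$ for $\alpha \in \Phi^+$ of height $n$, on which $B$ acts through $B \twoheadrightarrow T$. Twisting by $\B$ produces a filtration of $\B \times^B (\frakg/\frakb)$ whose graded pieces are $\bigoplus_{\mathrm{ht}(\alpha) = n} L_{-\alpha}$. It therefore suffices to prove that $H^1(\tS, L_{-\alpha}) = 0$ for every $\alpha \in \Phi^+$.

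This last vanishing is an easy variant of Lemma~\ref{lem:h1}. On the smooth generic fiber there are no $(-2)$-curves, so neither $\alpha$ nor $-\alpha$ is effective on $\tS_K$; the same Serre duality and Riemann--Roch computation as in the proof of Lemma~\ref{lem:h1}, applied with $-\alpha$ in place of $\alpha$, yields $H^i(\tS_K, L_{-\alpha, K}) = 0$ for all $i$. On the special fiber, $-\alpha$ is not effective either: $\alpha$ is a non-trivial non-negative combination of the linearly independent simple root classes $[D_i]$, so $-\alpha$ cannot be written as a non-negative combination of effective curve classes whose intersection with the nef class $-K_{\tS}$ vanishes. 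Hence $H^0(\tS_k, L_{-\alpha, k}) = 0$; Serre duality combined with $(K_{\tS} + \alpha, -K_{\tS}) = -d < 0$ and the nefness of $-K_{\tS}$ gives $H^2(\tS_k, L_{-\alpha, k}) = 0$; Riemann--Roch then forces $H^1(\tS_k, L_{-\alpha, k}) = 0$. Cohomology and Base Change~\cite[Theorem~III.12.11]{MR0463157} finishes the proof. The only real point of care is the verification that the height filtration on $\frakg/\frakb$ is $B$-stable, which the bracket computation handles; all remaining cohomological input is a routine adaptation of tools already developed in the paper.
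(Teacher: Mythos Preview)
Your argument is correct and follows essentially the same route as the paper: filter $\frakg/\frakb$ as a $B$-module so that the associated graded pieces of $\ad(\G)/\ad(\B)$ are the line bundles $L_{-\alpha}$, and combine the vanishing of $H^0(\tS, L_{-\alpha})$ and $H^1(\tS, L_{-\alpha})$ with Proposition~\ref{prop:B-torsor_unique_rigid}. One small caution: the inference ``$M_n$ is $\fraku$-stable, and therefore $B$-invariant'' is not a valid step in positive characteristic, since Lie-algebra invariance need not imply group invariance there; instead, note that the adjoint action of each root group $U_\gamma$ sends $\frakg_\mu$ into $\bigoplus_{m \ge 0} \frakg_{\mu+m\gamma}$, from which $B$-stability of the height filtration follows directly.
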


\begin{proof}
  The vector bundle $\ad(\B) = \B \times^B \frakb$ is associated with
  the $B$-torsor $\B$ and the $B$-module $\frakb$. Similarly, $\ad(\G)
  = \G \times^G \frakg = \B \times^B \frakg$ is associated with $\B$
  and the $B$-module $\frakg$. The $B$-module $\frakg/\frakb$ has a
  composition series with composition factors $\frakg_{-\alpha}$ for
  $\alpha \in \Phi^+$. Therefore, the associated vector bundle
  $\ad(\G)/\ad(\B)$ has a composition series with composition factors
  $\B \times^B \frakg_{-\alpha} \cong L_{-\alpha}$. Using induction
  over this composition series and
  \begin{equation*}
    H^0(\tS, L_{-\alpha})=H^1(\tS,L_{-\alpha})=0
  \end{equation*}
  for $\alpha \in \Phi^+$, we conclude that the natural map
  \begin{equation}\label{eq:isom_ad_B_ad_G}
    H^1(\tS,\ad(\B)) \to H^1(\tS,\ad(\G))
  \end{equation}
  is an isomorphism.
\end{proof}

\begin{proposition} \label{prop:trivial_on_D}
  The $G$-torsor $\G$ is trivial on every $(-2)$-curve $D \subset \tS_k$.
\end{proposition}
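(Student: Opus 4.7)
The class of $D$ in $\Lambda$ is a simple root $\alpha \in \Delta$, so $D \cong \PP^1_k$ and $L_\beta|_D$ is a line bundle of degree $(\beta,\alpha)$; in particular $L_\alpha|_D \cong \OO_D(-2)$. My plan is to reduce to an $\mathrm{SL}_2$-calculation on $\PP^1_k$ where the non-vanishing of $c_\alpha|_D$ forces a specific rank-two bundle (and hence the induced $G$-torsor) to be trivial.

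First I would verify that $c_\alpha|_D \in H^1(D, L_\alpha|_D) \cong k$ is nonzero. Since $D$ has class $\alpha$, we have $L_\alpha \otimes \OO_{\tS_k}(-D) \cong \OO_{\tS_k}$, so the short exact sequence
\begin{equation*}
  0 \to \OO_{\tS_k} \to L_{\alpha,k} \to L_\alpha|_D \to 0
\end{equation*}
combined with $H^1(\tS_k, \OO_{\tS_k}) = H^2(\tS_k, \OO_{\tS_k}) = 0$ yields an isomorphism $H^1(\tS_k, L_{\alpha,k}) \xrightarrow{\sim} H^1(D, L_\alpha|_D)$. By the construction of $\B_{\le 1}$, the class $c_{\alpha,k}$ generates the source, so $c_\alpha|_D$ generates the one-dimensional target. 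Note moreover that $H^1(D, L_\beta|_D) = 0$ for $\beta \in \Phi^+ \setminus \{\alpha\}$ since $(\beta,\alpha) \in \{-1,0,1\}$.

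Next I would reduce to the minimal parabolic $P_\alpha \supset B$ associated to $\alpha$, with Levi decomposition $P_\alpha = M \ltimes U_{P_\alpha}$, where $M = T \cdot G_\alpha$ for the rank-one subgroup $G_\alpha$ generated by $U_{\pm\alpha}$, and $U_{P_\alpha} = \prod_{\beta \in \Phi^+ \setminus \{\alpha\}} U_\beta$ is normal in $B$. Form the Levi quotient $\mathcal{M}|_D := \B|_D \times^B M$ via the composite $B \twoheadrightarrow TU_\alpha \hookrightarrow M$. Since $H^1(\PP^1_k, \GG_{\mathrm{a}}) = 0$, a devissage yields $H^1(D, U_{P_\alpha}) = 0$, so every $P_\alpha$-torsor on $D$ with trivial Levi quotient is itself trivial; hence it suffices to show that $\mathcal{M}|_D$ is trivial, from which triviality of $\G|_D = (\B|_D \times^B P_\alpha) \times^{P_\alpha} G$ follows.

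To see $\mathcal{M}|_D$ is trivial, I would split into abelianization and derived part. The projection $M \twoheadrightarrow M/G_\alpha \cong T/(T \cap G_\alpha)$ sends $\mathcal{M}|_D$ to a torus torsor with cocharacter the image of $-\alpha^\vee$ in $\Lambda^\vee/X_*(T \cap G_\alpha)$; since $\alpha^\vee \in X_*(T \cap G_\alpha)$, this image vanishes and the torus torsor is trivial. For the derived $G_\alpha$ (isomorphic to $\mathrm{SL}_2$ or $\PGL_2$), the $B_\alpha$-reduction of $\mathcal{M}|_D$ obtained from $\B|_D/U_{P_\alpha}$ equips the standard rank-two representation of $G_\alpha$ (or its adjoint representation in the $\PGL_2$ case) with an extension $0 \to \OO_D(-1) \to V_D \to \OO_D(1) \to 0$, whose class in $H^1(D, \OO_D(-2)) = H^1(D, L_\alpha|_D)$ coincides up to a nonzero scalar with $c_\alpha|_D$, by the same diagram-chase that defined $c_\alpha$ in \eqref{eq:c_alpha}. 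Since $c_\alpha|_D \ne 0$, the extension is non-split and $V_D \cong \OO_D^{\oplus 2}$, so the $G_\alpha$-torsor is trivial and $\mathcal{M}|_D$ is trivial as required. The main obstacle is the precise identification of the extension class of $V_D$ with $c_\alpha|_D$ up to a nonzero scalar, which requires a diagram-chase parallel to \eqref{eq:diagram_L_alpha_O_tS} but with the $G_\alpha$-representation in place of $\End(\frakg_\alpha) \oplus \frakg_\alpha$.
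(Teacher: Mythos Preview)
Your approach is essentially the paper's: both reduce to showing that a rank-two bundle on $D\cong\PP^1_k$, built from the nonzero class $c_\alpha|_D$, is trivial. You phrase things via the minimal parabolic $P_\alpha$ and its Levi $M$; the paper's subgroup $G_\alpha$ (the split reductive group with maximal torus $T$ and roots $\pm\alpha$) is exactly this Levi, and the paper's $B_\alpha$ is your $TU_\alpha$. The reduction step (discarding the $U_\beta$ with $\beta\neq\alpha$ because $H^1(D,L_\beta|_D)=0$) and the verification $c_\alpha|_D\neq 0$ are identical.

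Where the arguments diverge is in how the Levi is handled. The paper proves, by exhibiting explicit classes $e_1,e_2\in\Lambda$ with $\alpha=e_1-e_2$ and suitable intersection numbers, that $M\cong\GL_2\times\Gm^{8-d}$. This single lattice computation simultaneously (i) shows that $\alpha^\vee$ is primitive in $\Lambda^\vee$ (so the derived subgroup is $\mathrm{SL}_2$, not $\PGL_2$), (ii) produces the rank-two bundle $E$ directly as the $\GL_2$-component, and (iii) makes the identification of the extension class of $E$ with $c_\alpha$ immediate. Your abstract split into torus quotient and derived part is structurally clean, but it leaves two loose ends that both come down to this same lattice fact. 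First, the $\PGL_2$ branch you hedge on does not occur here, and your sketch for it via the adjoint representation would not work as stated: triviality of the associated adjoint bundle does not by itself force triviality of a $\PGL_2$-torsor the way the standard representation does for $\mathrm{SL}_2$. Ruling out $\PGL_2$ requires precisely the primitivity of $\alpha^\vee$, i.e.\ the existence of some $\lambda\in\Lambda$ with $\langle\alpha^\vee,\lambda\rangle=1$. Second, the diagram-chase you flag as the main obstacle---matching the extension class of $V_D$ with $c_\alpha|_D$ up to a unit---is exactly what the explicit $\GL_2$ splitting renders transparent; doing it abstractly again needs a choice of such a $\lambda$ to define the two-dimensional $M$-representation in the first place.

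So your outline is correct, and the one missing ingredient is the small lattice check that the paper carries out.
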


\begin{proof}
  Let $\alpha \in \Delta$ be the class of $D$. The restriction $L_{\alpha|D}$
  is a line bundle of degree $(\alpha, \alpha) = -2$ on $D \cong \PP^1_k$, which implies
  \begin{equation}\label{eq:onedim_h1}
    H^1(D, L_{\alpha|D}) \cong H^1(\PP^1_k, \OO_{\PP^1_k}(-2)) \cong k.
  \end{equation}
  Tensoring the short exact sequence
  \begin{equation*}
    0 \to \OO_{\tS_k}(-D) \to \OO_{\tS_k} \to \OO_D \to 0
  \end{equation*}
  with the line bundle $L_{\alpha,k} \cong \OO_{\tS_k}(D)$, we get a short exact sequence
  \begin{equation*}
    0 \to \OO_{\tS_k} \to L_{\alpha,k} \to L_{\alpha|D} \to 0
  \end{equation*}
  of coherent sheaves on $\tS_k$. Since $H^i(\tS_k, \OO_{\tS_k})$ vanishes for $i=1,2$
  by their birational invariance \cite[Proposition~V.3.4]{MR0463157}, the associated
  long exact cohomology sequence shows that the restriction homomorphism
  \begin{equation}\label{eq:isom_h1}
    H^1(\tS_k, L_{\alpha,k}) \to H^1(D, L_{\alpha|D})
  \end{equation}
  is bijective. For $\beta \in \Phi^+$ with $\beta \neq \alpha$,
  the degree of $L_{\beta|D}$ on $D \cong \PP^1_k$ is
  \begin{equation*}
    (\alpha,\beta) = -\langle \beta^\vee, \alpha\rangle =: n \in \{-1,0,1\},
  \end{equation*}
  because $\alpha \neq \beta$ are roots in the simply laced root system $\Phi$. This implies
  \begin{equation}\label{eq:trivial_h1}
    H^1(D, L_{\beta|D}) \cong H^1(\PP^1_k, \OO_{\PP^1_k}(n)) = 0.
  \end{equation}

  Let $G_{\alpha} \subset G$ be the split reductive subgroup with
  the same maximal torus $T$ and only the two roots $\pm \alpha$.
  Then $B_{\alpha} := B \cap G_{\alpha}$ sits in an exact sequence
  \begin{equation*}
    0 \to U_\alpha \to B_{\alpha} \to T \to 0.
  \end{equation*}
  Let the $B_{\alpha}$-torsor $\B_{\alpha}$ on $\tS$ be the lift of the $T$-torsor $\T$
  corresponding to the class $c_\alpha$ chosen in \eqref{eq:c_alpha}.
  Let $\G_{\alpha}$ be the $G_{\alpha}$-torsor over $\tS$ induced by $\B_{\alpha}$.

  The $B$-torsor induced by $\B_{\alpha}$ becomes isomorphic to $\B$ when both are restricted to $D$,
  because there the lifting over each $U_\beta$ with $\beta \in \Phi^+ \setminus \{\alpha\}$
  is unique by \eqref{eq:trivial_h1}. Hence it suffices to prove that $\G_{\alpha}$ is trivial on $D$.

  In the case (i) of blow-ups of $\PP^2$, \cite[II.2(6)]{MR579026}
  shows that $\alpha = e_1-e_2$ for two classes $e_i \in
  \Lambda$ satisfying $(e_i,e_i)=-1$ and $(e_1,e_2)=0$. 
  Since their intersection matrix $(\begin{smallmatrix}
    -1 & 0 \\ 0 & -1
  \end{smallmatrix})$ is invertible over $\ZZ$, we can extend
  $e_1,e_2$ to a basis $e_1,\dots,e_{10-d}$ of $\Lambda$ with
  $(e_1,e_i)=(e_2,e_i)=0$ for all $i \ge 3$.

  In the case (ii) of $\PP^1$-bundles over $\PP^1$, with $d=8$, we
  have $\alpha = e_1-e_2$, where $e_1$ and $e_2$ in $\Lambda =
  \Pic(\tS)$ restrict to the classes of fiber and constant section in
  $\tS_K = \PP^1_K \times_K \PP^1_K \to \PP^1_K$, respectively. Here,
  $(e_i,e_i)=0$ and $(e_1,e_2)=1$. We note that $e_1,e_2$ is a basis
  of $\Lambda$.

  In both cases (i) and (ii), we have $\alpha=e_1-e_2$ and
  \begin{equation*}
    \langle \alpha^\vee , e_i \rangle = (-\alpha,e_i) =
    \begin{cases}
      1 & \text{for }i=1,\\
      -1 & \text{for }i=2,\\
      0 & \text{for }i\ge 3.
    \end{cases}
  \end{equation*}
  These descriptions of $\alpha$ and $\alpha^\vee$ allow us to extend
  the decomposition $T \cong \Gm^{10-d}$ given by $e_1, \dots,
  e_{10-d}$ to a decomposition
  \begin{equation*}
    G_{\alpha} \cong \GL_2 \times \Gm^{8-d}.
  \end{equation*}
  This also induces a decomposition of $B_{\alpha}$.

  Let $L_{e_i}$ be a line bundle on $\tS$ of class $e_i \in \Pic(\tS)
  = \Lambda$. Under the above decompositions, the $B_{\alpha}$-torsor
  $\B_{\alpha}$ corresponds to the $10-d$ line bundles $L_{e_i}$ over
  $\tS$ and the vector bundle extension
  \begin{equation*}
    0 \to L_{e_1} \to E \to L_{e_2} \to 0
  \end{equation*}
  of class $c_{\alpha} \in \Ext^1( L_{e_2}, L_{e_1}) \cong H^1( \tS, L_{\alpha})$,
  and the $G_{\alpha}$-torsor $\G_{\alpha}$ corresponds to
  the vector bundle $E$ and the line bundles $L_{e_3}, \dots, L_{e_{10-d}}$ over $\tS$.

  The restriction of $L_{e_i}$ to $D \cong \PP^1_k$ is a line bundle of degree $(\alpha, e_i)$.
  For $i \ge 3$, we have $(\alpha, e_i) = 0$, and therefore $L_{e_i|D}$ is trivial.
  Since $(\alpha, e_1) = -1$ and $(\alpha, e_2) = 1$ in both cases (i) and (ii),
  the restriction of $E$ to $D \cong \PP^1_k$ is given as an extension
  \begin{equation}\label{eq:E_D}
    0 \to \OO_{\PP^1_k}(-1) \to E_{|D} \to \OO_{\PP^1_k}(1) \to 0,
  \end{equation}
  whose class in $H^1( \PP^1_k, \OO_{\PP^1_k}(-2)) \cong k$ corresponds to the restricted class
  \begin{equation*}
    c_{\alpha|D} \in H^1( D, L_{\alpha|D})
  \end{equation*}
  under the isomorphism in \eqref{eq:onedim_h1}. This class is nontrivial since the class
  \begin{equation*}
    c_{\alpha|\tS_k} = c_{\alpha,k} \in H^1( \tS_k, L_{\alpha,k})
  \end{equation*}
  is nontrivial by the choice of $c_{\alpha}$ in \eqref{eq:c_alpha} together with Lemma~\ref{lem:h1},
  and the restriction map from $\tS_k$ to $D$ in \eqref{eq:isom_h1} is bijective.
  Therefore, the extension \eqref{eq:E_D} does not split.
  This implies that the vector bundle $E_{|D}$ over $D \cong \PP^1_k$ is trivial.
  Hence the $G_{\alpha}$-torsor $\G_{\alpha|D}$ over $D$ is also trivial, as required.
\end{proof}

\begin{cor}
  Let $x$ be a singular point on $S_k$. The $G$-torsor $\G$ over $\tS$ constructed above
  becomes trivial over the following fiber product $\tS_x$:
  \begin{equation}\label{eq:S_x}
    \xymatrix{ 
      \tS_x \ar[r] \ar[d]_{\phi_x} & \tS \ar[d]^{\phi}\\
       \Spec(\OOhat_{S,x}) \ar[r] & S
    }
  \end{equation}
\end{cor}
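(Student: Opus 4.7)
The plan is to build a trivialization of $\G$ in stages that correspond to thickening the exceptional divisor over $x$, and then algebraize the result. The starting point is Proposition~\ref{prop:trivial_on_D}, which trivializes $\G$ on each individual $(-2)$-curve $D_i$ above $x$. We then work along the filtration $0 = Z_0 < \dots < Z_r = \Zred < \dots < Z_N = Z$ of Lemma~\ref{lem:combinatorial}, extending the trivialization first to $\Zred$, then to $Z$, and finally to every infinitesimal neighborhood of $Z$ in $\tS$; formal existence will then transfer it to $\tS_x$.

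For $1 \le j \le r$, the scheme $Z_j$ is reduced and equals the scheme-theoretic union $Z_{j-1} \cup D_{i_j}$, and the two pieces meet transversally in at most one reduced $k$-point since $(Z_{j-1}, D_{i_j}) \le 1$. A trivialization of $\G|_{Z_{j-1}}$ (by induction) and the trivialization of $\G|_{D_{i_j}}$ from Proposition~\ref{prop:trivial_on_D} can be made to agree on this intersection after adjusting the second by a constant element of $G(k)$, so they glue to a trivialization of $\G|_{Z_j}$. At $j = r$ this yields a trivialization of $\G$ on $\Zred$.

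For $r < j \le N$, the curve $D_{i_j}$ already occurs inside $Z_{j-1}$, so the ideal $\II_{Z_{j-1} \subset Z_j}$ is square-zero, and via the isomorphism $\II_{Z_{j-1} \subset Z_j} \cong \OO_{D_{i_j}}(-Z_{j-1})$ from the proof of Lemma~\ref{lem:Z_j} it is supported on $D_{i_j} \subset Z_{j-1}$. Smoothness of $G$ puts the obstruction to lifting the trivialization into $H^1(Z_{j-1}, \ad(\G)|_{Z_{j-1}} \otimes \II_{Z_{j-1} \subset Z_j})$, which by the already-established triviality of $\G|_{Z_{j-1}}$ becomes $\frakg \otimes H^1(D_{i_j}, \II_{Z_{j-1} \subset Z_j}) = 0$ by Lemma~\ref{lem:Z_j}(i). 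Hence the trivialization extends to $Z$. The same obstruction-theoretic argument, now applied to the square-zero thickenings in $\tS$ with ideals $\II_{Z \subset \tS}^n/\II_{Z \subset \tS}^{n+1}$ and using Proposition~\ref{prop:I^n/I^n+1} in place of Lemma~\ref{lem:Z_j}(i), produces a compatible system of trivializations of $\G$ on every infinitesimal neighborhood of $Z$ in $\tS$.

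Finally, because $\OOhat_{S,x}$ is complete and $\phi$ is proper, the formal completion of $\tS$ along $Z$ coincides with the formal completion of $\tS_x$ along its closed fiber. Grothendieck's formal existence theorem, in its torsor version for the smooth affine group scheme $G$ over the complete local base $\Spec(\OOhat_{S,x})$, then algebraizes this compatible system to a trivialization of $\G|_{\tS_x}$. The main technical subtlety will be this last step: one needs the torsor form of formal existence together with a verification that the $\m_x$-adic filtration on $\OO_{\tS_x}$ is cofinal with the $\II_{Z \subset \tS}$-adic filtration on $\OO_{\tS}$, so that the infinitesimal thickenings of the closed fiber of $\tS_x$ match up with the infinitesimal neighborhoods of $Z$ we have trivialized over.
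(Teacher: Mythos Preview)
Your argument is correct and follows the same route as the paper: glue trivializations along the reduced chain $Z_1,\dots,Z_r=\Zred$, extend through the square-zero steps $Z_{r+1},\dots,Z_N=Z$ using Lemma~\ref{lem:Z_j}(i), thicken inside $\tS$ using Proposition~\ref{prop:I^n/I^n+1}, and algebraize via Grothendieck's Existence Theorem. The one point you leave open---the comparison of the $\m_x$-adic and $\II_{Z\subset\tS}$-adic filtrations---is dispatched in the paper not by a cofinality argument but by an equality: Artin's theorem on rational singularities (the reference \cite[Theorem~4]{MR0199191} already used to introduce the fundamental cycle) gives $\phi^*(\m_x)=\II_{Z\subset\tS}$, so the infinitesimal neighborhoods coincide on the nose and the algebraization step goes through directly.
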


\begin{proof}
  We work with the sequence of effective divisors on $\tS_k$
  \begin{equation*}
    0 = Z_0 < Z_1 < \dots < Z_r = \Zred < \dots < Z_N = Z
  \end{equation*}
  from Lemma~\ref{lem:combinatorial}, where $Z$ is still the fundamental cycle on $\tS_k$ over $x$.

  First, we show that $\G$ is trivial on $Z_j$ for $j=1, \dots, r$.
  Indeed, by induction, we can find a trivialization of $\G$ on $Z_{j-1}$. Then
  \begin{equation*}
    Z_j=Z_{j-1}\cup D_{i_j}
  \end{equation*}
  where $D_{i_j}$ meets $Z_{j-1}$ in at most one point.
  Proposition~\ref{prop:trivial_on_D} states that $\G$ is trivial on $D_{i_j}$.
  We can trivialize on $D_{i_j}$ in such a way that both trivializations agree on $Z_{j-1} \cap D_{i_j}$.
  Then they define a trivialization of $\G$ on $Z_j$.

  Next, we show by induction that $\G$ is trivial on $Z_j$ for all $j=r+1 , \dots, N$.
  Since $\G$ is trivial on $D_{i_j}$ by Proposition~\ref{prop:trivial_on_D}, its adjoint vector bundle
  \begin{equation*}
    \ad( \G) \to \tS
  \end{equation*}
  is also trivial on $D_{i_j}$. Therefore, Lemma~\ref{lem:Z_j} implies that
  \begin{equation}\label{eq:h1ad}
    H^1(D_{i_j}, \II_{Z_{j-1} \subset Z_j} \otimes \ad(\G)_{|D_{i_j}}) = 0.
  \end{equation}
  Assuming by induction that $\G$ is trivial on $Z_{j-1}$, the
  vanishing of \eqref{eq:h1ad} means that $\G$ is also trivial on
  $Z_j$ \cite[Th\'eor\`eme~VII.2.4.4]{MR0491681}. 

  In particular, $\G$ is trivial on $Z$. Therefore, Proposition~\ref{prop:I^n/I^n+1} implies that
  \begin{equation}\label{eq:h1ad_Z}
    H^1(Z, \II_{Z \subset \tS}^n/\II_{Z \subset \tS}^{n+1} \otimes \ad(\G)_{|Z}) = 0.
  \end{equation}
  Let $\m_x \subset \OO_S$ denote the ideal sheaf of $x$. We have
  \begin{equation*}
    \II_{Z \subset \tS} = \phi^*( \m_x)
  \end{equation*}
  according to \cite[Theorem~4]{MR0199191}, and therefore
  \begin{equation*}
    \II_{Z \subset \tS}^n = \phi^*( \m_x^n).
  \end{equation*}
  Let $Z^{(n)}$ denote the closed subscheme in $\tS$ with this ideal sheaf. 
  Assuming by induction that we have a section of $\G$ over $Z^{(n)}$, the
  vanishing of \eqref{eq:h1ad_Z} means that this section can be
  extended to a section of $\G$ over $Z^{(n+1)}$.

  These compatible sections induce a section of $\G$ over $\tS_x$
  by Grothendieck's Existence Theorem \cite[Scholie~5.4.2]{MR0217085},
  since $\tS$ is proper over $S$.
\end{proof}

Recall that we have lifted a universal torsor $\T$ over $\tS$ nontrivially to
a $B_{\le 1}$-torsor $\B_{\le 1}$, and further to a $B$-torsor $\B$; see
Lemma~\ref{lem:B1_unique} and Lemma~\ref{lem:lift_to_B}.

\begin{theorem}\label{thm:main}
  Let $\G$ still be the $G$-torsor over $\tS$ induced by $\B$.
  \begin{enumerate}
  \item[(i)] There is a unique $G$-torsor $\G'$ over $S$
    such that $\phi^*\G' \cong \G$.
  \item[(ii)] If $k$ has very good characteristic for $\Phi$,
    then $H^1(S, \ad(\G'))=0$.
  \end{enumerate}
\end{theorem}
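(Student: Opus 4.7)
The plan is to descend $\G$ from $\tS$ to $S$ using the trivialization of $\G$ on each formal neighborhood $\tS_x$ provided by the preceding corollary, combined with the fact that $\phi$ is an isomorphism away from $\Ssing$. The main technical obstacle is controlling $\phi_*$ and its higher direct images on associated vector bundles; this reduces to an explicit local computation on each $\tS_x$ thanks to the triviality of $\G$ there and the cohomological properties of $\phi$ (Proposition~\ref{prop:O_tS_O_S}).

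For existence in (i), I adopt the Tannakian viewpoint. For each finite-dimensional representation $V$ of $G$, form the associated vector bundle $E_V := \G \times^G V$ on $\tS$, and set $F_V := \phi_* E_V$. First I verify that $F_V$ is locally free of rank $\dim V$ and that $R^q\phi_* E_V = 0$ for $q \ge 1$. Away from $\Ssing$, this is immediate since $\phi$ is an isomorphism. Near a singular point $x$, the preceding corollary gives $\G|_{\tS_x} \cong G \times \tS_x$, so $E_V|_{\tS_x} \cong V \otimes \OO_{\tS_x}$; combined with Proposition~\ref{prop:O_tS_O_S} applied to $\phi_x \colon \tS_x \to \Spec(\OOhat_{S,x})$ via flat base change along $\OO_{S,x} \to \OOhat_{S,x}$, this yields $F_V \otimes_{\OO_S} \OOhat_{S,x} \cong V \otimes \OOhat_{S,x}$ and the desired vanishing. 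The same local check shows that $V \mapsto F_V$ preserves tensor products and is exact (using the vanishing of $R^1\phi_* E_{V'}$ for $V' \subset V$). Tannakian reconstruction then produces a $G$-torsor $\G'$ on $S$, and the adjunction $\phi^*F_V \to E_V$ is an isomorphism (trivially on $\phi^{-1}(S \setminus \Ssing)$ and on each $\tS_x$ after trivialization), which furnishes $\phi^*\G' \cong \G$.

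For uniqueness in (i), suppose $\G'_1, \G'_2$ are two descents. The scheme $H := \Isom_S(\G'_1, \G'_2)$ is affine over $S$, being a torsor under the inner form $\Aut_S(\G'_1)$ of $G$; write $H = \Spec_S \A$. The given isomorphism $\phi^*\G'_1 \cong \G \cong \phi^*\G'_2$ supplies an $\tS$-section of $\phi^*H$, i.e.\ an $\OO_\tS$-algebra map $\phi^*\A \to \OO_\tS$, equivalently (by Proposition~\ref{prop:O_tS_O_S}) an $\OO_S$-algebra map $\A \to \phi_*\OO_\tS = \OO_S$. This is an $S$-section of $H$, giving $\G'_1 \cong \G'_2$.

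For part (ii), I apply the Leray spectral sequence
\begin{equation*}
  E_2^{p,q} = H^p(S, R^q\phi_*\ad(\G)) \Longrightarrow H^{p+q}(\tS, \ad(\G)).
\end{equation*}
Since $\ad(\G) \cong \phi^*\ad(\G')$ and $\ad(\G')$ is locally free, the projection formula gives $R^q\phi_*\ad(\G) \cong \ad(\G') \otimes_{\OO_S} R^q\phi_*\OO_\tS$, which by Proposition~\ref{prop:O_tS_O_S} equals $\ad(\G')$ for $q = 0$ and vanishes for $q \ge 1$. The spectral sequence degenerates to isomorphisms $H^p(S, \ad(\G')) \cong H^p(\tS, \ad(\G))$; taking $p = 1$ and invoking Corollary~\ref{cor:G-torsor_unique_rigid} completes the proof.
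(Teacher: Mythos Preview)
Your proof is correct. Part (ii) and the uniqueness argument in (i) match the paper's almost verbatim: both use $\phi_*\OO_{\tS}=\OO_S$ for uniqueness, and both run the Leray spectral sequence with the projection formula and Proposition~\ref{prop:O_tS_O_S} for the rigidity statement.

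For existence in (i) you take a genuinely different route. The paper constructs $\G'$ directly as $\relSpec_{\OO_S}(\phi_*\A)$, where $\G=\relSpec_{\OO_{\tS}}(\A)$, and then verifies fpqc-locally (on the open complement of $\Ssing_k$ and on each $\Spec\OOhat_{S,x}$) that the induced $G$-action makes $\G'$ a torsor and that the adjunction $\G\to\phi^*\G'$ is an isomorphism. Your Tannakian argument replaces this by pushing forward all associated vector bundles $E_V$ and invoking reconstruction. Both approaches rest on exactly the same local inputs (the preceding corollary for triviality on $\tS_x$, Proposition~\ref{prop:O_tS_O_S}, and flat base change), so the substantive content is the same. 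The paper's method is more elementary in that it avoids appealing to Tannakian duality over a base scheme, which, while standard, is heavier machinery than simply checking that $\phi_*\A$ carries a $G$-action with the right local structure; on the other hand, your approach makes the compatibility with associated bundles (needed anyway for (ii)) transparent from the start.
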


\begin{proof}
  Since $\G$ is an affine scheme over $\tS$, we have
  \begin{equation*}
    \G \cong \relSpec_{\OO_{\tS}}(\A)
  \end{equation*}
  for some quasicoherent $\OO_{\tS}$-algebra $\A$. We define
  \begin{equation*}
    \G' := \relSpec_{\OO_S}(\phi_* \A).
  \end{equation*}
  The adjunction morphism $\phi^*\phi_*\A \to \A$ induces a natural map
  \begin{equation}\label{eq:adjunction}
    \G \to \G' \times_S \tS.
  \end{equation}
  Assume that $G$ is the spectrum of the $R$-algebra $A$. The group action
  \begin{equation*}
    G \times_R \G \to \G
  \end{equation*}
  induces a morphism $\A \to A \otimes_R \A$ of $\OO_\tS$-algebras, and hence a morphism
  \begin{equation*}
    \phi_*\A \to \phi_*(A \otimes_R \A)=A \otimes_R (\phi_* \A)
  \end{equation*}
  of $\OO_S$-algebras. Here, the last equality holds because $G$, and hence also $A$, is flat over $R$.
  This morphism of $\OO_S$-algebras induces a morphism
  \begin{equation} \label{eq:action_on_H}
    G \times_R \G' \to \G'
  \end{equation}
  over $S$. We claim that the following statements hold, which imply (i):
  \begin{itemize}
   \item The morphism \eqref{eq:action_on_H} is a group action of $G$ on $\G'$ over $S$.
   \item This group action turns $\G'$ into a $G$-torsor over $S$.
   \item The natural map \eqref{eq:adjunction} is an isomorphism of $G$-torsors.
  \end{itemize}
  According to \cite[Propositions~2.5.1 and 2.7.1]{MR0199181}, all this can be tested
  locally in the fpqc-topology on $S$. We use the fpqc-covering
  \begin{equation*}
    (S \setminus \Ssing_k) \amalg \coprod_{x \in \Ssing_k} \Spec( \widehat\OO_{S,x}) \to S
  \end{equation*}
  where $\Ssing_k \subset S_k \subset S$ denotes the singular locus of $S_k$.

  All our claims hold over $S \setminus \Ssing_k$ because $\phi$ is an isomorphism there.
  They also hold over each $\Spec( \widehat\OO_{S,x})$ because $\G$ is trivial there, and
  \begin{equation*}
    (\phi_x)_*\OO_{\tS_x} = \OO_{\Spec(\OOhat_{S,x})}
  \end{equation*}
  by Proposition~\ref{prop:O_tS_O_S} and flat base change in the diagram
  \eqref{eq:S_x}.

  Uniqueness of $\G'$ also follows from Proposition~\ref{prop:O_tS_O_S}.

  For the proof of (ii), we note that $\ad(\G) = \phi^*\ad(\G')$
  by construction. Therefore, we have $R^i\phi_*(\ad\G) = 0$ for
  all $i > 0$ since this can be tested Zariski locally on $S$,
  where it holds by Proposition~\ref{prop:O_tS_O_S}. Using the
  Leray spectral sequence, we conclude that
  \begin{equation}\label{eq:isom_ad_G_ad_G'}
    H^1(S, \ad(\G')) \cong H^1(\tS, \ad(\G)).
  \end{equation}
  Hence (ii) follows from Corollary~\ref{cor:G-torsor_unique_rigid}.
\end{proof}

\begin{remark}
  The restriction of $\G$ to the generic fiber $S_K$ is induced by the
  $T$-torsor $\T$.  But over the special fiber $S_k$, the restriction of $\G$
  does not come from a $T$-torsor.  The universal $T$-torsor over the
  desingularization $\tS_k$ is nontrivial on the $(-2)$-curves, and therefore
  does not descend to $S_k$.
\end{remark}

\begin{proposition}\label{prop:uniqueness}
  Let the $B$-torsor $\Bbar$ over $\tS$ be an arbitrary lift of
  $\T$. Let $\Gbar$ be the $G$-torsor over $\tS$ induced by
  $\Bbar$. Suppose that $\Gbar$ descends to a $G$-torsor
  $\Gbar'$ over $S$.  If $k$ has very good characteristic for $\Phi$, then
  there is an automorphism $\sigma$ of $G$ with $\sigma_{|T} = \id_T$ such that
  \begin{equation*}
    \Bbar \cong (\sigma_{|B})_* \B, \qquad
    \Gbar \cong \sigma_* \G \qquad\text{and}\qquad
    \Gbar' \cong \sigma_* \G'.
  \end{equation*}
\end{proposition}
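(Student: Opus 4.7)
The plan is to reduce the proposition to Lemma~\ref{lem:B1_unique} and Proposition~\ref{prop:B-torsor_unique_rigid} by showing that the $B_{\le 1}$-torsor $\Bbar_{\le 1}$ induced by $\Bbar$ enjoys the same genericity as the chosen $\B_{\le 1}$: its extension classes $\bar c_\alpha \in H^1(\tS, L_\alpha)$ generate the cyclic $R$-module $H^1(\tS,L_\alpha)$ for every simple root $\alpha \in \Delta$.

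The main step, and the only place where the descent hypothesis enters, is establishing this generator property. Fix $\alpha \in \Delta$ and let $D \subset \tS_k$ be its $(-2)$-curve. Since $\phi$ contracts $D$ to a single point $x \in S_k$, the isomorphism $\Gbar \cong \phi^*\Gbar'$ exhibits $\Gbar|_D$ as the pullback to $D \cong \PP^1_{k(x)}$ of the $G$-torsor $\Gbar'|_x$ over $\Spec k(x)$; in particular, its Grothendieck--Harder reduction cocharacter to $T$ is trivial. I would then imitate the $G_\alpha$-reduction of Proposition~\ref{prop:trivial_on_D} with $\Bbar$ in place of $\B$: letting $\Bbar_\alpha$ be the $B_\alpha$-lift of $\T$ corresponding to the class $\bar c_\alpha$, the vanishing $H^1(D, L_{\beta|D}) = 0$ for $\beta \in \Phi^+ \setminus \{\alpha\}$ forces $\Bbar|_D \cong (\Bbar_\alpha \times^{B_\alpha} B)|_D$, so $\Gbar|_D$ is induced from the $\GL_2 \times \Gm^{8-d}$-torsor given by the trivial line bundles $L_{e_i|D}$ for $i \ge 3$ and the rank-two bundle $\overline E|_D$ fitting in
\begin{equation*}
0 \to \OO_{\PP^1_k}(-1) \to \overline E|_D \to \OO_{\PP^1_k}(1) \to 0
\end{equation*}
of class $\bar c_{\alpha|D} \in H^1(\PP^1_k, \OO_{\PP^1_k}(-2)) \cong k$. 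If $\bar c_{\alpha|D}$ vanished, this extension would split as $\OO(-1) \oplus \OO(1)$ and, after induction to $G$, would endow $\Gbar|_D$ with a nontrivial Harder--Narasimhan cocharacter, contradicting the previous sentence. Hence $\bar c_{\alpha|D} \ne 0$, and via the bijection \eqref{eq:isom_h1} together with Lemma~\ref{lem:h1} and Nakayama, $\bar c_\alpha$ generates $H^1(\tS,L_\alpha)$.

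With that in hand, Lemma~\ref{lem:B1_unique} supplies an automorphism $\sigma$ of $G$ with $\sigma|_T = \id_T$ such that $(\sigma|_{B_{\le 1}})_*\B_{\le 1} \cong \Bbar_{\le 1}$ as lifts of $\T$. Both $(\sigma|_B)_*\B$ and $\Bbar$ are then $B$-torsor lifts of $\Bbar_{\le 1}$. The uniqueness of such lifts, furnished by Proposition~\ref{prop:B-torsor_unique_rigid} after transport through $\sigma$---its proof uses only that the simple-root classes generate, which is preserved by $\sigma$---yields $\Bbar \cong (\sigma|_B)_*\B$. Extension of structure group from $B$ to $G$ gives $\Gbar \cong \sigma_*\G$, and the uniqueness of descent in Theorem~\ref{thm:main}(i) promotes this to $\Gbar' \cong \sigma_*\G'$.

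The hard part is the first step: converting the abstract descent hypothesis into the concrete algebraic statement that each $\bar c_\alpha$ is a unit multiple of $c_\alpha$. Everything after that is essentially formal bookkeeping that repackages the earlier uniqueness results under the automorphism $\sigma$.
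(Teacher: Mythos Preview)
Your proof is correct and follows the same overall strategy as the paper's: establish that each $\bar c_\alpha$ generates $H^1(\tS,L_\alpha)$ by deriving a contradiction with the descent hypothesis if some $\bar c_\alpha$ fails to, then invoke Lemma~\ref{lem:B1_unique}, Proposition~\ref{prop:B-torsor_unique_rigid}, and the uniqueness in Theorem~\ref{thm:main}(i).

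The only difference is in how the contradiction is obtained. The paper observes that if $\bar c_\alpha$ does not generate the cyclic module $H^1(\tS,L_\alpha)$, then already $\bar c_{\alpha,k}=0$, and since $H^1(D,L_{\beta|D})=0$ for all $\beta\in\Phi^+\setminus\{\alpha\}$, the restriction $\Bbar|_D$ is in fact induced all the way from the $T$-torsor $\T|_D$ (not merely from a $B_\alpha$-torsor). One then reads off
\[
\ad(\Gbar)_{|D}\;\cong\;\OO_{\PP^1_k}^{\,10-d}\oplus\bigoplus_{\beta\in\Phi}\OO_{\PP^1_k}\bigl((\beta,\alpha)\bigr),
\]
which is a nontrivial vector bundle by Krull--Remak--Schmidt; but descent forces $\ad(\Gbar)|_D$ to be pulled back from the point $x$, hence trivial. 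Your route through the $G_\alpha\cong\GL_2\times\Gm^{8-d}$ decomposition of Proposition~\ref{prop:trivial_on_D} and the Grothendieck--Harder cocharacter is equally valid, but invokes a bit more machinery where the paper gets by with the adjoint bundle and Krull--Remak--Schmidt on~$\PP^1$.
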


\begin{proof}
  Let $\Bbar_{\le 1} = \Bbar \times^B B_{\le 1}$. Let
  $\cbar_\alpha \in H^1(\tS, L_\alpha)$ for $\alpha \in \Delta$ be the
  extension classes corresponding to $\Bbar_{\le 1}$ as in
  (\ref{eq:c_alpha}).

  Suppose that each $\cbar_\alpha$ generates $H^1(\tS,L_\alpha)$.
  Then $\Bbar_{\le 1} \cong (\sigma_{|B_{\le 1}})_* \B_{\le 1}$ for
  some such automorphism $\sigma$ of $G$ by Lemma~\ref{lem:B1_unique}, and hence
  $\Bbar \cong (\sigma_{|B})_* \B$ by Proposition~\ref{prop:B-torsor_unique_rigid}. Therefore,
  $\Gbar \cong \sigma_* \G$. By the uniqueness in Theorem~\ref{thm:main} (i), this implies
  $\Gbar' \cong \sigma_* \G'$.

  Now suppose that $\cbar_\alpha$ does not generate $H^1(\tS,L_\alpha)$ for
  one $\alpha \in \Delta$. Let $D$ be the corresponding $(-2)$-curve on
  $\tS_k$. The $B$-torsor $\T \times^T B$ becomes isomorphic to $\Bbar$ when
  both are restricted to $D$, because there $\cbar_\alpha$ vanishes and the
  lifting over each $U_\beta$ with $\beta \in \Phi^+\setminus \{\alpha\}$ is
  unique by (\ref{eq:trivial_h1}). Hence $\Gbar_{|D} \cong \T_{|D} \times^T G$,
  and therefore  
  \begin{equation*}
    \ad(\Gbar)_{|D} \cong \T_{|D} \times^T \frakg \cong \OO_{\PP^1_k}^{10-d}
    \oplus \bigoplus_{\beta \in \Phi} \OO_{\PP^1_k}((\beta,\alpha)).
  \end{equation*}
  The integer $(\beta,\alpha)$ is nonzero at least for $\beta=\alpha$, so the
  vector bundle $\ad(\Gbar)_{|D}$ is nontrivial by the Krull--Remak--Schmidt
  theorem. Hence $\Gbar_{|D}$ is nontrivial, contradicting the assumption that
  $\Gbar$ descends to $S$.
\end{proof}

The main theorem in the introduction follows from these results: The descent
statement is contained in Theorem~\ref{thm:main} (i), and the uniqueness in
Proposition~\ref{prop:uniqueness}. The claims about infinitesimal rigidity
follow from Proposition~\ref{prop:B-torsor_unique_rigid},
Corollary~\ref{cor:G-torsor_unique_rigid} and Theorem~\ref{thm:main} (ii).

\section{Infinitesimal rigidity in one bad characteristic}
\label{sec:d4}

In the setting of Section~\ref{sec:del_pezzo}, we assume that the residue
field $k$ of $R$ is of characteristic $2$, and that $S$ is a family of
cubic surfaces over $R$ whose special fiber $S_k$ has one singularity,
which is of type $\mathbf{D}_4$.

For the geometry of cubic surfaces with a $\mathbf{D}_4$-singularity,
which were already studied by Schl\"afli \cite{10.2307/108795},
see \cite[\S 4]{MR2029868}, for example.
Up to the action of the Weyl groups, a root system of type $\mathbf{D}_4$
admits only one embedding into one of type $\mathbf{E}_6$.
Choosing one particular embedding allows us to describe $S_k$ as follows.
We may assume that its minimal desingularization
\begin{equation*}
  \tS_k \to S_k
\end{equation*}
is obtained from $\PP^2_k$ by blowing up three points
$x_1,x_2,x_3$ on a line and then three points $x_4,x_5,x_6$, where
$x_{i+3}$ lies on the $i$-th exceptional divisor, for $i=1,2,3$. Let
$h \in \Lambda$ be the pullback of $[\OO_{\PP^2_k}(1)]$, and let
$e_i \in \Lambda$ be the class of (the total
transform of) the $i$-th exceptional divisor $E_i$, for $i=1, \dots,
6$. Then the classes of the $(-2)$-curves are
\begin{equation*}
  \alpha_i=e_i-e_{i+3}\quad(i=1,2,3),\qquad \alpha_4=h-e_1-e_2-e_3
\end{equation*}
with the following Dynkin diagram:
\begin{equation*}
  \xymatrix@R=0.2in @C=0.2in{\alpha_1 \ar@{-}[r] & \alpha_4 \ar@{-}[r]\ar@{-}[d]
    & \alpha_2\\ & \alpha_3}
\end{equation*}
In particular, $\Phi$ is indeed a root system of type $\mathbf{D}_4$. 
The surface $S_k$ contains six lines. Three of them, namely the images of $E_4$, $E_5$, and $E_6$,
meet in the singularity. The other three lines $\ell_i$ for $i = 1, 2, 3$
are the images of curves in $\tS_k$ of class $h - e_i - e_{i+3}$; they may or may not meet in one
point on $S_k$.

Let $\B$ be a lift as in Lemma~\ref{lem:lift_to_B} of a universal
torsor $\T$, and let $\B_k:=\B_{|\tS_k}$.
In this situation, $\B_k$ may or may not be infinitesimally rigid:

\begin{proposition}\label{prop:D4_ad}
  We have
  \begin{equation*}
    h^1(\tS_k, \ad(\B_k))=
    \begin{cases}
      0 &\text{if $\ell_1\cap\ell_2\cap\ell_3 = \emptyset$ on $S_k$},\\
      1 &\text{otherwise}.
    \end{cases}
  \end{equation*}
\end{proposition}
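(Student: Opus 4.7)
The plan is to compute $H^1(\tS_k, \ad(\B_k))$ via the spectral sequence of the height filtration of $\frakb$ that drove Proposition~\ref{prop:B_unique}. Set $F^p := \B_k \times^B \frakb_{\ge p}$, where $\frakb_{\ge p} \subset \frakb$ is the $B$-submodule spanned by $\frakt$ (only at $p = 0$) together with the root spaces of height at least $\max(p, 1)$. The graded pieces are $F^0/F^1 \cong \Lambda^\vee \otimes_{\ZZ} \OO_{\tS_k}$ and $F^p/F^{p+1} \cong \bigoplus_{\gamma \in \Phi^+_{=p}} L_{\gamma, k}$ for $p \ge 1$, and Lemma~\ref{lem:h1} together with $H^1(\tS_k, \OO_{\tS_k}) = 0$ pins down the $E_1$-page of $E_1^{p, q} = H^{p+q}(F^p/F^{p+1}) \Rightarrow H^{p+q}(\tS_k, \ad(\B_k))$. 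The $d_1$ out of $\Lambda^\vee \otimes k$ into $\bigoplus_{\alpha \in \Delta} H^1(L_{\alpha, k}) = k^4$ is surjective because, in the basis $h, e_1, \ldots, e_6$ of $\Lambda$, the four simple roots $\alpha_i = e_i - e_{i+3}$ ($i=1,2,3$) and $\alpha_4 = h - e_1 - e_2 - e_3$ remain linearly independent over $\FF_2$; the higher $d_1$'s are the maps $\delta_n$ of Lemma~\ref{lem:cup_epi}.

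Direct enumeration of the twelve positive roots of $\mathbf{D}_4$ yields the following ranks over $\FF_2$: $\delta_2$ is a surjective $3 \times 4$ matrix whose $1$-dimensional kernel is generated by $\xi := s_{\alpha_1} + s_{\alpha_2} + s_{\alpha_3} + s_{\alpha_4}$ (with $s_\alpha$ the section of $L_{\alpha, k}$ vanishing on $D_\alpha$, the cancellation $\delta_2(\xi) = 0$ coming from $\epsilon_{\alpha, \beta} + \epsilon_{\beta, \alpha} = 0$); $\delta_3$ has matrix $\bigl(\begin{smallmatrix} 1 & 1 & 0 \\ 1 & 0 & 1 \\ 0 & 1 & 1 \end{smallmatrix}\bigr)$ of rank~$2$ (its $\ZZ$-determinant is $\pm 2$), with $1$-dimensional cokernel; and $\delta_4, \delta_5$ are both surjective. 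Consequently the only $E_2^{p, q}$ with $p + q = 1$ that survives is $E_2^{3, -2} \cong \mathrm{coker}(\delta_3) \cong k$. All higher differentials out of $E_r^{3, -2}$ have zero targets, and the only nonzero-source differentials into $E_r^{3, -2}$ are $d_2: E_2^{1, -1} = \ker(\delta_2) \cong k \to E_2^{3, -2}$ and $d_3: E_3^{0, 0} = \ker(\delta_1) \cong k^3 \to E_3^{3, -2}$. Hence $h^1(\tS_k, \ad(\B_k)) = 1$ if both $d_2$ and $d_3$ vanish, and $0$ otherwise.

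The remaining and main task is to match these Massey-type differentials with the geometric condition on the three $(-1)$-curves $\ell_1, \ell_2, \ell_3$. I would compute $d_2$ by lifting $\xi$ explicitly to a global section $\tilde \xi \in H^0(\tS_k, F^1/F^3)$ and reading the secondary obstruction in $\bigoplus_{\gamma \in \Phi^+_{=3}} H^1(L_{\gamma, k})/\mathrm{im}(\delta_3)$ as a triple bracket of the $c_{\alpha, k}$'s against $\xi$. A parallel but longer iterated-bracket calculation, starting from $\ker \delta_1 \cong (\Lambda/\langle \Delta \rangle)^\vee \otimes k$, handles $d_3$. The main obstacle is then turning the heuristic ``Massey product $=$ incidence of lines'' into a precise statement: the height-$3$ root classes $\gamma_{ij} = \alpha_i + \alpha_j + \alpha_4 = h - e_k - e_{i+3} - e_{j+3}$ (with $\{i,j,k\} = \{1,2,3\}$) differ from the line classes $\ell_k = h - e_k - e_{k+3}$ only by effective combinations of $(-2)$-curves, and $\ker\delta_1$ is naturally dual to the span of the $\ell_k$ in $\Lambda/\langle\Delta\rangle$. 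Via Serre duality and an explicit \v{C}ech computation on a cover of $\tS_k$ adapted to the $D_\alpha$ and the $\ell_k$, the simultaneous vanishing of $d_2$ and $d_3$ should be equivalent to the existence of a common point on $\ell_1, \ell_2, \ell_3$ in $S_k$, yielding the claimed dichotomy for $h^1$.
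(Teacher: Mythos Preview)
Your spectral‐sequence setup is sound and your $E_1$–$E_2$ bookkeeping is correct: in the $\mathbf{D}_4$ case over $\FF_2$ the only surviving $E_2$‐term in total degree~$1$ is $E_2^{3,-2}\cong k$, and the only differentials that can kill it are the $d_2$ from $E_2^{1,-1}\cong k$ and the $d_3$ from $E_3^{0,0}\cong k^3$. So the proposition really does reduce to the dichotomy ``$d_2=d_3=0$'' versus ``one of them nonzero''.

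The genuine gap is that you do not carry out this last reduction. The sentences ``I would compute $d_2$ by lifting $\xi$\dots'', ``a parallel but longer iterated‐bracket calculation\dots handles $d_3$'', and ``the simultaneous vanishing\dots \emph{should be} equivalent to the existence of a common point on $\ell_1,\ell_2,\ell_3$'' are the entire content of the proposition, and they remain at the level of a plan. Computing these triple (and quadruple) Massey‐type products against the classes $c_{\alpha,k}$ on a rational surface, and then identifying the answer with an incidence condition among three $(-1)$‐curves, is not a routine \v{C}ech exercise: you would need to produce explicit lifts of $\xi$ and of elements of $\ker\delta_1$ through two or three levels of the filtration, track the signs $\epsilon_{\alpha,\beta}$ carefully in characteristic~$2$, and then exhibit a concrete cocycle identity that degenerates exactly when the $\ell_i$ meet. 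None of this is done, and there is no lemma in the paper you can cite for it.

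The paper takes a completely different route that avoids the Massey products altogether. It restricts the problem to a smooth anticanonical elliptic curve $C\subset\tS_k$ (Lemma~\ref{lem:Hi_elliptic} and Lemma~\ref{lem:S_to_C} show the relevant $H^i$ are unchanged), and then computes the associated vector bundle $\ad_n(\B_{\le 3,k})_{|C}$ explicitly. Using a concrete identification of $B_{\le 2}^{\ad}$ with a fibered product of Borels in $\PGL_3$, the ten‐dimensional $U'$‐module $\fraku_{\le 3}$ is decomposed by hand, and the bundle is shown to be $\F_3^2\oplus\F_2\oplus\Frob^*\F_2$ (Proposition~\ref{prop:D4_ad_C}). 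The dichotomy then comes from Atiyah's classification: $\Frob^*\F_2\cong\F_2$ if $C$ is ordinary and $\Frob^*\F_2\cong\OO_C^2$ if $C$ is supersingular, which changes $h^1$ by exactly one. Finally, the link to the three lines is made by writing down the two normal forms (\ref{eq:D4_cubic_ordinary}) and (\ref{eq:D4_cubic_supersingular}) for $S_k$ and observing that a generic hyperplane section is ordinary in the first case and supersingular in the second. This elliptic‐curve argument replaces your higher differentials with a single Frobenius computation on a curve; if you want to salvage the spectral‐sequence approach, you will need an equally explicit substitute for that step.
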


To prove this, we follow the strategy of Lemma~\ref{lem:cup_epi} and
Proposition~\ref{prop:B_unique} (ii). These and what follows take place only
in the special fiber $\tS_k$. Replacing $k$ by its algebraic closure
and $\B_k$ by its base change, we may assume that $k$ is algebraically
closed. By Bertini's theorem \cite[Remark~II.8.18.1]{MR0463157}, we
can intersect an anticanonical embedding $S_k \subset \PP^3_k$ with a
suitable plane in $\PP^3_k$ to obtain a smooth curve of degree $3$ in
that plane, not containing the $\mathbf{D}_4$ singularity. Its
preimage $C \subset \tS_k$ is an elliptic curve.

Since $k$ is algebraically closed, $S_k$ is isomorphic to the surface defined by
\begin{equation}\label{eq:D4_cubic_ordinary}
  x_0(x_1+x_2+x_3)^2-x_1x_2x_3=0
\end{equation}
if $\ell_1\cap\ell_2\cap\ell_3=\emptyset$, and to the surface defined by
\begin{equation}\label{eq:D4_cubic_supersingular}
  x_0(x_1+x_2+x_3)^2+x_1x_2(x_1+x_2)=0
\end{equation}
if $\ell_1\cap\ell_2\cap\ell_3\ne\emptyset$; see
\cite[Lemma~4]{MR80f:14021} and \cite[Remark~4.1]{MR2029868}. In both
cases, the singularity is $(1:0:0:0)$, hence a plane not containing it
is defined by $x_0=a_1x_1+a_2x_2+a_3x_3$ for some $a_1,a_2,a_3 \in
k$. Whenever its intersection with $S_k$ is an elliptic curve $C$,
\cite[Proposition~IV.4.21]{MR0463157} shows that $C$ is ordinary in
case (\ref{eq:D4_cubic_ordinary}) and supersingular in case
(\ref{eq:D4_cubic_supersingular}).

\begin{lemma}\label{lem:Hi_elliptic}
  Let $\alpha \in \Phi^+$ be a positive root. Then the restriction
  maps $H^i(\tS_k, L_{\alpha,k}) \to H^i(C,L_{\alpha,k|C})$ are
  isomorphisms.
\end{lemma}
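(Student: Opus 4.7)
The plan is to exploit the fact that the curve $C$ lies in the anticanonical linear system on $\tS_k$: indeed, $C$ is the pullback under $\tS_k \to S_k \subset \PP^3_k$ of a hyperplane section, and the map is anticanonical, so $\OO_{\tS_k}(C) \cong \omega_{\tS_k}^{-1}$ and thus $\OO_{\tS_k}(-C) \cong \omega_{\tS_k}$. Tensoring the ideal sheaf sequence of $C \subset \tS_k$ with $L_{\alpha,k}$ yields the short exact sequence
\begin{equation*}
  0 \to L_{\alpha,k} \otimes \omega_{\tS_k} \to L_{\alpha,k} \to L_{\alpha,k|C} \to 0.
\end{equation*}
The associated long exact cohomology sequence will give the claim for $i=0,1$ once we know that $H^i(\tS_k, L_{\alpha,k} \otimes \omega_{\tS_k})$ vanishes for $i=0,1,2$; for $i \ge 2$ the statement is trivial because $C$ is a curve and $H^2(\tS_k, L_{\alpha,k}) = 0$ by Lemma~\ref{lem:h1}.

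To verify this vanishing, I would apply Serre duality on the smooth projective surface $\tS_k$, which identifies $H^i(\tS_k, L_{\alpha,k} \otimes \omega_{\tS_k})$ with $H^{2-i}(\tS_k, L_{-\alpha,k})^{\vee}$. So it suffices to show that $H^j(\tS_k, L_{-\alpha,k}) = 0$ for $j = 0, 1, 2$. First, $H^0(\tS_k, L_{-\alpha,k}) = 0$ because $-\alpha$ is not effective on $\tS_k$ (by definition of $\Phi^+$, together with the observation that a non-zero effective class cannot have class $-\alpha$ since $\alpha$ is positive). Second, another application of Serre duality gives $H^2(\tS_k, L_{-\alpha,k}) \cong H^0(\tS_k, L_{\alpha,k} \otimes \omega_{\tS_k})^{\vee}$, and the latter vanishes because the class $\alpha + K_{\tS_k}$ has intersection number $-d < 0$ with the nef class $-K_{\tS_k}$, hence is not effective.

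Finally, Riemann--Roch combined with $(-\alpha, -\alpha) = -2$ and $(-\alpha, -K_{\tS_k}) = 0$ gives $\chi(\tS_k, L_{-\alpha,k}) = \chi(\OO_{\tS_k}) - 1 = 0$, so the vanishing of $H^0$ and $H^2$ forces $H^1(\tS_k, L_{-\alpha,k}) = 0$ as well. Substituting back, $H^i(\tS_k, L_{\alpha,k} \otimes \omega_{\tS_k}) = 0$ for all $i$, and the long exact sequence then produces the desired isomorphisms $H^i(\tS_k, L_{\alpha,k}) \xrightarrow{\sim} H^i(C, L_{\alpha,k|C})$ for $i = 0, 1$. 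I do not anticipate a genuine obstacle here; the only thing requiring care is making sure one uses nefness of $-K_{\tS_k}$ (valid on a weak del Pezzo surface) to rule out effectivity of $\alpha + K_{\tS_k}$, and that $d \ge 1$ so that this intersection number is strictly negative.
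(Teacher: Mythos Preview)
Your proof is correct and follows essentially the same route as the paper: both use the ideal sheaf sequence of $C$ twisted by $L_{\alpha,k}$, identify $\OO_{\tS_k}(-C)$ with $\omega_{\tS_k}$, and invoke Serre duality to reduce to the vanishing of $H^{2-i}(\tS_k, L_{-\alpha,k})$. The paper states this vanishing with a reference to Friedman--Morgan, whereas you spell out the details (non-effectivity of $-\alpha$ and of $\alpha + K_{\tS_k}$, plus Riemann--Roch), which is fine.
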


\begin{proof}
  As in \cite[Lemma~3.3]{MR1941576}, the long exact sequence arising
  from
  \begin{equation*}
    0 \to \OO_{\tS_k}(-C) \otimes L_{\alpha,k} \to L_{\alpha,k} \to L_{\alpha,k|C} \to 0
  \end{equation*}
  together with the Serre duality isomorphisms
  \begin{equation*}
    H^i(\tS_k,\OO_{\tS_k}(-C)\otimes L_{\alpha,k}) \to H^{2-i}(\tS_k,
    L_{-\alpha,k})^\vee=0
  \end{equation*}
  give the result.
\end{proof}

Let $U_{\le n}$ be the kernel of the canonical projection $B_{\le n}
\to T$. The resulting short exact sequence
\begin{equation*}
  0 \to U_{\le n} \to B_{\le n} \to T \to 1
\end{equation*}
induces the short exact sequence of Lie algebras
\begin{equation}\label{eq:seq_lie}
  0 \to \fraku_{\le n} \to \frakb_{\le n} \to \frakt \to 0.
\end{equation}
Let $\ad_n(\B_{\le n,k})$ be the vector bundle associated with the
$B_{\le n,k}$-torsor $\B_{\le n,k}$ via the $B_{\le n,k}$-module
$\fraku_{\le n,k}$. Using~(\ref{eq:seq_lie}), we obtain the short
exact sequence
\begin{equation}\label{eq:ad_n_to_ad}
  0 \to \ad_n(\B_{\le n,k}) \to \ad(\B_{\le n,k}) \to \Lambda^\vee \otimes_\ZZ \OO_{\tS_k} \to 0.
\end{equation}

\begin{lemma}\label{lem:S_to_C}
  The restriction maps
  \begin{equation*}
    H^i(\tS_k, \ad_n(\B_{\le n,k})) \to H^i(C,\ad_n(\B_{\le n,k})_{|C})
  \end{equation*}
  are isomorphisms.
\end{lemma}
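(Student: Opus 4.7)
The plan is to follow the approach of Lemma~\ref{lem:Hi_elliptic}. Associated to the effective divisor $C \subset \tS_k$, there is the short exact sequence
\[ 0 \to \OO_{\tS_k}(-C) \otimes \ad_n(\B_{\le n,k}) \to \ad_n(\B_{\le n,k}) \to \ad_n(\B_{\le n,k})_{|C} \to 0, \]
and its long exact cohomology sequence reduces the claim to the vanishing
\[ H^i(\tS_k, \OO_{\tS_k}(-C) \otimes \ad_n(\B_{\le n,k})) = 0 \qquad (i = 0, 1, 2). \]

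To obtain this vanishing, I would exploit the structure of the $B_{\le n}$-module $\fraku_{\le n}$. The descending central series of $U_{\le n}$ induces a filtration of $\fraku_{\le n}$ by $B_{\le n}$-submodules whose graded pieces are $\bigoplus_{\alpha \in \Phi^+_{=m}} \frakg_\alpha$ for $m = 1, \dots, n$; since $[\fraku, \fraku_{\ge m}] \subset \fraku_{\ge m+1}$, the unipotent radical acts trivially on each graded piece, so the $B_{\le n}$-action factors through $T$. Passing to associated vector bundles, this endows $\ad_n(\B_{\le n,k})$ with a filtration whose successive quotients are direct sums of the line bundles $L_{\alpha,k}$, indexed by positive roots $\alpha$ of height $\le n$.

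Tensoring the filtration with $\OO_{\tS_k}(-C)$ and inducting on its length through the resulting long exact cohomology sequences, the required vanishing reduces to the per-root statement that $H^i(\tS_k, \OO_{\tS_k}(-C) \otimes L_{\alpha,k}) = 0$ for every $i$ and every $\alpha \in \Phi^+$. This is precisely the intermediate content of the proof of Lemma~\ref{lem:Hi_elliptic}, established there via Serre duality together with the fact that $-\alpha$ has vanishing cohomology on $\tS_k$ (being anti-effective, with the $H^1$-vanishing following from Riemann--Roch). I do not anticipate a real obstacle: the whole argument is a routine repackaging of Lemma~\ref{lem:Hi_elliptic} once the filtration of $\ad_n(\B_{\le n,k})$ by line bundles $L_{\alpha,k}$ is in hand. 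The only point requiring some care is checking that the Lie-algebra filtration of $\fraku_{\le n}$ is indeed stable under $B_{\le n}$ with the expected torus-equivariant quotients, which is standard.
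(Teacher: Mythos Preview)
Your proposal is correct and follows essentially the same approach as the paper: both use the composition series of $\ad_n(\B_{\le n,k})$ with successive quotients $L_{\alpha,k}$ for $\alpha \in \Phi^+$ and reduce to Lemma~\ref{lem:Hi_elliptic}. The only cosmetic difference is that the paper applies the five lemma directly to the restriction maps in cohomology, whereas you phrase the induction via vanishing of $H^i(\tS_k, \OO_{\tS_k}(-C) \otimes L_{\alpha,k})$; these are equivalent reorganizations of the same argument.
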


\begin{proof}
  Since $\ad_n(\B_{\le n,k})$ has a composition series with
  composition factors $L_\alpha$ for some $\alpha \in \Phi^+$, this
  follows from Lemma~\ref{lem:Hi_elliptic} and the five lemma.
\end{proof}

For every $\alpha \in \Phi$, we denote by $\exp_\alpha$ the
exponential map from the underlying additive group of $\frakg_\alpha$
onto $U_\alpha \subset G$.

\begin{lemma}\label{lem:exp_A2}
  Let $\alpha,\beta \in \Phi$ be nonproportional roots.  Then the
  adjoint action of $U_\alpha$ on $\frakg$ satisfies
  \begin{equation*}
    \exp_\alpha(x)\cdot y = y+[x,y]
  \end{equation*}
  for all $x \in \frakg_\alpha$ and $y \in \frakg_\beta$, where $[x,y]
  \in \frakg_{\alpha+\beta}$ is $0$ if $\alpha+\beta \notin \Phi$.
\end{lemma}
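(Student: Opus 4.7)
The plan is to regard the adjoint action of $\exp_\alpha(x)$ on $y \in \frakg_\beta$ as the value at $t=1$ of the polynomial map $t \mapsto \exp_\alpha(tx) \cdot y$, to constrain the weight components of this polynomial via $T$-equivariance, and then to use the simply-laced hypothesis to truncate the expansion at linear order.

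Concretely, I would view the one-parameter subgroup $\exp_\alpha \colon \GaR \to U_\alpha \subset G$ acting on a fixed $y \in \frakg_\beta$ as a morphism of $R$-schemes $\GaR \to \frakg$; this is automatically polynomial in the parameter $t$, so there exist elements $z_n(x,y) \in \frakg$ with
\begin{equation*}
  \exp_\alpha(tx) \cdot y = y + t\,z_1(x,y) + t^2 z_2(x,y) + \cdots.
\end{equation*}
Conjugation by $T$ acts on $U_\alpha$ through the character $\alpha$, so $\exp_\alpha$ is $T$-equivariant once $T$ is made to act on $\GaR$ via $\alpha$. Together with $y \in \frakg_\beta$, comparing coefficients of $t^n$ on both sides of the resulting $T$-equivariance relation forces $z_n(x,y) \in \frakg_{\beta+n\alpha}$.

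Next I would invoke the simply-laced hypothesis to kill all terms beyond the linear one. For two nonproportional roots $\alpha,\beta \in \Phi$ in a simply-laced root system, the $\alpha$-string through $\beta$ has length at most two: indeed, if both $\beta-\alpha$ and $\beta+\alpha$ were roots they would have the same length as $\beta$, and the parallelogram identity
\begin{equation*}
  |\beta+\alpha|^2+|\beta-\alpha|^2 = 2|\beta|^2+2|\alpha|^2
\end{equation*}
would then force $|\alpha|=0$. Hence $\beta+n\alpha$ is nonzero and is not a root for any $n \ge 2$, so $\frakg_{\beta+n\alpha}=0$ and $z_n(x,y)=0$ for $n \ge 2$.

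Finally, differentiating the truncated identity $\exp_\alpha(tx)\cdot y = y + t\,z_1(x,y)$ in $t$ at $t=0$ identifies $z_1(x,y) = [x,y]$, and setting $t=1$ yields the lemma. When $\alpha+\beta \notin \Phi$ the target space $\frakg_{\beta+\alpha}$ is zero, forcing $z_1(x,y)=0$, in agreement with the convention $[x,y]=0$ in that case. The one point that requires care, which I expect to be the main (if minor) obstacle, is that the polynomial expansion must hold integrally over $R$ rather than only in characteristic zero; this is built into the standard construction of split reductive group schemes and their adjoint representations, and the truncated formula $y+[x,y]$ involves no denominators, so no divided-power subtleties intervene.
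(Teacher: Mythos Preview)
Your proof is correct but takes a different route from the paper. The paper reduces to a rank-two situation: it lets $G' \subset G$ be the centralizer of the reduced identity component of $\ker(\alpha) \cap \ker(\beta) \subset T$, passes to the adjoint quotient $G'' = G'/Z(G')$, observes that $G'' \cong \PGL_3$ or $(\PGL_2)^2$, and verifies the formula there by direct matrix computation. Your argument instead stays in $G$: you use the $T$-weight decomposition of $\frakg$ to force the $t^n$-coefficient of the polynomial $t \mapsto \exp_\alpha(tx)\cdot y$ into $\frakg_{\beta+n\alpha}$, and then the simply-laced root-string bound to kill all terms with $n \ge 2$. Your approach is more conceptual and avoids the rank-two case split; the paper's reduction has the advantage that integrality over $R$ is automatic (everything becomes a concrete matrix identity), whereas you must appeal to the weight decomposition of split reductive group schemes over a general base---but as you note, this is standard and the truncated formula involves no denominators. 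One minor point: your parallelogram identity shows that $\beta-\alpha$ and $\beta+\alpha$ cannot both be roots, but to exclude $\beta+2\alpha$ you also need either the unbroken-string property (so that $\beta+\alpha$ would be a root too, and you apply the parallelogram with $\beta$ replaced by $\beta+\alpha$) or the direct norm computation $(\beta+2\alpha,\beta+2\alpha)=(\beta,\beta)$, which forces $(\alpha,\beta)$ outside the simply-laced range; either is immediate.
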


\begin{proof}
  Let $G' \subset G$ be the centralizer of the reduced identity
  component of $\ker(\alpha) \cap \ker(\beta) \subset T$. Then $G'$ is
  a reductive group with maximal torus $T$ and root datum $\Phi \cap
  \langle \alpha,\beta \rangle \subset \Lambda$ of semisimple rank $2$.
  We have $U_\alpha \subset G'$.

  Let $G'' = G'/Z'$, where $Z'$ is the center of $G'$.  Since the
  exponential maps $\exp_\alpha$ for $G, G', G''$ agree, it suffices
  to prove the claim in $G''$. Since $G''$ is isomorphic to $\PGL_3$
  or $(\PGL_2)^2$, this is an easy computation.
\end{proof}

By \cite[Theorem~5(i)]{MR0131423}, there is an indecomposable vector
bundle on $C$ of rank $r$ and degree $0$, unique up to
isomorphism, which we denote by $\F_r$.

\begin{proposition}\label{prop:D4_ad_C}
  We have $\ad_n(\B_{\le 3,k})_{|C} \cong \F_3^2\oplus \F_2\oplus
  \Frob^*\F_2$, where $\Frob: C \to C$ is the (absolute) Frobenius morphism.
\end{proposition}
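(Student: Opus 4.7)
The plan is to compute $\ad_n(\B_{\le 3, k})|_C$ via explicit Čech cocycles on $C$, and then to apply Atiyah's classification of vector bundles on elliptic curves \cite{MR0131423}.

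First, every line bundle $L_{\alpha, k}|_C$ for $\alpha \in \Phi^+$ is trivial: each such $\alpha$ is a non-negative integer combination of the $(-2)$-curves $D_i$, and each $D_i$ is disjoint from $C$ because $C$ is the preimage of a plane section of $S_k$ avoiding the $\mathbf{D}_4$ singular point. Consequently, the height filtration on $\fraku_{\le 3, k}$ induces a filtration of $\ad_n(\B_{\le 3, k})|_C$ whose associated graded is $\OO_C^{10}$. By Atiyah's classification together with Krull--Remak--Schmidt, the bundle is therefore a direct sum $\bigoplus_i \F_{r_i}$ of Atiyah bundles with $\sum_i r_i = 10$.

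Second, trivialize $\B_{\le 3, k}|_C$ on a Čech cover $C = U_0 \cup U_1$. Since $\T|_C$ is trivial, the transition function lies in $U_{\le 3, k}(U_0 \cap U_1)$, and modulo $U_{\ge 2}$ it factors as $\prod_{i=1}^4 \exp(x_{\alpha_i} y_i)$ with each $y_i$ representing a nonzero class $c_{\alpha_i}|_C \in H^1(C, \OO_C) = k$ (nonzero by Lemma~\ref{lem:Hi_elliptic}). Using Lemma~\ref{lem:exp_A2} together with the explicit $\mathbf{D}_4$ Lie-bracket relations (central node $\alpha_4$ joined to three leaves $\alpha_1, \alpha_2, \alpha_3$), one expands this transition function into a $10 \times 10$ lower-triangular unipotent matrix on $\fraku_{\le 3, k}$ with entries polynomial in $y_1, \dots, y_4$ of total degree at most two.

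Third, exhibit specific sub- and quotient bundles realizing the claimed decomposition. The three height $3$ roots contribute a trivial subbundle $\OO_C^3$, and two copies of the rank $3$ Atiyah bundle $\F_3$ emerge as summands of rank $4$ subbundles of shape $\F_3 \oplus \OO_C$ threading pairs of arms of the Dynkin diagram (for instance the span of $x_{\alpha_i}$, $x_{\alpha_i + \alpha_4}$, $x_{\alpha_i + \alpha_j + \alpha_4}$, $x_{\alpha_i + \alpha_k + \alpha_4}$ for $\{i, j, k\} = \{1, 2, 3\}$). The residual rank $4$ part decomposes as $\F_2 \oplus \Frob^*\F_2$: one $\F_2$ reflects the first-order Lie-bracket extension coming from the simple-root cocycles, while $\Frob^*\F_2$ captures the quadratic contributions $y_i y_j$ that arise from composing non-commuting root-group exponentials, whose Čech class on $C$ represents the Frobenius pullback of the corresponding class in $H^1(C, \OO_C)$.

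The main obstacle is the second-order analysis of the residual rank $4$ summand: both $\F_2^2$ and $\F_2 \oplus \OO_C^2$ have the correct trivial composition factors on $C$, and distinguishing them requires a careful tracking of the relevant extension classes. This parallels the failure of Lemma~\ref{lem:cup_epi} for $\mathbf{D}_4$ in characteristic $2$: the $3 \times 3$ extension-class matrix governing height $2$ to height $3$ roots has rank $2$ rather than $3$, and its one-dimensional defect is precisely what contributes the Frobenius-twisted summand $\Frob^*\F_2$.
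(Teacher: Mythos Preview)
Your sketch has the right shape but skips the step that makes the computation tractable, and the description of the summands is not yet a proof. The paper's argument hinges on a fact you never state: after restricting to $C$ and trivializing the $L_{\alpha}|_C$ via the sections $e_\alpha$ of Lemma~\ref{lem:e_alpha-f_alpha}, the four simple-root extension classes $c_{\alpha_1}|_C,\dots,c_{\alpha_4}|_C$ become \emph{equal} in $H^1(C,\OO_C)$, not merely nonzero. This is deduced from the compatibilities (i)--(iii) in that lemma. It is what allows one to reduce the structure group first to the diagonal $U_{\GL_3}$ inside $U_{\le 2}$ (equality of $c_1,c_2,c_3$) and then to the two-parameter subgroup $U'=\{\begin{smallmatrix}1&\lambda&\mu\\0&1&\lambda\\0&0&1\end{smallmatrix}\}$ (equality with $c_4$). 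Without this normalization your ``$y_iy_j$'' terms are not Frobenius pullbacks of anything, and your proposed invariant subspaces need not be subbundles.

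Once that reduction is done, the paper does not chase \v Cech cocycles further but decomposes $\fraku_{\le 3}$ as a module for the reduced structure group. As a $U_{\GL_3}$-module it splits as $\fraku_{\le 3}^1\oplus\fraku_{\le 3}^2\oplus\fraku_{\le 3}'$, with $\fraku_{\le 3}^i=\langle x_{\alpha_i},\,x_{\alpha_i+\alpha_4},\,x_{\alpha_i+\alpha_j+\alpha_4}+x_{\alpha_i+\alpha_k+\alpha_4}\rangle$ for $i=1,2$ giving the two $\F_3$'s; note these are \emph{three}-dimensional and disjoint, unlike your overlapping rank-$4$ subbundles. The residual four-dimensional $\fraku_{\le 3}'=\langle x_{\alpha_4},\,z_1,\,z_2,\,z_3\rangle$ then splits over the smaller group $U'$ as $\langle z_1,z_2\rangle\oplus\langle x_{\alpha_4}+z_1,\,z_3\rangle$. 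The first factor is the standard $2$-dimensional representation and gives $\F_2$; the second becomes $\Frob^*\F_2$ because $u_\lambda v_\lambda$ acts on it by $\begin{smallmatrix}1&0\\\lambda^2&1\end{smallmatrix}$, the $\lambda^2$ coming from the iterated bracket $[\lambda x_{\alpha_i},[\lambda x_{\alpha_j},x_{\alpha_4}]]$ in Lemma~\ref{lem:exp_A2}. Your heuristic ``quadratic contributions $y_iy_j$'' points in the right direction but does not pin down this specific $2$-dimensional invariant subspace, which is the whole content of the claim.
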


\begin{proof}
  Let $B^{\ad}$ be the quotient of $B$ modulo the center $Z$ of
  $G$. Note that $\Hom(B^{\ad}, \Gm)=\langle \Phi\rangle \subset
  \Lambda$. Let $B_{\le n}^{\ad}$ denote the quotient of $B_{\le n}$
  modulo the image of $Z$. Let $\B_{\le n}^{\ad}$ denote the torsor
  induced from $\B_{\le n}$ by extension of structure group along the
  projection $B_{\le n} \to B_{\le n}^{\ad}$.

  The action of $B_{\le 3}$ on $\fraku_{\le 3}$ factors through the
  quotient $B_{\le 2}^{\ad}$ of $B_{\le 3}$. 
  Let $B_{\PGL_3}$ be the standard Borel subgroup of classes of upper
  triangular matrices in $\PGL_3$ over $R$. The three embeddings of
  the Dynkin diagram $\mathbf{A}_2$ into $\mathbf{D}_4$ yield three
  group homomorphisms $p_i: B \to B_{\PGL_3}$ as follows.

  The Lie algebra of $B$ has the root space decomposition
  \begin{equation*}
    \frakb = \frakt \oplus \bigoplus_{\alpha \in \Phi^+} \frakg_\alpha.
  \end{equation*}
  For $i=1, \dots, 4$, we choose nonzero $x_{\alpha_i} \in
  \frakg_{\alpha_i}$. For $i=1,2,3$, we define
  \begin{equation*}
    x_{\alpha_i+\alpha_4}
    := [x_{\alpha_4},x_{\alpha_i}] \in \frakg_{\alpha_i+\alpha_4}.
  \end{equation*}
  For $i \ne j \in \{1, 2, 3\}$, we define
  \begin{equation*}
    x_{\alpha_i+\alpha_j+\alpha_4} :=
    [x_{\alpha_j+\alpha_4},x_{\alpha_i}]=[x_{\alpha_i+\alpha_4},x_{\alpha_j}] \in \frakg_{\alpha_i+\alpha_j+\alpha_4}.
  \end{equation*}
  As in (\ref{eq:epsilon_relation}), we have followed the sign convention
  from \cite[12.14]{MR1642713} here, i.e.,
  \begin{equation*}
     [x_\alpha,x_\beta] = \epsilon_{\alpha,\beta} x_{\alpha+\beta}
  \end{equation*}
  whenever $\alpha,\beta,\alpha+\beta \in \Phi^+$, where in this case
  $\epsilon_{\alpha,\beta} = (-1)^{f(\alpha,\beta)}$ for the bilinear form $f$
  defined by
  \begin{equation*}
    f(\alpha_i,\alpha_j) =
    \begin{cases}
      (\alpha_i,\alpha_j), &i<j,\\
      \frac 1 2 (\alpha_i,\alpha_i), &i=j,\\
      0, &i>j
    \end{cases}
  \end{equation*}
  for $i,j \in \{1,\dots, 4\}$. Note that this turns out to be
  independent of the ordering of $\alpha_1,\alpha_2,\alpha_3$.

  For $i=1,2,3$, let $p_i: B \to B_{\PGL_3}$ be the surjective
  homomorphism that vanishes on $U_\alpha$ for all $\alpha \in
  \Phi^+\setminus \{\alpha_i,\alpha_4,\alpha_i+\alpha_4\}$ and on
  $\ker(\alpha_i) \cap \ker(\alpha_4) \subset T$. Note that
  $\ker(\alpha_i) \cap \ker(\alpha_4)$ is central modulo these
  $U_\alpha$, so the subgroup generated by these $U_\alpha$ and
  $\ker(\alpha_i) \cap \ker(\alpha_4)$ is normal in $B$.  More
  precisely, $p_i$ corresponds to the Lie algebra homomorphism $\frakb
  \to \frakb_{\PGL_3}$ defined by
  \begin{align*}
    \frakt = \Lambda^\vee \otimes R &\to \frakb_{\PGL_3},\\
    \rho \otimes 1 &\mapsto
    \begin{pmatrix}
      \langle \rho,\alpha_i+\alpha_4 \rangle & 0 & 0\\
      0 & \langle \rho,\alpha_i \rangle & 0\\
      0 & 0 & 0
    \end{pmatrix}
  \end{align*}
  and
  \begin{align*}
    \frakg_{\alpha_i} &\to \frakb_{\PGL_3},&\frakg_{\alpha_4} &\to \frakb_{\PGL_3},&\frakg_{\alpha_i+\alpha_4} &\to \frakb_{\PGL_3},\\
    x_{\alpha_i} &\mapsto
    \begin{pmatrix}
      0 & 0 & 0\\
      0 & 0 & 1\\
      0 & 0 & 0
    \end{pmatrix}&
    x_{\alpha_4} &\mapsto
    \begin{pmatrix}
      0 & 1 & 0\\
      0 & 0 & 0\\
      0 & 0 & 0
    \end{pmatrix},&
    x_{\alpha_i+\alpha_4} &\mapsto
    \begin{pmatrix}
      0 & 0 & 1\\
      0 & 0 & 0\\
      0 & 0 & 0
    \end{pmatrix},   
  \end{align*}
  vanishing on all other $\frakg_\alpha$ with $\alpha \in \Phi^+$.
  
  By construction, the product $(p_1,p_2,p_3) : B \to B_{\PGL_3}^3$
  factors through a homomorphism
  \begin{equation}\label{eq:hom_BPGL3}
    B_{\le 2}^{\ad} \to B_{\PGL_3}^3.
  \end{equation}
  Let $p : B_{\PGL_3} \to B_{\PGL_2}$ be the projection $(a_{r,s})_{1
    \le r,s \le 3} \mapsto (a_{r,s})_{1 \le r,s \le 2}$ onto the upper
  left $(2\times 2)$-minor. Then (\ref{eq:hom_BPGL3}) induces an isomorphism
  \begin{equation}\label{eq:B_2_ad}
    B_{\le 2}^{\ad} \cong \{(b_1,b_2,b_3) \in B_{\PGL_3}^3 \mid p(b_1)=p(b_2)=p(b_3)\}.
  \end{equation}
  Note that $U_{\le 2}$ can be identified with the unipotent radical
  of $B_{\le 2}^{\ad}$.  Identifying also the unipotent radical
  $U_{\GL_3}$ of the standard Borel subgroup $B_{\GL_3} \subset \GL_3$
  with the unipotent radical of $B_{\PGL_3}$, we obtain an isomorphism
  \begin{equation*}
    U_{\le 2} \cong \{(u_1,u_2,u_3) \in U_{\GL_3}^3 \mid p(u_1)=p(u_2)=p(u_3)\}.
  \end{equation*}

  We choose $e_\alpha$ and $f_\alpha$ for $\alpha \in \Phi^+$ as in
  Lemma~\ref{lem:e_alpha-f_alpha}. The sections $e_\alpha$ define isomorphisms
  $L_{\alpha|C} \cong \OO_C$ since $C$ does not intersect the vanishing locus
  of $e_\alpha$, which consists of $(-2)$-curves. This gives a reduction of
  structure group of $\B_{\le 2|C}^{\ad}$ to a $U_{\le 2|C}$-torsor
  $\U_{\le 2}$, which we have only on $C$.

  Let $\V$ be the vector bundle of rank $2$ over $C$ associated with
  the $U_{\le 2|C}$-torsor $\U_{\le 2}$ via the common composition
  \begin{equation*}
    p\circ p_1=p\circ p_2=p\circ p_3: U_{\le 2} \to U_{\GL_3} \to U_{\GL_2} \subset \GL_2.
  \end{equation*}
  Using the above identifications, $\V$ is by construction an extension
  \begin{equation*}
    0 \to \OO_C \to \V \to \OO_C \to 0
  \end{equation*}
  whose class $c_4 \in H^1(C, \OO_C)$ is given by
  \begin{equation*}
    e_{\alpha_4|C} \cdot c_4 = c_{\alpha_4|C} = f_{\alpha_4|C}.
  \end{equation*}
  Since $c_{\alpha_4|C}$ is nontrivial, we can identify the extension $\V$
  with the Atiyah bundle $\F_2$. The composition $\F_2 \cong \V \to \OO_C$
  induces an isomorphism $H^1(C, \F_2) \to H^1(C, \OO_C)$.

  For $i=1,2,3$, let $\V_i$ be the vector bundle of rank $3$ over $C$
  associated with $\U_{\le 2}$ via
  \begin{equation*}
    p_i : U_{\le 2} \to U_{\GL_3} \subset \GL_3.
  \end{equation*}
  Using the above identifications, $\V_i$ is by construction an extension
  \begin{equation*}
    0 \to \F_2 \to \V_i \to \OO_C \to 0
  \end{equation*}
  whose class $c_{\alpha_i|C} \in H^1(C, \OO_C) = H^1(C, \F_2) =
  \Ext^1(\OO_C, \F_2)$ is given by
  \begin{equation*}
    e_{\alpha_i|C} \cdot c_i = c_{\alpha_i|C} = f_{\alpha_i|C}.
  \end{equation*}
  Applying $[e_{\alpha_j+\alpha_4},\_]$ to this equation, and using that
  \begin{align*}
    [e_{\alpha_j+\alpha_4},e_{\alpha_i}] &= e_{\alpha_i+\alpha_j+\alpha_4} =
    [e_{\alpha_i+\alpha_4},e_{\alpha_j}],\\
    [e_{\alpha_j+\alpha_4},f_{\alpha_i}] &= f_{\alpha_i+\alpha_j+\alpha_4} =
    [e_{\alpha_i+\alpha_4},f_{\alpha_j}]
  \end{align*}
  by Lemma~\ref{lem:e_alpha-f_alpha}, we conclude that
  \begin{equation}\label{eq:c_alpha_C}
    c_1=c_2=c_3 \in H^1(C,\OO_C).
  \end{equation}
  This allows us to identify
  $\V_1$, $\V_2$, $\V_3$ as extensions of $\OO_C$ by $\F_2$. This
  identification reduces $\U_{\le 2}$ to a torsor $\U_{\GL_3}$ under
  the diagonally embedded subgroup
  \begin{equation}\label{eq:U_diagonal}
    U_{\GL_3} \subset U_{\le 2}.
  \end{equation}
  Since the class in (\ref{eq:c_alpha_C}) is nontrivial, we note that
  $\V_i \cong \F_3$. 

  Similarly, $c_3=c_4 \in H^1(C,\OO_C)$ because
  \begin{align*}
    [e_{\alpha_4},e_{\alpha_3}] &= e_{\alpha_3+\alpha_4} = -[e_{\alpha_3},e_{\alpha_4}],\\
    [e_{\alpha_4},f_{\alpha_3}] &= f_{\alpha_3+\alpha_4} = -[e_{\alpha_3},f_{\alpha_4}].
  \end{align*}
  This allows us to identify the subbundle $\F_2 \cong \V \subset \V_3$ with
  the quotient $\V_3/\OO_C \cong \F_2$. This identification reduces
  $\U_{\GL_3}$ to a torsor $\U'$ under the subgroup
  \begin{equation*}
    U':=\left\{\left.
      \begin{pmatrix}
        1 & \lambda & \mu\\
        0 & 1 & \lambda\\
        0 & 0 & 1
      \end{pmatrix}
      \,\right|\, \lambda,\mu \in k\right\} \subset U_{\GL_3}.
  \end{equation*}

  The next step is to study $\fraku_{\le 3}$ as a ten-dimensional
  representation of these subgroups $U' \subset U_{\GL_3} \subset
  U_{\le 2}$. We have
  \begin{equation*}
    \fraku_{\le 3} = \bigoplus_{\alpha \in \Phi^+_{\le 3}} \frakg_\alpha,
  \end{equation*}
  with basis $(x_\alpha)$ as introduced above.

  For $\lambda \in k$, consider
  \begin{equation*}
    u_\lambda =
    \begin{pmatrix}
      1 & 0 & 0\\
      0 & 1 & \lambda\\
      0 & 0& 1
    \end{pmatrix}
    \in U_{\GL_3},\qquad
    v_\lambda =
    \begin{pmatrix}
      1 & \lambda & 0\\
      0 & 1 & 0\\
      0 & 0 & 1
    \end{pmatrix}
    \in U_{\GL_3}.
  \end{equation*}
  Its images under (\ref{eq:U_diagonal}) are
  \begin{equation*}
    \exp_{\alpha_1}(\lambda x_{\alpha_1})\exp_{\alpha_2}(\lambda
    x_{\alpha_2})\exp_{\alpha_3}(\lambda x_{\alpha_3}) \in U_{\le 2},\qquad \exp_{\alpha_4}(\lambda x_{\alpha_4}) \in U_{\le 2};
  \end{equation*}
  for the image of $u_\lambda$, the ordering does not matter since
  $[x_{\alpha_i},x_{\alpha_j}]=0$ for all $i,j \le 3$. For every
  $\alpha \in \Phi^+_{\le 3}$ and $y \in \frakg_\alpha$,
  Lemma~\ref{lem:exp_A2} then gives
  \begin{align*}
    u_\lambda \cdot y &= y + \sum_{i=1}^3 [\lambda x_{\alpha_i},y]+ 
    \sum_{1 \le i < j \le 3}[\lambda x_{\alpha_i},[\lambda x_{\alpha_j}, y]],\\
    v_\lambda \cdot y &= y + [\lambda x_{\alpha_4},y].
  \end{align*}
  Note that the last sum in the expression for $u_\lambda \cdot y$
  vanishes unless $\alpha = \alpha_4$.
  
  In particular, using the notation
  \begin{equation*}
    z_1:=\sum_{i=1}^3 x_{\alpha_i},\quad z_2:=\sum_{i=1}^3 x_{\alpha_i+\alpha_4}, 
    \quad z_3:=\sum_{1 \le i < j \le 3} x_{\alpha_i+\alpha_j+\alpha_4},
  \end{equation*}
  we have
\begin{align*}
    u_\lambda \cdot x_{\alpha_4} &= x_{\alpha_4} - \lambda z_2 + \lambda^2 z_3,\\
    u_\lambda \cdot x_{\alpha_j+\alpha_4} & = x_{\alpha_j+\alpha_4} - \lambda \sum_{i \in \{1,2,3\} \setminus \{j\}} x_{\alpha_i+\alpha_j+\alpha_4}\qquad  (j=1,2,3),\\
    v_\lambda \cdot x_{\alpha_j} &= x_{\alpha_j} + \lambda x_{\alpha_j+\alpha_4} \qquad (j=1,2,3),
  \end{align*}
  while $u_\lambda$ and $v_\lambda$ acts as the identity on all other
  $x_{\alpha}$. These imply that
  \begin{equation*}
    v_\lambda \cdot z_1 = z_1+\lambda z_2, \text{ and } 
    u_\lambda \cdot z_2 = z_2-2\lambda z_3 = z_2\text{ in characteristic $2$,}
  \end{equation*}
  while $u_\lambda$ and $v_\lambda$ act as the identity on all other
  $z_i$.  We observe that $\fraku_{\le 3}$ decomposes as
  $U_{\GL_3}$-module into the direct sum of the three vector spaces
  \begin{align*}
    \fraku_{\le 3}^1&:=\langle x_{\alpha_1}, x_{\alpha_1+\alpha_4}, x_{\alpha_1+\alpha_2+\alpha_4}+x_{\alpha_1+\alpha_3+\alpha_4}\rangle,\\
    \fraku_{\le 3}^2&:=\langle x_{\alpha_2}, x_{\alpha_2+\alpha_4}, x_{\alpha_1+\alpha_2+\alpha_4}+x_{\alpha_2+\alpha_3+\alpha_4}\rangle,\\
    \fraku_{\le 3}'&:=\langle x_{\alpha_4}, z_1, z_2, z_3\rangle.
  \end{align*}
  The vector bundle $\V^1$ of rank $3$ over $C$ associated with the
  $U_{\GL_3|C}$-torsor $\U_{\GL_3}$ via the representation
  $\fraku_{\le 3}^1$ is isomorphic to $\F_3$. Indeed, $\fraku_{\le
    3}^1$ has the composition series
  \begin{equation*}
    0 \subset \langle x_{\alpha_1+\alpha_2+\alpha_4}+x_{\alpha_1+\alpha_3+\alpha_4} \rangle \subset \langle x_{\alpha_1+\alpha_4}, x_{\alpha_1+\alpha_2+\alpha_4}+x_{\alpha_1+\alpha_3+\alpha_4}\rangle \subset \fraku_{\le
      3}^1.
  \end{equation*}
  The composition factors are the trivial one-dimensional
  representations.  Therefore, $\V^1$ is a double extension of
  $\OO_C$ by $\OO_C$ by $\OO_C$.  Here, both extensions of $\OO_C$ by
  $\OO_C$ are nontrivial because the corresponding representations
  $\langle x_{\alpha_1+\alpha_4},
  x_{\alpha_1+\alpha_2+\alpha_4}+x_{\alpha_1+\alpha_3+\alpha_4}\rangle$
  and $\fraku_{\le 3}^1/\langle
  x_{\alpha_1+\alpha_2+\alpha_4}+x_{\alpha_1+\alpha_3+\alpha_4}
  \rangle$ are the two two-dimensional standard representations of
  $U_{\GL_3}$. A similar argument shows that the vector bundle $\V^2$
  of rank $3$ over $C$ associated with the $U_{\GL_3|C}$-torsor
  $\U_{\GL_3}$ via the representation $\fraku_{\le 3}^2$ is isomorphic
  to $\F_3$.

  The subgroup $U' \subset U_{\GL_3}$ is generated by
  \begin{equation*}
    u_\lambda v_\lambda =
    \begin{pmatrix}
      1 & \lambda & 0\\
      0 & 1 & \lambda\\
      0 & 0& 1
    \end{pmatrix}
    \in U',\qquad
    [v_1,u_\mu] =
    \begin{pmatrix}
      1 & 0 & \mu\\
      0 & 1 & 0\\
      0 & 0 & 1
    \end{pmatrix}
    \in U'
  \end{equation*}
  for $\lambda,\mu \in k$. We have
  \begin{align*}
    u_\lambda v_\lambda\cdot x_{\alpha_4} &= x_{\alpha_4} - \lambda z_2 + \lambda^2z_3,\\
    u_\lambda v_\lambda \cdot z_1 &= z_1+\lambda z_2,\\
    u_\lambda v_\lambda \cdot z_2 &= z_2,\\
    u_\lambda v_\lambda \cdot z_3 &= z_3,
  \end{align*}
  while $[v_1,u_\mu]$ acts as the identity on $\fraku_{\le 3}'$. Hence
  $\fraku_{\le 3}'$ decomposes as $U'$-module into the direct sum of
  the two vector spaces
  \begin{align*}
    \fraku_{\le 3}^3 &:= \langle z_1, z_2\rangle,\\
    \fraku_{\le 3}^4 &:= \langle x_{\alpha_4}+z_1, z_3\rangle.
  \end{align*}
  The vector bundle $\V^3$ of rank $2$ over $C$ associated with the
  $U'$-torsor $\U'$ via $\fraku_{\le 3}^3$ is isomorphic to $\F_2$
  because $\fraku_{\le 3}^3$ is isomorphic to the two-dimensional
  standard representation of $U'$. The vector bundle $\V^4$ of rank
  $2$ over $C$ associated with the $U'$-torsor $\U'$ via $\fraku_{\le
    3}^4$ is isomorphic to the Frobenius pullback of $\F_2$. Indeed,
  $\fraku_{\le 3}^4$ is isomorphic to the Frobenius pullback of the
  two-dimensional standard representation of $U'$ since $u_\lambda
  v_\lambda$ acts on $\fraku_{\le 3}^4$ as the matrix 
  $\left(\begin{smallmatrix}1 & 0 \\ \lambda^2 & 1 \end{smallmatrix}\right)$.
\end{proof}

\begin{cor}\label{cor:D4_ad_n_C}
  For $i=0,1$, we have
  \begin{equation*}
    h^i(C, \ad_n(\B_k)_{|C}) =
    \begin{cases}
      4 &\text{if $C$ is ordinary},\\
      5 &\text{if $C$ is supersingular}.
    \end{cases}
  \end{equation*}
\end{cor}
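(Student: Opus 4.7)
The plan is to read the cohomology directly off the explicit decomposition
\[
\ad_n(\B_k)_{|C} \cong \F_3^{\oplus 2} \oplus \F_2 \oplus \Frob^*\F_2
\]
given by Proposition~\ref{prop:D4_ad_C}, treating each indecomposable summand separately and using additivity of cohomology over direct sums.

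For an Atiyah bundle $\F_r$ of rank $r$ and degree $0$ on the elliptic curve $C$, Atiyah's classification gives $h^0(C,\F_r)=1$. Combining $\omega_C \cong \OO_C$ with the self-duality $\F_r^\vee \cong \F_r$ (the dual is again indecomposable of rank $r$ and degree $0$, hence isomorphic to $\F_r$ by uniqueness), Serre duality on $C$ yields $h^1(C,\F_r)=1$ as well. So each of the summands $\F_3$, $\F_3$, $\F_2$ contributes $(1,1)$ to $(h^0,h^1)$, accounting for $3$ in each cohomological degree, independently of whether $C$ is ordinary or supersingular.

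The heart of the argument is the analysis of $\Frob^*\F_2$. By construction, $\F_2$ is the unique nontrivial extension
\[
0 \to \OO_C \to \F_2 \to \OO_C \to 0,
\]
whose class $c$ is a generator of $H^1(C, \OO_C) \cong k$. Hence $\Frob^*\F_2$ is the extension with class $\Frob^*(c) \in H^1(C, \OO_C)$, where $\Frob^*$ denotes the action of absolute Frobenius on coherent cohomology, i.e., the Hasse--Witt operator. By the classical dichotomy, $\Frob^*$ is bijective on $H^1(C, \OO_C)$ exactly when $C$ is ordinary, and vanishes identically exactly when $C$ is supersingular. Therefore, if $C$ is ordinary then $\Frob^*(c) \ne 0$, so $\Frob^*\F_2$ is again a nontrivial such extension and must equal $\F_2$, contributing $(1,1)$; while if $C$ is supersingular then $\Frob^*(c) = 0$, the extension splits, and $\Frob^*\F_2 \cong \OO_C^{\oplus 2}$, contributing $(2,2)$. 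Adding the contributions from all four summands then gives $h^i = 3+1=4$ in the ordinary case and $h^i = 3+2=5$ in the supersingular case, as claimed.

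The main obstacle is the identification of $\Frob^*\F_2$, namely the translation of the absolute-Frobenius pullback of the Atiyah extension into the Hasse--Witt dichotomy on $H^1(C,\OO_C)$. This is classical, and the relevant dichotomy for $C$ is already in place via the explicit cubic equations \eqref{eq:D4_cubic_ordinary} and \eqref{eq:D4_cubic_supersingular} and the reference to \cite[Proposition~IV.4.21]{MR0463157}.
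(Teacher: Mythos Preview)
Your argument computes the cohomology of the wrong bundle. Proposition~\ref{prop:D4_ad_C} gives the decomposition
\[
\ad_n(\B_{\le 3,k})_{|C} \;\cong\; \F_3^{\oplus 2} \oplus \F_2 \oplus \Frob^*\F_2,
\]
which is a rank-$10$ bundle, whereas the corollary is about $\ad_n(\B_k)_{|C}$, which has rank $12$ (the number of positive roots of $\mathbf{D}_4$). You have silently replaced $\B_{\le 3,k}$ by $\B_k$ in the statement of Proposition~\ref{prop:D4_ad_C}; these are different, since there remain positive roots of height $4$ and $5$.

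What you have actually proved is the intermediate equation~\eqref{eq:D4_ad_n_C_3} in the paper's proof, namely the values of $h^i(C,\ad_n(\B_{\le 3,k})_{|C})$, and your treatment of $\Frob^*\F_2$ via the Hasse--Witt operator is the same as the paper's. The missing step is the passage from $n=3$ to the full $\B_k$: one uses the exact sequences~\eqref{eq:ad_sequence} for $n\ge 4$ and observes that, although Lemma~\ref{lem:cup_epi} fails for $n=3$ in type $\mathbf{D}_4$ over a field of characteristic $2$, it \emph{does} hold for $n\ge 4$ (a direct check on the two remaining roots). Surjectivity of the connecting homomorphism~\eqref{eq:ad_sequence_connecting} then forces $h^1(C,\ad_n(\B_{\le n,k})_{|C})=h^1(C,\ad_n(\B_{\le n-1,k})_{|C})$ for $n=4,5$, and the $i=0$ statement follows from Riemann--Roch on $C$.
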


\begin{proof}
  If $C$ is ordinary, then the endomorphism of $H^1(C,\OO_C)$ induced by
  Frobenius is nonzero, and therefore, $\Frob^*\F_2 \cong \F_2$. Otherwise,
  $C$ is supersingular, so $\Frob^*\F_2 \cong \OO_C^2$.  Using
  Proposition~\ref{prop:D4_ad_C} and $h^i(C,\F_r) = 1$ for $i=0,1$ and all
  $r \ge 1$, we conclude that
  \begin{equation}\label{eq:D4_ad_n_C_3}
    h^i(C, \ad_n(\B_{\le 3,k})_{|C}) =
    \begin{cases}
      4 &\text{if $C$ is ordinary},\\
      5 &\text{if $C$ is supersingular}.
    \end{cases}
  \end{equation}
  
  Now consider the long exact cohomology sequence associated with
  (\ref{eq:ad_sequence}) for $n \ge 4$. Its connecting homomorphism
  (\ref{eq:ad_sequence_connecting}) is surjective since
  Lemma~\ref{lem:cup_epi} is again valid for $n \ge 4$ in the
  $\mathbf{D}_4$-case in characteristic $2$. Therefore,
  \begin{equation*}
    h^1(C,\ad_n(\B_{\le n,k})_{|C}) = h^1(C,\ad_n(\B_{\le n-1,k})_{|C}).
  \end{equation*}
  By induction starting with (\ref{eq:D4_ad_n_C_3}), we
  obtain the result for $i=1$. The result for $i=0$ follows since the
  Euler characteristic of $\ad_n(\B_k)_{|C}$ vanishes.
\end{proof}

\begin{proof}[Proof of Proposition~\ref{prop:D4_ad}]
  By Corollary~\ref{cor:D4_ad_n_C}, Lemma~\ref{lem:S_to_C}, and the
  discussion of elliptic curves on the two isomorphism classes of
  singular cubic surfaces, we have
  \begin{equation*}
    h^i(\tS_k, \ad_n(\B_k)) =
    \begin{cases}
      4 &\text{for $S_k$ as in~(\ref{eq:D4_cubic_ordinary})},\\
      5 &\text{for $S_k$ as in~(\ref{eq:D4_cubic_supersingular})}
    \end{cases}
  \end{equation*}
  for $i=0,1$. By (\ref{eq:ad_n_to_ad}) for sufficiently large $n$, it
  suffices to prove that the connecting homomorphism
  \begin{equation}\label{eq:D4_connecting}
    H^0(\tS_k, \Lambda^\vee \otimes_\ZZ \OO_{\tS_k}) \to H^1(\tS_k, \ad_n(\B_k))
  \end{equation}
  has rank $4$. Indeed, this rank is at least $4$ since the
  composition of~(\ref{eq:D4_connecting}) with the natural map
  \begin{equation*}
    H^1(\tS_k,\ad_n(\B_k)) \to H^1(\tS_k,\ad_n(\B_{\le 1,k})) \cong 
    \bigoplus_{\alpha \in \Delta} H^1(\tS_k, L_{\alpha,k}) \cong \bigoplus_{\alpha \in \Delta} k \cong k^4
  \end{equation*}
  is the surjective connecting homomorphism $\delta$
  from~(\ref{eq:H^1(ad_B1)}). On the other hand, the rank
  of~(\ref{eq:D4_connecting}) is at most $4$ since this map factors
  through the projection
  \begin{equation*}
    H^0(\tS_k, \Lambda^\vee \otimes_\ZZ \OO_{\tS_k}) \to 
    \bigoplus_{\alpha\in\Delta} H^0(\tS_k, \OO_{\tS_k}) \cong 
    \bigoplus_{\alpha \in \Delta} k \cong k^4
  \end{equation*}
  given by the simple roots $\alpha$.
\end{proof}

\begin{cor}\label{cor:d4_ad_B_G_G'}
  The $R$-modules $H^1(\tS, \ad(\B))$, $H^1(\tS, \ad(\G))$, $H^1(S, \ad(\G'))$ are all
  zero if $\ell_1 \cap \ell_2 \cap \ell_3 = \emptyset$ on $S_k$, and are all
  nonzero otherwise.
\end{cor}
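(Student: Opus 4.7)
The plan is to reduce all three cohomology groups to $H^1(\tS_k, \ad(\B_k))$, which is computed in Proposition~\ref{prop:D4_ad}. First, I would establish the identifications
\begin{equation*}
H^1(S, \ad(\G')) \cong H^1(\tS, \ad(\G)) \cong H^1(\tS, \ad(\B))
\end{equation*}
by reusing the natural isomorphisms \eqref{eq:isom_ad_G_ad_G'} from the proof of Theorem~\ref{thm:main}(ii) and \eqref{eq:isom_ad_B_ad_G} from the proof of Corollary~\ref{cor:G-torsor_unique_rigid}. A careful inspection shows that neither isomorphism itself requires very good characteristic: the former only uses $R^i\phi_*\OO_\tS = 0$ for $i > 0$, supplied by Proposition~\ref{prop:O_tS_O_S}, while the latter only uses the vanishings $H^0(\tS, L_{-\alpha}) = H^1(\tS, L_{-\alpha}) = 0$ for $\alpha \in \Phi^+$, which follow from Riemann--Roch and Serre duality on the fibers together with flat base change.

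Next I would connect $H^1(\tS, \ad(\B))$ to $H^1(\tS_k, \ad(\B_k))$ via cohomology and base change \cite[Theorem~III.12.11]{MR0463157}. The vector bundle $\ad(\B)$ admits a filtration whose graded pieces are $\OO_\tS$ and the line bundles $L_\alpha$ for $\alpha \in \Phi^+$. By Lemma~\ref{lem:h1} and the rationality of $\tS_k$, each graded piece has vanishing $H^2$ on $\tS_k$, hence $H^2(\tS_k, \ad(\B_k)) = 0$. Since $R^3 f_* = 0$ for dimension reasons, part (a) of the base change theorem forces $H^2(\tS, \ad(\B)) \otimes_R k = 0$, so Nakayama's lemma applied to this finitely generated $R$-module gives $H^2(\tS, \ad(\B)) = 0$. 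With $R^2 f_* \ad(\B) = 0$ trivially locally free, part (b) of the theorem then produces a surjection
\begin{equation*}
\varphi^1 \colon H^1(\tS, \ad(\B)) \otimes_R k \twoheadrightarrow H^1(\tS_k, \ad(\B_k)).
\end{equation*}

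To conclude, I would split into the two cases of Proposition~\ref{prop:D4_ad}. If $\ell_1 \cap \ell_2 \cap \ell_3 = \emptyset$, then $H^1(\tS_k, \ad(\B_k)) = 0$, so $H^1(\tS, \ad(\B)) \otimes_R k = 0$ and Nakayama yields $H^1(\tS, \ad(\B)) = 0$; the two isomorphisms of the first step then give the vanishings of the other two groups. In the opposite case, $H^1(\tS_k, \ad(\B_k)) \neq 0$ forces $H^1(\tS, \ad(\B)) \neq 0$ via the surjection $\varphi^1$, and nonvanishing transports along the isomorphisms. The main obstacle is really the first step: one has to examine the proofs of \eqref{eq:isom_ad_G_ad_G'} and \eqref{eq:isom_ad_B_ad_G} closely to confirm that they do not secretly invoke Proposition~\ref{prop:B-torsor_unique_rigid} or the vanishing conclusions that genuinely require very good characteristic, only the underlying exact sequences and base change arguments.
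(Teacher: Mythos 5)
Your proposal is correct and follows essentially the same route as the paper: reduce to $H^1(\tS_k,\ad(\B_k))$ via Cohomology and Base Change using $H^2(\tS_k,\ad(\B_k))=0$, invoke Proposition~\ref{prop:D4_ad}, and transfer the (non)vanishing through the isomorphisms \eqref{eq:isom_ad_B_ad_G} and \eqref{eq:isom_ad_G_ad_G'}. Your explicit check that those two isomorphisms do not depend on the very good characteristic hypothesis is a point the paper leaves implicit, and your surjection $\varphi^1$ (the paper upgrades it to an isomorphism) is already sufficient for both directions of the conclusion.
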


\begin{proof}
  Since $H^2(\tS_k, \ad(\B_k))=0$ because of
  (\ref{eq:Hi_Lalpha_over_k}), Cohomology and Base Change
  \cite[Theorem~III.12.11]{MR0463157} implies that the natural map
  \begin{equation*}
    H^1(\tS, \ad(\B)) \otimes_R k \to H^1(\tS_k, \ad(\B_k))
  \end{equation*}
  is an isomorphism. Using Proposition~\ref{prop:D4_ad}, we conclude
  that $H^1(\tS, \ad(\B))$ vanishes if and only if $\ell_1 \cap \ell_2
  \cap \ell_3 = \emptyset$.  The isomorphisms
  (\ref{eq:isom_ad_B_ad_G}) and (\ref{eq:isom_ad_G_ad_G'}) give the
  remaining statements for $H^1(\tS, \ad(\G))$ and $H^1(S, \ad(\G'))$.
\end{proof}

\bibliographystyle{plain}

\bibliography{bundles}

\end{document}